\newcommand{\scal}{\textup{scal}}
\newcommand{\vol}{\textup{vol}}
\newcommand{\ginit}{g_{\, \textup{init}}}
\newcommand{\U}{\mathscr{U}}
\newcommand{\vep}{\varepsilon}
\newcommand{\ubar}{\bar{u}}
\newcommand{\vl}{\textup{vol}}
\newcommand{\xt}{\widetilde{x}}
\newcommand{\dvinit}{dv_{\ginit}}
\newcommand{\bR}{\mathbb{R}}
\newcommand{\bZ}{\mathbb{Z}}
\newcommand{\inn}[2]{ \left\langle #1, #2 \right\rangle}
\newcommand{\cS}{\mathfrak{s}} 
\DeclareMathOperator{\hhok}{\mathcal{C}^{2k+\A}_{\textup{ie}}}
\DeclareMathOperator{\hhokk}{\mathcal{C}^{2(k+1)+\A}_{\textup{ie}}}
\DeclareMathOperator{\hospace}{\mathcal{C}^{\A}_{\textup{ie}}}
\DeclareMathOperator{\hhospace}{\mathcal{C}^{2+\A}_{\textup{ie}}}
\DeclareMathOperator{\ho}{\mathcal{C}^{\A}_{\textup{ie}}}
\DeclareMathOperator{\hoone}{\mathcal{C}^{1+\A}_{\textup{ie}}}
\DeclareMathOperator{\hoonek}{\mathcal{C}^{2k + 1+\A}_{\textup{ie}}}
\DeclareMathOperator{\hho}{\mathcal{C}^{2+\A}_{\textup{ie}}}
\DeclareMathOperator{\hhh}{\mathcal{C}^{4+\A}_{\textup{ie}}}
\DeclareMathOperator{\hhhs}{\mathcal{C}^{4+\sigma}_{\textup{ie}}}
\DeclareMathOperator{\hhod}{\mathcal{C}^{2+\A}_{\textup{ie}, 0}}
\numberwithin{equation}{section}
\begin{document}


\title[Long-time existence of the edge Yamabe flow]
{Long-time existence of the edge Yamabe flow}

\author{Eric Bahuaud}
\address{Department of Mathematics,
Seattle University,
Seattle, WA 98122,
USA}

\email{bahuaude (at) seattleu (dot) edu}

\author{Boris Vertman}
\address{Mathematisches Institut,
Universit\"at M\"unster,
48149 M\"unster,
Germany}
\email{vertman (at) uni-muenster (dot) de}

\subjclass[2000]{53C44; 58J35; 35K08}
\date{\today}

\begin{abstract}
{This article presents an analysis of the normalized Yamabe flow starting at 
and preserving a class of compact Riemannian manifolds with incomplete edge singularities
and negative Yamabe invariant. Our main results include uniqueness, long-time existence and convergence of the edge Yamabe flow starting at a metric with everywhere negative scalar curvature. Our methods include novel maximum principle results on the singular edge space without using barrier functions. Moreover, our uniform bounds on solutions
are established by a new ansatz without in any way using or redeveloping Krylov-Safonov estimates in the singular setting. 
As an application we obtain a solution to the Yamabe problem for incomplete edge metrics with negative Yamabe invariant using flow techniques. Our methods lay groundwork for studying other flows like the mean curvature flow as well as the porous medium equation in the singular setting. }
\end{abstract}

\maketitle
\tableofcontents

\section{Introduction and statement of the main result}\label{intro}

The normalized Yamabe flow (NYF) is a geometric evolution equation that evolves a Riemannian metric toward a metric of constant scalar curvature.  In the classical setting, let $(M,g)$ be a compact Riemannian manifold of dimension $m \geq 3$, let $\scal(g)$ and $\vol(g)$ denote the scalar curvature and volume respectively, and let $\rho(g) = \textup{vol}(g(t))^{-1} \int_M \scal(g(t)) \textup{dvol}_{g(t)}$ denote the average scalar curvature.  Then the volume normalized Yamabe flow starting at a metric $\ginit$ is 
\begin{equation}\label{NYF}  
\partial_{t} g(t) = \left(\frac{}{}\rho(t) - \scal(g(t))\right) \cdot g(t), \ g(0) = \ginit.\end{equation}

On a compact manifold, the behavior of this flow is rather well understood.  For any choice of initial metric, the flow exists for all time -- this fact was proved by Hamilton, who also introduced the Yamabe flow.  Ye proved convergence of the flow in the scalar negative, scalar flat and locally conformally flat scalar positive cases \cite{Ye}.  Building on work of Schwetlick and Struwe \cite{SS}, Brendle \cite{BrendleYF} was able to prove convergence in all cases for dimension $3 \leq m \leq 5$, and with technical assumptions on the Weyl curvature in dimensions $m \geq 6$.  See the survey \cite{Brendle} for further details. \medskip

The Yamabe flow equation preserves the conformal class of the initial metric $\ginit$ and thus may be written as a single nonlinear PDE for the conformal factor.  Further, the flow preserves the volume and decreases the average scalar curvature functional $\rho(t)$.  On a compact manifold, short-time existence is a consequence of a contraction mapping argument that uses classical parabolic Schauder estimates (see for example \cite{LSU:LAQ}). As explained in \cite{Ye}, long-time existence and convergence of the Yamabe flow is easiest in the case $\scal(\ginit)<0$.  Briefly, if there is a maximal finite time of existence, then the maximum principle may be used (this is where the sign condition $\scal(\ginit)<0$ arises) to obtain a uniform $L^{\infty}$ estimate for the conformal factor up to the maximal time.  A regularity result by Krylov-Safonov \cite{KrylovSafonov} then allows us to improve this to a H\"older bound, and parabolic regularity implies that the conformal factor then extends to the maximal time where the flow may be restarted.  Thus the flow exists for all time, and convergence rapidly follows from an analysis of the average and total scalar curvature functionals.
\medskip

In this paper we continue the investigation of the Yamabe flow that we commenced in \cite{BV} where we established short-time existence for the Yamabe flow within a specified class of compact Riemannian spaces with incomplete edge metrics.  Preservation of the edge structure along the flow can roughly be thought of as a boundary condition.  We present a subsequent study of uniqueness and long-time existence for this edge Yamabe flow in the case of initial edge metric with negative scalar curvature.  Needless to say in a singular setting none of the analysis described in the compact setting can be taken for granted.  Classical parabolic Schauder estimates do not directly carry over to the singular setting and even short-time existence of the Yamabe flow was non-trivial. Moreover, the maximum principle needs to be suitably modified to incorporate the singular edge boundary, and we had to develop a suitable replacement for the Krylov-Safonov estimate.\medskip  

The incomplete edge metrics of interest are certain perturbations of rigid edge metrics.  Given a compact manifold with boundary $(\widetilde{M},\partial M)$, let $x$ be a simple defining function for the boundary, and assume the boundary is a fibration with base $(B,g^B)$ and fibre $(F,g^F)$ so that $\phi: (\partial M, g^F + \phi^* g^B) \to (B,g^B)$ is a Riemannian submersion. 
We assume $\dim F \geq 1$ throughout the paper. We write $M:=\widetilde{M} \backslash \partial M$ for the open interior of $\widetilde{M}$.
The rigid edge metrics on $M$ under consideration here are then expressible in a neighborhood 
of $\partial M$ in the form
\[ g_0 = dx^2 + x^2 g^F + \phi^* g^B. \]
The metrics we work with, which we call \textit{feasible}, are then higher order perturbations of rigid metrics satisfying certain additional conditions. These higher order perturbations are smooth on $\widetilde{M}$ and vanishing at 
$\partial M$. See the next section for more details.  \medskip

The key point of the feasibility requirements we impose above is that we may obtain the asymptotics of the heat kernel of the Friedrichs extension of the Laplace Beltrami operator $\Delta^{g}$, and thus study its mapping properties between functions spaces.  In fact, parallel with the recent developments for other geometric problems on singular spaces, we have introduced hybrid incomplete edge H\"older spaces that only keep track of certain derivatives.  Let $\hospace$ be the space of functions with finite (parabolic) H\"older norm, where distance on $M$ is measured with respect to $g$ (see Definition \ref{holder-def} below).  The basic space on which we prove existence of the Yamabe flow is $\hho$ where
\[ \hho(M \times [0,T]) = \{ u \in \hospace(M \times [0,T]) \; | \;\Delta u, x^{-1} \mathcal{V}_e u, \partial_t u \in \hospace \}, \]
with an appropriate norm, where $\mathcal{V}_e$ denotes first order edge vector fields.  In other words, our second-order hybrid spaces only keep track of $\Delta u$ instead of the full Hessian $\nabla^2 u$.  This is related to the failure of elliptic regularity in 
H\"older spaces which control the full Hessian in this singular setting. See \cite{MRS} for an excellent discussion of these issues.
\medskip

\subsection{Statement of the main results}
Let us now state our main results.   First, we point out a new version of a maximum principle 
on singular spaces which specifies regularity of a function instead of reducing the statement
to the classical setting by a smart choice of barrier functions. We prove the following.

\begin{theorem} Let $g$ be a feasible incomplete edge metric with Laplacian $\Delta$.  Suppose $u \in \hho$ attains its minimum (respectively maximum) at 
some point $p\in \widetilde{M}$. In particular, we do not require $p$ to lie in the interior of the edge space. 
Then $(\Delta u)(p) \geq 0$ $($respectively $(\Delta u)(p) \leq 0 )$.
\end{theorem}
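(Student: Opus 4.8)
The plan is to split into two cases according to whether the minimizing point $p$ lies in the open interior $M$ or on the singular boundary $\partial M$. The interior case is classical: in $g$-geodesic normal coordinates centered at $p$ the first derivatives of $u$ vanish, the Hessian $\nabla^2 u(p)$ is positive semidefinite because $p$ is a minimum, and $\Delta u(p)$ is precisely its metric trace, hence $\geq 0$. All the work is therefore concentrated at a boundary point $p = (0,y_0,z_0) \in \partial M$, written in local fibered coordinates $(x,y,z)$ in which the rigid model reads $g_0 = dx^2 + x^2 g^F + \phi^*g^B$ and
\[
\Delta_{g_0} = \partial_x^2 + \frac{f}{x}\,\partial_x + \frac{1}{x^2}\Delta_{g^F} + \Delta_{\phi^*g^B}, \qquad f = \dim F.
\]

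The first and conceptually central step is to exploit the regularity $u,\Delta u \in \hospace$ together with the geometry of the incomplete edge distance. Since the fibres $F$ collapse to points as $x \to 0$, any two boundary points $(0,y_0,z)$ and $(0,y_0,z')$ have vanishing $g$-distance, so every function in $\hospace$ is necessarily \emph{fibre constant} along $\partial M$. Applied to $u$ this shows that $u|_{\partial M}$ descends to a function of the base variable alone; combined with the minimum property $u \geq u(p)$ and $u(p)=u(0,y_0,z_0)$ it yields $u(0,y_0,\cdot)\equiv u(p)$. Applied to $\Delta u$ it shows that $(\Delta u)(0,y_0,\cdot)$ is constant, so it coincides with its fibre average. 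I would therefore pass to the fibre average $\bar u(x,y) = \mathrm{vol}(F)^{-1}\int_F u(x,y,\cdot)\,dv_{g^F}$, whose decisive feature is that the singular fibre term disappears: $\int_F \Delta_{g^F} u\, dv_{g^F}=0$ on the closed fibre, so $\overline{\Delta_{g_0} u} = \partial_x^2 \bar u + \tfrac{f}{x}\partial_x\bar u + \Delta_{g^B}\bar u$, and $\bar u$ attains its minimum, equal to $u(p)$, at the boundary point $(0,y_0)$.

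The reduced operator $\partial_x^2 + \frac{f}{x}\partial_x + \Delta_{g^B}$ is exactly the Laplacian of $\mathbb{R}^{f+1}\times B$ acting on functions depending only on the radial variable $x=|w|$ and on $y\in B$. The second step is thus to \emph{unfold the edge into an interior point}: I would lift $\bar u$ to $U(w,y) := \bar u(|w|,y)$ on a neighbourhood of $\{0\}\times B \subset \mathbb{R}^{f+1}\times B$, where the origin of $\mathbb{R}^{f+1}$ is an honest interior point. Then $U$ has an interior minimum at $(0,y_0)$, and once $U$ is known to be $C^2$ there the classical second-derivative test gives $(\Delta_{\mathbb{R}^{f+1}\times B}U)(0,y_0)\geq 0$. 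Tracing the identifications back, this quantity is the boundary limit of $\overline{\Delta_{g_0}u}$, which by fibre constancy equals $(\Delta u)(p)$. The singular first-order term is handled along the way by combining the one-sided minimality $\partial_x\bar u(0,y_0)\geq 0$ with the requirement that $\tfrac{f}{x}\partial_x\bar u$ remain bounded as dictated by $\Delta u \in \hospace$, which together pin down $\partial_x\bar u(0,y_0)=0$; this is both the radial criticality needed for the lift to be $C^2$ and what makes $\tfrac{f}{x}\partial_x\bar u$ converge rather than blow up.

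The main obstacle is precisely the regularity input of the second step: one must show that membership $u \in \hho$ guarantees enough smoothness of $\bar u$ at $x=0$ for the lift $U$ to be twice differentiable at the center, i.e. that the fibre-constant leading behaviour together with the control on $x^{-1}\mathcal{V}_e u$ and $\Delta u$ forces the even, $\partial_x\bar u(0,y_0)=0$ radial expansion of a smooth rotationally symmetric function. This is where the heat-kernel asymptotics for the Friedrichs Laplacian guaranteed by feasibility enter, and it is exactly the step that replaces the construction of barrier functions. A final routine point is that for a genuinely feasible (non-rigid) metric the Laplacian differs from $\Delta_{g_0}$ only by terms whose coefficients vanish at $\partial M$; these contribute nothing in the limit $x\to 0$, so the sign computed from the rigid model is the correct value of $(\Delta u)(p)$. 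The maximum statement then follows by applying the minimum case to $-u$.
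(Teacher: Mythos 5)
Your reduction to the fibre average and the unfolding of the cone tip into an interior point of $\mathbb{R}^{f+1}\times B$ is a genuinely different strategy from the paper's, but it has a gap at exactly the point you yourself flag as ``the main obstacle,'' and the justifications you sketch for it do not hold up. The classical second-derivative test requires the lift $U(w,y)=\bar u(|w|,y)$ to be twice differentiable at $w=0$, and nothing in the definition of $\hho$ gives this: membership in $\hho$ controls the \emph{combination} $\Delta u$ (together with $x^{-1}\mathcal{V}_e u$), never individual second derivatives --- this is precisely the hybrid nature of these spaces emphasized in the paper's introduction. In particular, your claim that $\tfrac{f}{x}\partial_x\bar u$ must ``remain bounded as dictated by $\Delta u \in \hospace$'' is unjustified: boundedness of the sum $\partial_x^2\bar u+\tfrac{f}{x}\partial_x\bar u+\Delta_{g^B}\bar u$ does not bound the singular term by itself, so you cannot conclude $\partial_x\bar u(0,y_0)=0$ this way, and the appeal to heat-kernel asymptotics is left entirely unexecuted. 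The gap is in fact fillable --- for instance, $U$ is bounded and solves $\Delta U = F$ away from the set $\{0\}\times B$, which has codimension $f+1\geq 2$, where $F$ is the (H\"older) radial lift of $\overline{\Delta u}$; a removable-singularity theorem then gives the equation distributionally across the tip, and interior Schauder theory yields $U\in C^{2,\alpha}$ near $w=0$ --- but some such argument must actually be supplied, since it is the entire content of the boundary case. A second, related soft spot is your final ``routine point'' for non-rigid feasible metrics: the perturbation terms in $\Delta_g-\Delta_{g_0}$ involve $O(1)$ multiples of individual second edge derivatives such as $\partial_z^2 u$, which again are not controlled by the $\hho$ norm, so they do not obviously ``contribute nothing in the limit.''

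It is worth seeing how the paper's proof is engineered to avoid ever needing second-order regularity at the singular point. It argues by contradiction: if $(\Delta u)(p)<0$, then $\Delta u<0$ on $\{x\leq\varepsilon\}$ by continuity; since $\hho$ sits inside the Friedrichs domain, Green's formula on $\{x\leq\varepsilon\}$ produces no boundary contribution at $x=0$ and yields $\int_{A_\varepsilon}\partial_x u\, dA_\varepsilon<0$; hence the cross-sectional average $\bar u(\varepsilon)$ is strictly decreasing for small $\varepsilon>0$, contradicting $\bar u(\varepsilon)\geq u(p)=\lim_{\varepsilon\to 0^+}\bar u(\varepsilon)$. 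This uses only first derivatives in the region $x>0$, continuity at $x=0$, and the $L^2$ structure (the non-rigid correction enters only through the volume form, handled by dominated convergence); the general edge is then reduced to the conic case by polar coordinates in $(x,y)$, just as in your last step. If you wish to keep your unfolding argument, you must supply the removable-singularity/Schauder step (or carry out the resolvent or heat-kernel expansion you allude to); otherwise the integration-by-parts route is both shorter and demands strictly less regularity than your approach.
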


A direct consequence of this result are the various versions of a parabolic maximum principle,
which we use decisively at the various steps in the argument. Before stating our subsequent results, we recall an important ingredient from the resolution of the Yamabe problem.  Recall on a compact manifold of dimension at least $3$
the (normalized) total scalar curvature function is defined as 
\[ \cS(g) := \frac{1}{\vol(g)^{\frac{m-2}{m}}} \int_M \scal(g) d\vol. \]
The Yamabe invariant is then defined as
\[ \mathcal{Y}( [g] ) = \mathcal{Y}( u^{\frac{4}{m-2}} g) = \inf \left\{  \cS( u^{\frac{4}{m-2}}g ): u \in H^{1,2}(M), u > 0 \right\}. \]
We thus introduce an invariant adapted to our edge setting
\[ \nu([g]) = \inf\left\{  \cS( u^{\frac{4}{m-2}} g ): u \in \hho(M) , u > 0 \right\}. \]
Like the case of compact manifolds, the sign of $\nu$ plays an important role in the analysis.  We say that a conformal class $[g]$ is scalar positive, negative or zero if $\nu([g])$ is positive, negative or zero, respectively.  We prove the following singular analogue of a classical result.
\begin{thm} \label{negative}
Let $g$ be a feasible incomplete edge metric with conformal Laplacian 
$\Box^g u :=  - \frac{4(m-1)}{(m-2)} \, \Delta^g u+  \scal (g) u$,
and $\scal(g) \in \hhh(M)$.  The following are equivalent.
\begin{enumerate}
\item The first eigenvalue of $\Box^g$ is positive (respectively negative or zero).
\item There exists a metric $\widetilde{g} = u^{\frac{4}{m-2}} g$ such that $\scal(\widetilde{g}) > 0$\\
 (respectively $<0$ or $=0$).
\item $[g]$ is scalar positive (respectively scalar negative or scalar zero).
\end{enumerate}
\end{thm}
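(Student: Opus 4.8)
The strategy is to reduce all three assertions to the sign of the bottom eigenvalue $\lambda_1$ of the Friedrichs extension of $\Box^g$. The linchpin is the classical pointwise conformal law
\[
\Box^g u = \scal(\widetilde g)\, u^{\frac{m+2}{m-2}}, \qquad \widetilde g = u^{\frac{4}{m-2}} g,
\]
which is valid on the smooth open interior $M$; the hypothesis $\scal(g)\in\hhh(M)$ ensures that $\Box^g u$ is regular enough for the identity to persist on all of $\hho$. Since $u>0$, the sign of $\scal(\widetilde g)$ coincides pointwise with that of $\Box^g u$, which is precisely what links assertion (2) to the operator $\Box^g$.

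First I would set up the variational picture. Pairing $\Box^g u$ with $u$ and integrating by parts — legitimate because feasibility of $g$ together with the heat-kernel mapping properties make the Friedrichs realization of $\Box^g$ self-adjoint with no edge boundary contribution — yields, for positive $u \in \hho$,
\[
\cS\!\big(u^{\frac{4}{m-2}} g\big) = \frac{Q(u)}{\|u\|_{\frac{2m}{m-2}}^{2}}, \qquad Q(u) := \frac{4(m-1)}{m-2}\int_M |\nabla u|^2\, d\vol_g + \int_M \scal(g)\, u^2\, d\vol_g .
\]
Thus $\nu([g]) = \inf_{u>0} Q(u)/\|u\|_{2m/(m-2)}^2$, whereas $\lambda_1 = \inf_{u\neq 0} Q(u)/\|u\|_2^2$ uses the same numerator against the $L^2$ norm; both are questions about the sign of the quadratic form $Q$. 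Next I would invoke the spectral theory of $\Box^g$ to show that $\lambda_1$ is attained by an eigenfunction $\varphi \in \hho$ which may be taken nonnegative (replacing $\varphi$ by $|\varphi|$ does not increase $Q$), and then \emph{strictly} positive. Here the maximum principle stated above enters decisively: were $\varphi \geq 0$ to vanish at a point $p \in \widetilde M$ — necessarily a minimum — the theorem would force $(\Delta \varphi)(p) \geq 0$, hence $(\Box^g \varphi)(p) \leq 0$; combined with $\Box^g \varphi = \lambda_1 \varphi$ and a Harnack-type strong minimum principle this forces $\varphi > 0$ throughout $\widetilde M$.

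With $\varphi>0$ in hand the equivalences close up. For (1)$\Rightarrow$(2) I set $\widetilde g = \varphi^{4/(m-2)} g$; the conformal law gives $\scal(\widetilde g) = \lambda_1\, \varphi^{-4/(m-2)}$, whose sign is that of $\lambda_1$, while $\varphi>0$ together with its $\hho$ regularity keeps $\widetilde g$ feasible. For (1)$\Leftrightarrow$(3), testing against the positive minimizer gives $\nu \leq Q(\varphi)/\|\varphi\|_{2m/(m-2)}^2$, which is nonpositive whenever $\lambda_1 \leq 0$ since $Q(\varphi) = \lambda_1 \|\varphi\|_2^2$; conversely $\nu > 0$ forces $Q(\varphi) > 0$ and hence $\lambda_1 > 0$, and the scalar-negative and scalar-zero cases of the trichotomy follow from these together with the variational formula for $\lambda_1$. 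The one genuinely delicate implication is the scalar-positive case $\lambda_1 > 0 \Rightarrow \nu > 0$: passing from positivity of $Q$ on all of $\hho$ to a \emph{strictly} positive infimum requires a uniform lower bound, i.e. a Sobolev inequality of the form $\|u\|_{2m/(m-2)}^2 \leq C\, Q(u)$ on the edge space. I expect this to be the main obstacle, and the place where the feasibility hypotheses and the analysis of \cite{BV} must be used decisively; granting it, the remaining implications among (1)--(3) complete the cycle.
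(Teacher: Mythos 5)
Your overall strategy---the Rayleigh quotient for the Friedrichs extension of $\Box^g$, a positive first eigenfunction, and the pointwise conformal transformation law---is the same as the paper's, but there are two genuine gaps, one of which you concede yourself. The decisive one is the implication $\lambda_1>0\Rightarrow\nu([g])>0$: you reduce it to a Sobolev-type inequality $\|u\|^2_{L^{2m/(m-2)}}\le C\,Q(u)$ for the \emph{indefinite} quadratic form $Q$ of the original metric $g$, and then grant it. That is precisely the step that cannot be skipped, and the paper closes it by ordering the implications differently: it first proves (1)$\Rightarrow$(2), producing $\widetilde g=\phi_1^{4/(m-2)}g$ with
\[
\scal(\widetilde g)=\lambda_1\,\phi_1^{-\frac{4}{m-2}}\;\geq\;c_0>0,
\]
where the uniform positive lower bound uses that $\phi_1\in\hho(M)$ is bounded and bounded away from zero (heat-kernel mapping properties). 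Then in (2)$\Rightarrow$(3) it works with $\widetilde g$ rather than $g$, invoking conformal invariance $\nu([g])=\nu([\widetilde g])$: since the potential $\scal(\widetilde g)$ is now uniformly positive, the quadratic form of $\Box^{\widetilde g}$ dominates the full $H^{1,2}$-norm outright, and only the standard Sobolev embedding $H^{1,2}(M)\hookrightarrow L^{\frac{2m}{m-2}}(M)$ is needed---no estimate involving an indefinite potential ever appears. (Your version could in fact be repaired by interpolation, writing $Q\geq\theta\,\tfrac{4(m-1)}{m-2}\|\nabla u\|_2^2+\bigl((1-\theta)\lambda_1-\theta\|\scal(g)\|_\infty\bigr)\|u\|_2^2$ with $\theta$ small and then applying Sobolev; but you did not do this, and in either route the Sobolev embedding on the singular edge space is the unavoidable analytic input, not something to be ``granted.'')

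The second gap is strict positivity of the first eigenfunction. Your appeal to the elliptic maximum principle (Theorem \ref{maxprin}) yields no contradiction at a putative zero $p$ of $\varphi\geq0$: there $\varphi(p)=0$, so $(\Box^g\varphi)(p)=-\tfrac{4(m-1)}{m-2}(\Delta\varphi)(p)\leq0$ while simultaneously $(\Box^g\varphi)(p)=\lambda_1\varphi(p)=0$, which is perfectly consistent. All the work is therefore carried by your ``Harnack-type strong minimum principle,'' which is established nowhere in the paper and does not follow from Theorem \ref{maxprin} (a statement about signs of $\Delta u$ at extrema, i.e.\ a weak principle). The paper is explicit that excluding zeros is intricate in the edge setting and quotes it from \cite[Proposition 1.15]{ACM}; citing that result would be an acceptable fix, but as written your argument for $\varphi>0$ assumes its only nontrivial ingredient.
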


\medskip
\noindent
Related to the Yamabe flow in particular, we have
\begin{thm}\label{BV}
Let $(M,\ginit)$ be a feasible incomplete edge space of $\dim M \geq 3$.  Suppose $\scal(\ginit) \in \hhhs$ for some $\sigma \in (0,1)$.  Then there exists $\alpha \in (0,\sigma)$ such that the normalized Yamabe flow has a unique solution for a short-time in $\hho$.  Moreover, if the initial scalar curvature $\scal(\ginit) < 0$ then the normalized Yamabe flow starting at $\ginit$ has a unique solution on the infinite time interval $[0,\infty)$,
and converges to a metric of negative constant scalar curvature.
\end{thm}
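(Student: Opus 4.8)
The plan is to recast the flow as a scalar parabolic equation, produce uniform a priori bounds from maximum principle arguments anchored by the maximum principle stated above, upgrade these to uniform regularity, and then extend and pass to the limit. Since \eqref{NYF} preserves the conformal class, I write $g(t) = u(t)^{\frac{4}{m-2}}\ginit$ with $u>0$ and $u(0)\equiv 1$. Using the conformal transformation rule $\scal(g) = u^{-\frac{m+2}{m-2}}\,\Box^{\ginit} u$, the normalized flow is equivalent to the single quasilinear parabolic equation
\[ \partial_t u = \frac{m-2}{4}\,\rho(t)\,u - \frac{m-2}{4}\,u^{-\frac{4}{m-2}}\,\Box^{\ginit} u \]
for the conformal factor, with $\rho(t)$ the spatially constant average scalar curvature. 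Short-time existence and uniqueness of a solution $u\in\hho$ on a maximal interval $[0,T_{\max})$ is the content of our earlier work, which I invoke directly; the task is then to show $T_{\max}=\infty$ in the scalar negative case. I argue by contradiction: assuming $T_{\max}<\infty$, I establish uniform-in-time bounds on $[0,T_{\max})$ strong enough to restart the flow at $T_{\max}$.

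The core is a pair of uniform estimates. First I derive the evolution equation for $R(t):=\scal(g(t))$, which under the normalized flow reads
\[ \partial_t R = (m-1)\,\Delta_{g(t)} R + R\,\bigl(R-\rho(t)\bigr). \]
Applying the parabolic maximum principle to this equation --- whose validity on the singular edge space, including when the spatial extremum is attained on $\partial M$, is exactly what the maximum principle above supplies --- together with the hypothesis $\scal(\ginit)<0$, I show that $R(t)$ stays trapped between the two negative constants $\min_M\scal(\ginit)$ and $\max_M\scal(\ginit)$, and hence that $\rho(t)$ remains in a compact subinterval of $(-\infty,0)$. This is precisely where the sign condition is indispensable: at a spatial maximum $\Delta_{g(t)}R\le 0$ and $R-\rho\ge 0$, so negativity of $R$ makes the reaction term $R(R-\rho)$ nonpositive there, and symmetrically at a minimum. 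Since the normalized flow preserves volume, the quantity $\int_M u^{\frac{2m}{m-2}}\,\dvinit$ is constant in time; combining this conservation law with the two-sided scalar curvature bounds and a maximum principle applied directly to the $u$-equation yields uniform pointwise bounds $0<c\le u(t)\le C$ on $[0,T_{\max})$.

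The main obstacle is upgrading these $L^\infty$ bounds to uniform higher-order bounds, because the classical Krylov--Safonov and parabolic Schauder machinery is unavailable in the singular setting. Here I deploy the replacement strategy advertised in the abstract: rather than redeveloping an interior H\"older estimate, I bootstrap the $L^\infty$ control of $u$ into a uniform $\hho$ bound on $[0,T_{\max})$ using the mapping properties of the Friedrichs heat kernel between the hybrid incomplete edge H\"older spaces, representing $u$ through the heat semigroup of the linearized operator via Duhamel and estimating the nonlinear terms with the pointwise and scalar bounds just obtained. With a uniform $\hho$ bound in hand, $u(t)$ converges in $\hho$ to a limit $u_{T_{\max}}$ as $t\uparrow T_{\max}$; feeding $g(T_{\max})=u_{T_{\max}}^{\frac{4}{m-2}}\ginit$ into the short-time existence theorem restarts the flow and contradicts maximality, so $T_{\max}=\infty$. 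Uniqueness on $[0,\infty)$ follows from short-time uniqueness together with this extension procedure.

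Finally, for convergence as $t\to\infty$, I track the average scalar curvature functional $\rho(t)$, which is non-increasing along the normalized flow and bounded below by the estimates above, hence converges to some $\rho_\infty<0$. A standard computation gives
\[ \frac{d}{dt}\rho(t) = -\,c_m\,\vol(g(t))^{-1}\int_M \bigl(\scal(g(t))-\rho(t)\bigr)^2\,d\vol_{g(t)} \]
for a positive dimensional constant $c_m$, so that $\int_0^\infty\!\int_M (R-\rho)^2\,d\vol_{g(t)}\,dt<\infty$; combined with the uniform regularity this forces $R(t)\to\rho_\infty$ uniformly, i.e.\ the scalar curvature converges to a negative constant. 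A linearization argument at the limiting metric, in which negativity of $\rho_\infty$ guarantees invertibility of the relevant operator and hence a spectral gap, then promotes this to exponential convergence of $g(t)$ to a feasible edge metric of constant negative scalar curvature, completing the proof.
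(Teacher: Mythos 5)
Your overall strategy --- maximum-principle bounds for the scalar curvature and the conformal factor, a regularity upgrade, a restart at the maximal time, then convergence --- matches the paper's, and your $L^\infty$ layer is essentially sound (your trapping of $\scal(g(t))$ between $\min_M\scal(\ginit)$ and $\max_M\scal(\ginit)$ is even a little cleaner than the paper's explicit ODE integration; note, though, that uniqueness is not contained in the earlier short-time work you cite --- it is proved in Section \ref{unique-section} of this paper via the edge maximum principle). However, there are genuine gaps at exactly the two points the paper regards as most delicate. First, the passage from $L^\infty$ bounds to a uniform $\hho$ bound is not a routine Duhamel argument: on these edge spaces the heat operator maps bounded functions only into $\hoone$ (Proposition \ref{mapping2}), not into $\hho$; to gain the full two orders one needs the source term in $\ho$, but any Duhamel source for the constant-coefficient semigroup contains $\scal(g(t))$, equivalently $\Delta u$, whose H\"older regularity is precisely what you are trying to establish --- the bootstrap you describe is circular. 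The paper breaks the circle in three distinct steps: it bounds $\|\partial_t u\|_\infty$ (hence $\|\Delta u\|_\infty$) directly from the identity $\partial_t u = \frac{m-2}{4}\left(\rho - \scal(g)\right)u$; it then obtains a uniform $\hoone$ bound from the mapping property $\mathscr{B}\to\hoone$; and only then does it invoke an entirely separate variable-coefficient parabolic Schauder theory (the frozen-coefficient parametrix of Section \ref{parabolic-section}, Proposition \ref{prop:mainreg}) for $\partial_t + a\Delta$ with $a = \frac{m-1}{N}u^{1-N} \in \hoone$ bounded below, keeping the second-order term on the left of the equation rather than in the Duhamel source. Your proposal contains no substitute for this machinery, and without it the claimed uniform $\hho$ bound does not follow.

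Second, your restart step fails as stated: you propose to feed $g(T_{\max}) = u_{T_{\max}}^{\frac{4}{m-2}}\ginit$ into the short-time existence theorem, but that theorem requires a \emph{feasible} edge metric with $\mathcal{C}^{4+\sigma}_{\textup{ie}}$ scalar curvature, and it is not known --- the paper explicitly flags this --- that the evolved metric remains feasible even for a short time; conformal factors in $\hho$ preserve the edge structure but not the feasibility conditions on which all the heat-kernel analysis rests. The paper instead restarts the flow \emph{with respect to the original metric} $\ginit$, with initial condition $u_0 = u(T_{\max})$, which requires first upgrading $u(T_{\max})$ to $\mathcal{C}^{4+\A}_{\textup{ie}}$ regularity (this is where the hypothesis $\scal(\ginit)\in\hhhs$ is consumed, via Proposition \ref{prop:dudt}) and then running a fresh contraction-mapping argument. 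Your convergence step inherits the same defect: the linearization ``at the limiting metric'' presupposes regularity theory, hence feasibility, at $g^*$, which is unavailable; the paper instead derives the pointwise exponential decay $\|\scal(g(t))-\rho(t)\|_{\infty}\leq Ce^{-bt}$ from the maximum principle (Proposition \ref{prop:scalar-curv}) and concludes by compactness of bounded sets of $\hho(M)$ in $\mathcal{C}^{2+\A'}_{\textup{ie}}(M)$ for $\A'<\A$, never linearizing at the limit.
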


As a corollary of the last two theorems, we conclude with a solution to the Yamabe problem in the singular edge setting for scalar negative metrics, using parabolic methods. Note that \cite{ACM} contains a proof of this fact using elliptic PDE techniques. We point out 
that our parabolic methods lay groundwork for studying other flows like the mean curvature flow as well as the porous medium equation in the singular setting. 

\begin{cor}\label{BV-cor} 
Suppose $\dim M \geq 3$ and $\ginit$ is a feasible edge metric, such that $\scal(\ginit) \in \hhhs$ and $\scal(\ginit) < 0$.  Then there exists an incomplete edge metric $g$ conformal to $\ginit$ with constant negative scalar curvature.
\end{cor}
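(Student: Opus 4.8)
The plan is to produce the desired constant scalar curvature metric directly as the long-time limit of the normalized Yamabe flow emanating from $\ginit$, exploiting that the flow is by construction conformal. The hypotheses of Theorem \ref{BV} are precisely those of the corollary: $\dim M \geq 3$, the initial scalar curvature lies in $\hhhs$ for some $\sigma \in (0,1)$, and $\scal(\ginit) < 0$. Thus Theorem \ref{BV} applies verbatim and yields a unique solution $g(t)$, $t \in [0,\infty)$, of \eqref{NYF} with $g(0) = \ginit$, together with convergence $g(t) \to g_\infty$ as $t \to \infty$ to a metric $g_\infty$ of constant negative scalar curvature.

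First I would record that the Yamabe flow preserves the conformal class of the initial metric. Indeed, \eqref{NYF} has the form $\partial_t g = (\rho - \scal(g))\, g$, so each $g(t)$ is a pointwise positive multiple of $\ginit$; writing $g(t) = u(t)^{4/(m-2)} \ginit$ this amounts to a scalar evolution for the strictly positive conformal factor $u(t)$ in the space $\hho$ on which Theorem \ref{BV} furnishes the solution. Consequently the limit metric inherits the form $g_\infty = u_\infty^{4/(m-2)}\ginit$ with $u_\infty > 0$, so that $g := g_\infty$ is conformal to $\ginit$ and, by the convergence statement of Theorem \ref{BV}, is an incomplete edge metric with constant negative scalar curvature. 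This already establishes the corollary.

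To situate the result within the Yamabe-type dichotomy, I would then invoke Theorem \ref{negative} to confirm that we are genuinely in the scalar negative regime: since $\ginit$ itself realizes condition (2) of that theorem (it is a metric in $[\ginit]$ with scalar curvature everywhere $<0$), the stated equivalence forces $[\ginit]$ to be scalar negative, i.e.\ $\nu([\ginit]) < 0$ and the first eigenvalue of $\Box^{\ginit}$ negative. This explains why the constant produced by the flow must be negative and presents the outcome as the scalar negative case of the edge Yamabe problem.

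Since all the analytic difficulty—long-time existence, the maximum principle on the singular space, the uniform bounds replacing the Krylov--Safonov estimates, and the convergence argument—has been absorbed into Theorem \ref{BV}, no substantial obstacle remains. The only point requiring a little care is verifying that the convergence asserted by Theorem \ref{BV} takes place in a topology strong enough to guarantee that $g_\infty$ is again a feasible incomplete edge metric, so that ``conformal to $\ginit$'' is meaningful within the class under study, and that $u_\infty$ is strictly positive rather than merely nonnegative; both follow from the $\hho$-convergence of the conformal factor together with the uniform positive lower bound on $u(t)$ supplied in the proof of long-time existence.
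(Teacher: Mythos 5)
Your proposal is correct and is essentially the paper's own argument: the corollary is presented there as an immediate consequence of Theorems \ref{negative} and \ref{BV}, exactly as you argue, with the flow's conformal form $g(t)=u(t)^{\frac{4}{m-2}}\ginit$ and the uniform two-sided bounds on $u$ guaranteeing a strictly positive limit factor, hence a limit metric conformal to $\ginit$ of constant negative scalar curvature. One small caution: your final remark that the limit is again a \emph{feasible} incomplete edge metric overreaches, since the paper explicitly notes (end of \S\ref{invariance}) that feasibility is not known to be preserved under $\hho$ conformal changes — but this is harmless, because the corollary only asserts that the limit is an incomplete edge metric, which the lemma of \S\ref{invariance} does provide.
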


Our analysis proceeds by rewriting the Yamabe flow as a non-linear parabolic equation for the conformal factor.  There are several common alternatives for doing so, for example we may write $g=u^{\frac{4}{m-2}}\ginit$ or $g=e^{2v}\ginit$ and rewrite the (unnormalized) Yamabe flow $\partial_t g = - \scal(g) g$ as a scalar equation for $u$ or $v$ 
\begin{equation}\label{YF-conf} 
\begin{split}
&\partial_t u^{\frac{m+2}{m-2}} = \frac{m+2}{m-2}  \left( 
(m-1)\Delta^{\ginit} u - \frac{m-2}{4} \scal(\ginit) u\right), \quad u(t=0)=1, \\
&\partial_t e^{2v} = - 2(m-1) \Delta^{\ginit} v + (m-2)(m-1) 
|\nabla v|^2 - \scal(\ginit), \quad v(t=0) = 0.
\end{split}
\end{equation}
While we used the latter equation in our short-time existence analysis in \cite{BV}, we employ the former equation here to more closely match the work of Ye that we adapt.  For the remainder of the paper, unless otherwise stated, we use the convention that $\Delta$ denotes the negative Laplace Beltrami operator of the initial metric $(M,\ginit)$.  We write $N = \frac{m+2}{m-2}$ and $c(m):= \frac{m+2}{4}$.   The normalized Yamabe flow then devolves to 
\begin{equation} \label{flow2}
 \partial_t u^N = (m-1) N \Delta u - c(m) \scal(\ginit) u + c(m) \rho \, u^N.
\end{equation}

Let us finally point out that feasible incomplete edge metrics $\ginit$ with everywhere strictly negative 
scalar curvature in fact exist. Take for example the recent work by Jeffres, Mazzeo and Rubinstein \cite{JMR},
where the authors establish existence of K\"ahler Einstein metrics of constant negative Ricci curvature
with incomplete edge singularities along divisors of codimension two. Such a metric is in particular of constant negative scalar curvature.  By continuity of  scalar curvature, any small not necessarily K\"ahler smooth perturbation of such a metric of sufficiently high order at the edge singularity yields a Riemannian incomplete edge metric with everywhere strictly negative and bounded
scalar curvature. Feasibility assumptions in the codimension two setting are easy to check.
\medskip

We now give a brief outline of the proof of Theorem \ref{BV}, and explain the structure of the paper.  We review the basics of edge geometry in \S \ref{edge-section}.  From our previous work \cite{BV}, corresponding to any initial choice of feasible edge metric, $\ginit$, with $\scal(\ginit) \in \hhhs$ for some $\sigma \in (0,1)$, there exists an $\alpha \in (0,\sigma)$ and a positive solution to NYF as in equation \eqref{YF-conf} exists for a short time in $\hho(M \times [0,T))$. \medskip

\begin{enumerate}[leftmargin= 0.5 cm]
\item[1)] We prove that solutions in $\hho$ are unique. This requires setting up an appropriate edge maximum principle which we undertake in \S \ref{sec:maxprin}.  A standard uniqueness argument for the conformal factor then applies and is detailed in \S \ref{unique-section}. \medskip

\item[2)] It now makes sense to speak of the maximum interval of existence $[0,T_M)$ of the flow.  If, by way of contradiction, $T_M < \infty$, then we obtain $L^{\infty}$ estimates for $u$ on $[0,T_M)$ using the same maximum principle technique as in \cite{Ye}.  It is here that we use the assumptions on the sign of the scalar curvature of $\ginit$.  The argument is given in \S \ref{scal-section} and the first part of \S \ref{uniform-section}. \medskip

\item[3)] We then conclude $L^{\infty}$ estimates for $\partial_t u$ by an interesting new observation.  Combined with parabolic regularity, we are able to obtain uniform $\hho$ estimates for $u$ on $[0,T_M)$.  This is a major point of departure from the classical case which appealed to Krylov-Safonov estimates.  We prove the necessary parabolic regularity for variable coefficient equations in \S \ref{parabolic-section}, and the estimates for higher derivatives of $u$ is given in the second part of \S \ref{uniform-section}.  The regularity arguments are delicate and require additional mapping properties of the heat kernel. \medskip

\item [4)] The uniform estimates of $u$ allow us to extend $u$ to $t = T_M$ and we may then restart the flow.  This contradiction establishes $T_M = \infty$.  This is argued in \S \ref{long-section}. \medskip
\end{enumerate}

Finally, we prove Theorem \ref{BV} by arguing convergence in \S \ref{convergence-section}.  The proof of Theorem \ref{negative} is given in \S \ref{negative-section}. We emphasize that all of the analysis above, including restarting the flow in step 4, is done with respect to the original feasible metric $\ginit$.  While the NYF preserves the edge structure (see Section \ref{invariance} below), at present we do not know if the evolving metric remains feasible even for a short-time. 
\medskip

We conclude by mentioning some related work.  The work of Akutagawa, Carron and Mazzeo \cite{ACM} studies the Yamabe problem on stratified spaces from a variational point of view, and the paper of Mazzeo, Rubinstein, Sesum \cite{MRS} studies the Ricci flow on surfaces with conic singularities.  Both of these works, like the present one, make extensive use of geometric microlocal analysis techniques.  See also \cite{AkutagawaBotvinnik} and \cite{JeffresRowlett}.  An alternative ansatz using maximal regularity has been employed by Shao \cite{Shao} to establish 
short-time existence of Yamabe flow on manifolds with isolated conical singularities. The principle of maximal regularity has also
been applied by Schrohe and Roidos \cite{Schrohe} in the context of the porous media equation on isolated conical singularities.
The work of Yin \cite{Yin} approaches the Ricci flow on surfaces with conic singularities by yet another alternative ansatz.  Finally we mention flows of incomplete metrics that regularize the edge, see the work of Giesen and Topping \cite{Topping, Topping2} and Simon \cite{MilesSimon} for example. \medskip

\textbf{Acknowledgements.} We are happy to thank Rafe Mazzeo and Burkhard Wilking for helpful conversations.  The authors are grateful the CIRM program ``Evolutions equations in singular spaces'', April 2016, where this work was completed.  Travel of the first author supported by conference grant NSF-DMS1600014.

\section{Review of the singular edge geometry}\label{edge-section}

\subsection{Incomplete edge spaces} 

Consider a compact stratified space  $\overline{M}$
which is comprised of a single open stratum $M$ of dimension $m\geq 3$ and 
a single lower dimensional stratum $B$, the edge singularity, which is a closed manifold
of dimension $b$. By definition of stratified spaces there exists an open 
neighborhood $U\subset \overline{M}$ of $B$
together with a radial function $x:U \to [0,1)$, such that $U\cap M$ 
is the total space of a smooth fibre bundle over $B$ with the trivial fibre 
given by a truncated cone $\mathscr{C}(F)=(0,1)\times F$ 
over a compact smooth manifold $F$ of dimension $n$.
We assume $n\geq 1$ throughout this paper. The radial function $x$ 
restricts to a radial function of that cone on each fibre.
\medskip

In order to define continuity up to the edge, we may resolve the singular stratum 
$B$ in $\overline{M}$ to define a compact manifold $\widetilde{M}$ with boundary $\partial M$. 
$\partial M$ is then the total space of a fibration $\phi: \partial M \to B$ with the fibre $F$. 
Assume $\dim F\geq 1$ throughout the paper. Under the resolution, the neighborhood $U$ lifts to a collar neighborhood 
$\U \subset \widetilde{M}$, which is a smooth fibration of cylinders 
$[0,1)\times F$ over $B$ with the radial function $x$.
Clearly $M=\widetilde{M} \backslash \partial M$.

\begin{defn}\label{d-edge}
A Riemannian manifold with an incomplete edge singularity is the open stratum $M$ together with 
a Riemannian metric $g$, such that over $\U\backslash \partial M$ the metric attains the form $g_0+h$ with
$$g_0 =dx^2+x^2 g^F+\phi^*g^B,$$
where $g^B$ is a Riemannian metric on the closed manifold $B$, 
$g^F$ is a symmetric 2-tensor on the fibration $\partial M$ restricting to an 
isospectral family of Riemannian metrics on the fibres $F$, and $|h|_{g_0}$ is smooth on $\U$ and 
vanishes at $x=0$. 
\end{defn}

In our previous work \cite{BV} we had to pose further assumptions on the Riemannian metric $g$
in order to establish parabolic Schauder estimates in the incomplete edge setting and 
derive short-time existence of the edge Yamabe flow. \medskip

We summarize this and the other assumptions from our previous work 
into the notion of feasible edge metrics.

\begin{defn}\label{def-feasible}
Let $(M,g)$ be a Riemannian manifold with an edge metric. This metric $g=g_0+h$ is said to be 
feasible if the following additional conditions are satisfied 
\begin{enumerate}
\item $|h|_{g_0}=O(x^2)$ as $x\to 0$, 
\item $\phi: (\partial M, g^F + \phi^*g^B) \to (B, g^B)$ is a Riemannian submersion, 
\item the lowest non-zero eigenvalue $\lambda_0 > 0$ of $\Delta_F$ satisfies $\lambda_0 > n$.
\end{enumerate}
\end{defn} 

We refer the reader to \cite[\S 1.1]{BV} for more details on these 
feasibility assumptions and elaboration where these conditions have been 
used in our argument. In a recent work by the second named author 
\cite{Ricci-Vertman} on the Ricci flow for edge manifolds, the last 
assumption of Definition \ref{def-feasible} has been dropped and in fact can 
be dropped in the present setting as well. 

\subsection{Microlocal heat kernel asymptotics} 

Let $\Delta$ denote the Friedrichs self-adjoint extension of 
the (negative) Laplace Beltrami operator on $(M,g)$, with domain $\dom (\Delta)$. 
The corresponding heat operator of $\Delta$ acts as an integral convolution operator 
on $u(t,\cdot)\in \dom (\Delta)$ 
\begin{equation} \label{eqn:hk-on-functions}
e^{t\Delta} u(t,p) = \int_0^t \int_M H\left( t-\wt, p,\widetilde{p} \right) 
u(\wt, \widetilde{p}) \dv (\widetilde{p}) \, d\wt,
\end{equation}
and solves the inhomogeneous heat problem
\begin{equation*}
(\partial_t - \Delta) \w(t,p)  = u(t,p), \ \w(0,p)=0,
\end{equation*}
for any $u(t,\cdot)\in \dom (\Delta)$. The heat kernel $H$ is a function on $M^2_h=\R^+\times \widetilde{M}^2$
with non-uniform behavior along certain submanifolds of $M^2_h$. 
Consider local coordinates $(y)$ on $B$, lifted to $M$ and then extended inwards to the interior 
of $M$. Let coordinates $(z)$ restrict to local coordinates on fibres $F$. Then, the local 
coordinates near the corner in $M^2_h$ are given by $(t, (x,y,z), (\widetilde{x}, \wy, \widetilde{z}))$, 
where $(x,y,z)$ and $(\widetilde{x}, \wy, \widetilde{z})$ are two copies of coordinates fixed above near the edge. 
The heat kernel $H(t, (x,y,z), (\wx,\wy,\wz))$ behaves non-uniformly at the submanifolds
\begin{align*}
&A =\{ (t, (x,y,z), (\wx,\wy,\wz))\in M^2_h \mid t=0, \, x=\wx=0, \, y= \wy\}, \\
&D =\{ (t, p, \widetilde{p})\in M^2_h \mid t=0, \, p=\widetilde{p}\}.
\end{align*}
The non-uniform behavior of $H$ is resolved by blowing up the submanifolds $A$ and $D$ 
appropriately, such that the heat kernel lifts to a polyhomogeneous function
in the sense of the following definition, cf. \cite{Mel:TAP}.

\begin{defn}\label{phg}
Let $\mathfrak{W}$ be a manifold with corners and denote by $\{(H_i,\rho_i)\}_{i=1}^N$ an enumeration 
of its (embedded) boundaries with the respective defining functions. For any multi-index $b= (b_1,
\ldots, b_N)\in \C^N$ we write $\rho^b = \rho_1^{b_1} \ldots \rho_N^{b_N}$.  Denote by 
$\mathcal{V}_b(\mathfrak{W})$ the space of smooth vector fields on $\mathfrak{W}$ which lie
tangent to all boundary faces. A distribution $\w$ on $\mathfrak{W}$ is said to be conormal,
if $\w$ is a restriction of a distribution across the boundary faces of $\mathfrak{W}$, 
$\w\in \rho^b L^\infty(\mathfrak{W})$ for some $b\in \C^N$ and $V_1 \ldots V_\ell \w \in \rho^b L^\infty(\mathfrak{W})$
for all $V_j \in \mathcal{V}_b(\mathfrak{W})$ and for every $\ell \geq 0$. An index set 
$E_i = \{(\gamma,p)\} \subset {\mathbb C} \times {\mathbb N_0}$ 
satisfies the following hypotheses:

\begin{enumerate}
\item $\textup{Re}(\gamma)$ accumulates only at $+\infty$,
\item for each $\gamma$ there exists $P_{\gamma}\in \N_0$, such 
that $(\gamma,p)\in E_i$ for all $p \leq P_\gamma$,
\item if $(\gamma,p) \in E_i$, then $(\gamma+j,p') \in E_i$ for all $j \in {\mathbb N_0}$ and $0 \leq p' \leq p$. 
\end{enumerate}
An index family $E = (E_1, \ldots, E_N)$ is an $N$-tuple of index sets. 
A conormal distribution $\w$ is said to be polyhomogeneous on $\mathfrak{W}$ 
with index family $E$, written as $\w\in \mathscr{A}_{\textup{phg}}^E(\mathfrak{W})$, 
if $\w$ is conormal and expands near each $H_i$ 
\[
\w \sim \sum_{(\gamma,p) \in E_i} a_{\gamma,p} \rho_i^{\gamma} (\log \rho_i)^p, \ 
\textup{as} \ \rho_i\to 0,
\]
with coefficients $a_{\gamma,p}$ being again polyhomogeneous with index $E_j$
at any $H_i\cap H_j$. 
\end{defn}

The blowup procedure amounts to a geometrically invariant way of introducing polar
coordinates around the given submanifold, so that the rays of approaching the submanifold
are distinguished in the final blowup. The blowup space is then equipped with the 
minimal differential structure with respect to which polar coordinates are smooth.
For a detailed account on the blowup procedure please consult e.g. in \cite{Mel:TAP} and \cite{Gr}. \medskip

In the special case considered here, we always treat $\sqrt{t}$ as a smooth variable
and first blow up the submanifold $A$. This defines $[M^2_h, A]$ as the disjoint union of
$M^2_h\backslash A$ with the interior spherical normal bundle of $A$ in $M^2_h$, which 
defines a new boundary hypersurface $-$ the front face ff in addition to the previous boundary faces 
$\{x=0\}, \{\wx=0\}$ and $\{t=0\}$, which lift to rf (the right face), lf (the left face) and 
tf (the temporal face), respectively.  \medskip

The actual heat-space $\mathscr{M}^2_h$ is obtained by a second blowup of  
$[M^2_h, A]$ along the diagonal $D$, lifted to a submanifold of $[M^2_h, A]$. 
As before, the lift of $D$ is cut and replaced with its spherical 
normal bundle, which introduces a new boundary face $-$ the temporal diagonal td. 
The resulting heat space $\mathscr{M}^2_h$ is illustrated in Figure 1. \medskip

\begin{figure}[h]
\begin{center}
\begin{tikzpicture}
\draw (0,0.7) -- (0,2);
\draw[dotted] (-0.1,0.7) -- (-0.1, 2.2);
\node at (-0.4,2) {t};

\draw(-0.7,-0.5) -- (-2,-1);
\draw[dotted] (-0.69,-0.38) -- (-2.05, -0.9);
\node at (-2.05, -0.6) {$x$};

\draw (0.7,-0.5) -- (2,-1);
\draw[dotted] (0.69,-0.38) -- (2.05, -0.9);
\node at (2.05, -0.6) {$\wx$};

\draw (0,0.7) .. controls (-0.5,0.6) and (-0.7,0) .. (-0.7,-0.5);
\draw (0,0.7) .. controls (0.5,0.6) and (0.7,0) .. (0.7,-0.5);
\draw (-0.7,-0.5) .. controls (-0.5,-0.6) and (-0.4,-0.7) .. (-0.3,-0.7);
\draw (0.7,-0.5) .. controls (0.5,-0.6) and (0.4,-0.7) .. (0.3,-0.7);

\draw (-0.3,-0.7) .. controls (-0.3,-0.3) and (0.3,-0.3) .. (0.3,-0.7);
\draw (-0.3,-1.4) .. controls (-0.3,-1) and (0.3,-1) .. (0.3,-1.4);

\draw (0.3,-0.7) -- (0.3,-1.4);
\draw (-0.3,-0.7) -- (-0.3,-1.4);

\node at (1.2,0.7) {\large{rf}};
\node at (-1.2,0.7) {\large{lf}};
\node at (1.1, -1.2) {\large{tf}};
\node at (-1.1, -1.2) {\large{tf}};
\node at (0, -1.7) {\large{td}};
\node at (0,0.1) {\large{ff}};
\end{tikzpicture}
\end{center}
\label{heat-incomplete}
\caption{The heat-space $\mathscr{M}^2_h$.}
\end{figure}

We now proceed with defining projective coordinates in a neighborhood of the front face in $\mathscr{M}^2_h$.
These may be used as a convenient replacement for the polar coordinates and provide explicit technical 
tools for computations on the heat space $\mathscr{M}^2_h$. The disadvantage is however that projective coordinates are 
not globally defined over the front face and one needs to choose different coordinates near each of the front face corners. 
\medskip

Near the top corner of the front face ff, projective coordinates are given by
\begin{align}\label{top-coord}
\rho=\sqrt{t}, \  \xi=\frac{x}{\rho}, \ \widetilde{\xi}=\frac{\wx}{\rho}, \ u=\frac{y-\wy}{\rho}, \ z, \ \wy, \ \wz.
\end{align}
In this coordinate system, the functions $\rho, \xi, \widetilde{\xi}$ define 
the boundary faces ff, rf and lf respectively. 
For the bottom right corner of the front face, projective coordinates are given by
\begin{align}\label{right-coord}
\tau=\frac{t}{\wx^2}, \ s=\frac{x}{\wx}, \ u=\frac{y-\wy}{\wx}, \ z, \ \wx, \ \wy, \ \widetilde{z},
\end{align}
where in these coordinates $\tau, s, \widetilde{x}$ are
the defining functions of tf, rf and ff respectively. 
For the bottom left corner of the front face, the corresponding
projective coordinates are obtained by interchanging 
the roles of $x$ and $\widetilde{x}$. Projective coordinates 
on $\mathscr{M}^2_h$ near temporal diagonal are given by 
\begin{align}\label{d-coord}
\eta=\frac{\sqrt{t}}{\wx}, \ S =\frac{(x-\wx)}{\sqrt{t}}, \ 
U= \frac{y-\wy}{\sqrt{t}}, \ Z =\frac{\wx (z-\wz)}{\sqrt{t}}, \  \wx, \ 
\wy, \ \widetilde{z}.
\end{align}
In these coordinates, tf is defined as the limit $|(S, U, Z)|\to \infty$, 
ff and td are defined by $\widetilde{x}, \eta$, respectively. 
The blow-down map $\beta: \mathscr{M}^2_h\to M^2_h$ is in 
local coordinates simply the coordinate change back to 
$(t, (x,y, z), (\widetilde{x},\wy, \widetilde{z}))$. \medskip

Asymptotic properties of the heat kernel as a polyhomogeneous
distribution on $\mathscr{M}^2_h$ have been established by the second 
author jointly with Mazzeo in \cite{MazVer:ATO} giving the following result.

\begin{thm}\label{heat-expansion}
Let $(M,g)$ be an incomplete edge space with a feasible edge metric $g$.
Then the lift $\beta^*H$ of the heat kernel is a polyhomogeneous distribution on $\mathscr{M}^2_h$ 
with the index set $(-1+m, 0)$ at ff, $(-m+\N_0, 0)$ at td, vanishing to infinite order at tf, and with a discrete 
index set $(E,0)$ at rf and lf, where $E\geq 0$.  
\end{thm}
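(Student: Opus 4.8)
The plan is to construct the heat kernel directly as a polyhomogeneous distribution on $\mathscr{M}^2_h$ by a parametrix method adapted to the edge heat calculus, rather than to extract its asymptotics from an \emph{a priori} solution. The starting point is to lift the heat operator $L := t(\partial_t - \Delta)$ to $\mathscr{M}^2_h$ and to record how it degenerates at each boundary hypersurface using the projective coordinates \eqref{top-coord}, \eqref{right-coord} and \eqref{d-coord}. At each face this produces a model (normal) operator whose invertibility governs the corresponding index set, and the proof reduces to solving these model problems and then patching and correcting them into a genuine inverse of $L$.

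The first model lives at the temporal diagonal td. In the coordinates \eqref{d-coord} the operator $L$ restricts to the constant-coefficient Euclidean heat operator on the tangent space $T_pM \cong \bR^m$, whose inverse is the classical Gaussian kernel. This contributes the leading singularity: since the Euclidean kernel carries the factor $t^{-m/2}$, in the blow-up it produces precisely the index $(-m+\mathbb{N}_0,0)$ at td, with the integer shifts coming from the Taylor coefficients of the metric, while its Gaussian decay forces vanishing to infinite order at tf. The second and decisive model lives at the front face ff, where $L$ degenerates to the Friedrichs heat operator of $\Delta$ on the exact tangent cone bundle $\bR^b_y \times \mathscr{C}(F)$. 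This model is solved by separating variables: expanding in the eigenbasis of the fibre Laplacian $\Delta_F$ and reducing the radial part to a family of Bessel-type operators on the cone $\mathscr{C}(F)$. The feasibility hypotheses enter decisively here: assumption (2) of Definition \ref{def-feasible} guarantees that this separation is exact, while the spectral gap $\lambda_0 > n$ of assumption (3) ensures that the Friedrichs extension is the correct self-adjoint realization and that the resulting model kernel is polyhomogeneous with the stated front-face index $(-1+m,0)$.

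The index sets $(E,0)$ at the right and left faces are then read off from the indicial roots of the radial Bessel operator, obtained by Mellin transforming in $x$ (respectively $\widetilde{x}$); the Friedrichs extension selects the larger indicial root at each stage, which is exactly what yields $E \geq 0$. With the model solutions at td and ff in hand, I would assemble an initial parametrix $H_0$ and compute the error $L H_0 = \mathrm{Id} - R_0$, where $R_0$ vanishes at td and at tf to infinite order but only to finite order at ff. The error is then removed iteratively: one solves the front-face model operator against the restriction of $R_0$ to improve the order of vanishing at ff, and finally composes away the remaining smoothing error through a convergent Neumann series. I expect the principal difficulty to be precisely this front-face analysis — establishing that the inverse of the edge model operator exists on the appropriate weighted spaces and that its Schwartz kernel is polyhomogeneous with the claimed index set — together with the bookkeeping needed to show that these index sets are stable under the composition (pushforward and pullback) operations of the heat calculus, in the spirit of Melrose's polyhomogeneity theorems. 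Carrying the feasibility assumptions through each composition is what ultimately pins down the four index sets in the statement.
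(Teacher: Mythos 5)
The first thing to note is that the paper itself does not prove this theorem: as the sentence immediately preceding it states, the result is quoted from the earlier work of Mazzeo and the second author \cite{MazVer:ATO}, so the comparison has to be with the construction carried out there. In structure, your outline is exactly that construction: lift the heat operator to $\mathscr{M}^2_h$; solve the normal problem at td with the Euclidean heat kernel, which accounts for the index set $-m+\mathbb{N}_0$ there and the infinite-order vanishing at tf; solve the normal problem at ff with the heat kernel of the model edge $\mathbb{R}^b_y\times\mathscr{C}(F)$, obtained by separation of variables in the $\Delta_F$-eigenbasis with modified Bessel radial factors, the Friedrichs condition selecting the nonnegative indicial roots and hence giving $E\geq 0$ at rf and lf; then remove the error of the resulting parametrix by a Volterra/Neumann series inside the heat calculus, using composition (pullback/pushforward) theorems to preserve polyhomogeneity. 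You also correctly locate the technical heart in the front-face model problem.

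Two genuine defects remain. First, you misassign the role of the spectral-gap hypothesis $\lambda_0>n$ of Definition \ref{def-feasible}(3): it does not make the Friedrichs extension ``the correct self-adjoint realization'' --- the Friedrichs extension of the nonnegative Laplacian exists and is self-adjoint unconditionally --- and it is not needed for polyhomogeneity of the model kernel. The paper states immediately after Definition \ref{def-feasible} that this assumption can be dropped entirely (following \cite{Ricci-Vertman}), so it cannot be what underwrites the theorem; what the front-face analysis actually requires are conditions (1) and (2), which together guarantee that the normal operator at ff is precisely the product model, with the $O(x^2)$ perturbation $h$ contributing nothing at leading order. Second, you assert the front-face index set rather than derive it: your own (correct) scaling reasoning at td --- the model kernel is homogeneous of degree $-m$ --- applies equally well at ff and would give leading order $-m$ there, so reconciling this with the index set recorded at ff in the statement requires the actual solution of the model problem together with the normalization conventions of \cite{MazVer:ATO}; your sketch defers exactly the step that produces this exponent. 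Neither defect breaks the strategy, but both would need repair in a complete write-up.
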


One particular consequence of Theorem \ref{heat-expansion}
is that $H$ is square-integrable on $M\times M$ and hence the heat operator is Hilbert 
Schmidt. By the semigroup property of the heat operator, the 
heat operator is trace class and hence we conclude that the 
Laplacian $\Delta$ admits a discrete spectrum accumulating at infinity. 

\subsection{Mapping properties of the heat operator} 

Using feasibility of the edge metric, mapping properties of the heat operator were established in \cite{BV} 
with respect to certain H\"older spaces, which we now introduce.  We also refer the reader to \cite{JefLoy:RSH} and \cite{BDV} for related mapping properties.

\begin{defn} \label{holder-def}
The H\"older space\footnote{Note that in our earlier work, \cite{BV}, 
we denoted these spaces with the greek letter $\Lambda$.} 
$\ho(M\times [0,T]), \A\in (0,1),$ is defined as the space of functions 
$u(p,t)$ that are continuous on $\widetilde{M} \times [0,T]$ with finite $\A$-th H\"older norm
\begin{align*}
\|u\|_{\A}:=\|u\|_{\infty} + \sup \left(\frac{|u(p,t)-u(p',t')|}{d_M(p,p')^{\A}+
|t-t'|^{\frac{\A}{2}}}\right) <\infty. 
\end{align*}
The distance function $d_M(p,p')$ between any two points $p,p'\in \widetilde{M}$ is defined with respect to the 
incomplete feasible edge metric $g$.  In local coordinates in a singular edge neighbourhood, $d_M$ 
may be equivalently defined by
$$d_M((x,y,z), (x',y',z'))=\sqrt{|x-x'|^2+(x+x')^2|z-z'|^2 + |y-y'|^2}.$$  
\end{defn} \medskip

The higher order H\"older spaces are now defined in the following way. 
Consider the Lie algebra of edge vector fields $\mathcal{V}_e$, which are defined to be smooth 
in the interior of $\widetilde{M}$ and tangent at the boundary $\partial M$ to the fibres of the fibration. 
In local coordinates, $\mathcal{V}_e$ is then locally generated by 
\[
\left\{x\frac{\partial}{\partial x}, x\frac{\partial}{\partial y_1}, \dots, x \frac{\partial}{\partial y_b}, 
\frac{\partial}{\partial z_1},\dots, \frac{\partial}{\partial z_n}\right\}.
\]
Then the higher order H\"older spaces are defined as follows ($k\in \N_0$)
\begin{equation*}
\begin{split}
&\mathcal{C}^{1+\A}_{\textup{ie}} (M\times [0,T]) = 
\{u\in \ho \mid x^{-1}\mathcal{V}_e u \in \ho\}, \\
&\hho (M\times [0,T]) = \{u\in \ho \mid \Delta u, x^{-1}\mathcal{V}_e u, \partial_t u \in \ho\}, \\ 
&\hoonek (M\times [0,T]) = \{u\in \hoone \mid \Delta^j u \in \hoone, j\leq k \}, \\
&\hhokk (M\times [0,T]) = \{u\in \hho \mid \Delta^j u \in \hho, j\leq k\}, 
\end{split}
\end{equation*}
where differentiation is understood a priori in the distributional sense, and the H\"older norms
are given by ($\mathcal{V}_e$ is identified locally with the finite set of its generators)
\begin{equation}\label{norms}
\begin{split}
&\|u\|_{1+\A}= \|u\|_{\A} + \sum_{X\in \mathcal{V}_e} \| x^{-1}X u\|_{\A}, \\
&\|u\|_{2+\A}= \|u\|_{\A} + \|\Delta u\|_{\A} + \|\partial_t u\|_{\A} +
\sum_{X\in \mathcal{V}_e} \| x^{-1}X u\|_{\A}, \\
&\|u\|_{2k+1+\A}= \sum_{j=0}^{k} \|\Delta^j u\|_{1+\A}, 
\quad \|u\|_{2(k+1)+\A}= \sum_{j=0}^{k} \|\Delta^j u\|_{2+\A}.
\end{split}
\end{equation}
We refer the reader to \cite[Prop 3.1]{BV} for the proof that these spaces are Banach spaces.  The choice of derivatives in defining these hybrid spaces strongly depends not only on the geometry of the underlying edge space, but also on the actual equation we wish to solve. In fact such a definition seems not only to provide
a framework for treatment of parabolic Schauder-type estimates on incomplete edges, 
but has been an important tool in studying K\"ahler-Einstein edge metrics by \cite{Donaldson, JMR},
used crucially for solving the Calabi conjecture on Fano manifolds.\medskip

While the definition of the H\"older spaces above formally 
depends on a choice of background metric $g$, as a set they are invariant under certain conformal transformations of $g$.

\begin{prop} Suppose $g$ is a feasible incomplete egde metric and $g' = 
u^{\frac{4}{m-2}} g$ where $u \in \mathcal{C}^{1+\A}_{\textup{ie},g}$ is bounded away from zero, and the additional 
subscript indicates dependence on the Riemannian metric.  Then $\mathcal{C}^{k+\A}_{\textup{ie},g}
=\mathcal{C}^{k+\A}_{\textup{ie},g'}$ for $k=0,1,2$, with equivalent norms.
\end{prop}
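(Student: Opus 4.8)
The plan is to handle the three values of $k$ in increasing order, exploiting that each space is built from the previous one and that only the top-order case $k=2$ sees the metric in an essential way. For $k=0$, since $u\in\mathcal{C}^{1+\A}_{\textup{ie},g}\subset\ho$ is continuous on the compact space $\widetilde{M}$ and bounded away from zero, the conformal factor $u^{4/(m-2)}$ is bounded between two positive constants; hence $g$ and $g'$ are uniformly equivalent as quadratic forms, $c\,g\le g'\le C\,g$ for some $0<c\le C$, and so the associated distance functions satisfy $\sqrt{c}\,d_{M,g}\le d_{M,g'}\le\sqrt{C}\,d_{M,g}$. Because the parabolic factor $|t-t'|^{\A/2}$ is independent of the metric, the two H\"older seminorms are comparable, giving $\mathcal{C}^{\A}_{\textup{ie},g}=\mathcal{C}^{\A}_{\textup{ie},g'}$ with equivalent norms. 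First I would record this comparison.

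For $k=1$, the edge vector fields $\mathcal{V}_e$ and the boundary defining function $x$ are attached to the resolved manifold $\widetilde{M}$ and its boundary fibration, not to the Riemannian metric, so the defining condition $x^{-1}\mathcal{V}_e u\in\ho$ involves the metric only through the space $\ho$, which is metric--independent by the case $k=0$; this yields $\mathcal{C}^{1+\A}_{\textup{ie},g}=\mathcal{C}^{1+\A}_{\textup{ie},g'}$ at once. For $k=2$ I would write $g'=e^{2\psi}g$ with $\psi=\tfrac{2}{m-2}\log u$ and recall the conformal transformation law
\begin{equation*}
\Delta_{g'} f = e^{-2\psi}\left( \Delta_g f + (m-2)\,\langle \nabla_g \psi, \nabla_g f\rangle_g \right),
\end{equation*}
valid as an identity of differential operators, hence distributionally (with the obvious sign adjustment for the negative-Laplacian convention in use here). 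Two soft closure properties, both consequences of $\ho$ being a Banach algebra stable under composition with smooth functions, then come into play: the factor $e^{-2\psi}=u^{-4/(m-2)}$ lies in $\ho$ and is bounded away from $0$ and $\infty$ because $u$ is, and $\psi\in\mathcal{C}^{1+\A}_{\textup{ie}}$ by the chain rule $x^{-1}\mathcal{V}_e\log u=u^{-1}(x^{-1}\mathcal{V}_e u)\in\ho$.

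The heart of the argument is an estimate I would isolate as a lemma: the bilinear pairing $(\psi,f)\mapsto\langle\nabla_g\psi,\nabla_g f\rangle_g$ maps $\mathcal{C}^{1+\A}_{\textup{ie}}\times\mathcal{C}^{1+\A}_{\textup{ie}}$ boundedly into $\ho$. Working in a singular edge chart with the normalized frame $\{x^{-1}X:X\in\mathcal{V}_e\}$, one has $\partial_x=x^{-1}(x\partial_x)$, $\partial_{y}=x^{-1}(x\partial_{y})$, and $x^{-1}\partial_{z}$ all sending $\mathcal{C}^{1+\A}_{\textup{ie}}$ functions into $\ho$; expanding $g^{ij}\partial_i\psi\,\partial_j f$, the only potentially singular contribution is the fibre block carrying the weight $x^{-2}$, and this recombines exactly as $x^{-2}\partial_{z_\alpha}\psi\,\partial_{z_\beta}f=(x^{-1}\partial_{z_\alpha}\psi)(x^{-1}\partial_{z_\beta}f)$, a product of two $\ho$ functions against a bounded coefficient. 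Granting the lemma, for $f\in\mathcal{C}^{2+\A}_{\textup{ie},g}$ we have $f\in\mathcal{C}^{1+\A}_{\textup{ie}}$ and $\Delta_g f\in\ho$, so the displayed formula gives $\Delta_{g'}f\in\ho$, while the remaining conditions $x^{-1}\mathcal{V}_e f,\partial_t f\in\ho$ are metric--independent; hence $f\in\mathcal{C}^{2+\A}_{\textup{ie},g'}$. The reverse inclusion follows by symmetry, since $g=(u^{-1})^{4/(m-2)}g'$ with $u^{-1}$ bounded away from zero and, by the case $k=1$, in $\mathcal{C}^{1+\A}_{\textup{ie},g'}$. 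Tracking constants through these inclusions, which involve only bounded $\ho$ coefficients together with $\|f\|_{1+\A}\le\|f\|_{2+\A}$, produces the equivalence of norms.

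The hard part will be the boundedness lemma for the first-order term $\langle\nabla_g\psi,\nabla_g f\rangle_g$: one must verify that the $x^{-2}$ weights in the fibre directions are precisely absorbed by the two normalized derivatives, and that the off-diagonal perturbation $h$ does not spoil this. This is exactly where the feasibility hypotheses — the model form $g_0=dx^2+x^2g^F+\phi^*g^B$, the Riemannian submersion condition, and $|h|_{g_0}=O(x^2)$ — are used to guarantee that every component of $g^{ij}$, once paired with the frame above, contributes a bounded $\ho$ coefficient. Everything else is soft: comparability of distances for $k=0$, metric--independence of $\mathcal{V}_e$ for $k=1$, and the algebra and composition stability of $\ho$.
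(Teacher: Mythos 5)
Your proof is correct and takes essentially the same route as the paper's: distance equivalence for $k=0$, metric-independence of $x$ and $\mathcal{V}_e$ for $k=1$, and the conformal transformation law of the Laplacian for $k=2$. The gradient-pairing lemma you isolate as the heart of the matter is precisely what the paper compresses into the notation $Q\{x^{-1}\mathcal{V}_e u,\, x^{-1}\mathcal{V}_e w,\, u\}$ (a smooth combination of normalized edge derivatives), so the two arguments coincide in substance, with yours supplying the coordinate-level details the paper omits.
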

\begin{proof}
Since $u \in \mathcal{C}^{1+\A}_{\textup{ie},g}$ is continuous up to $\partial M$, it is easy to check that 
the distances $d_M$, defined with respect to $g$ and $g'$ are equivalent. 
So if $\w \in \mathcal{C}^{\A}_{\textup{ie},g}$, 
then $\w \in \mathcal{C}^{\A}_{\textup{ie},g'}$, and the norms are equivalent.  This proves the statement for 
$k=0$. Since the derivatives in the definition of $\mathcal{C}^{1+\A}_{\textup{ie}}$ are chosen 
independent of the Riemannian metric, the statement follows for $k=1$ as well. In order to 
verify the statement for $k=2$, note that 
$$
\Delta_{g'} \w = u^{-\frac{2}{m-2}} \Delta_g \w + 
Q\{x^{-1}\mathcal{V}_e u, x^{-1}\mathcal{V}_e \, \w, u\},
$$ 
where $Q$ denotes some smooth
combination of the elements in the brackets. \end{proof}

Let us now recall the fundamental result in \cite{BV} on the mapping properties of the heat operator
with respect to these H\"older spaces. The result of our previous work in \cite[Theorem 3.2]{BV}
has been established under stronger assumptions on the metric $\ginit$ and in fact a small
change of argument, as outlined in the work of the second named author in \cite{Ricci-Vertman},
is needed to extend the statement to our setting of feasible edge metrics. 

\begin{thm}\label{mapping} 
Let $(M^m,g)$ be a feasible incomplete edge space. Then for $\A \in (0,1)$ sufficiently
small and any $k\in \N_0$, the heat operator $e^{t\Delta}$ acts as a bounded convolution operator 
\begin{align*}
&e^{t\Delta}: \hhok (M\times [0,T]) \to 
\left( \mathcal{C}^{2(k+1)+\A}_{\textup{ie}} \cap \sqrt{t} \, \hhok \right) (M\times [0,T]), \\
&e^{t\Delta}: \mathcal{C}^{2k+1+\A}_{\textup{ie}} (M\times [0,T]) \to 
\sqrt{t} \, \mathcal{C}^{2(k+1)+\A}_{\textup{ie}} (M\times [0,T]).
\end{align*}
\end{thm}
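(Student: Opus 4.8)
The plan is to recall the structure of the proof of \cite[Theorem 3.2]{BV} and to explain why it survives the relaxation of the stronger hypotheses there to mere feasibility, following \cite{Ricci-Vertman}. Everything rests on Theorem \ref{heat-expansion}: since $\beta^*H$ is polyhomogeneous on $\mathscr{M}^2_h$ with known index sets --- leading order $m-1$ at ff, $-m+\N_0$ at td, infinite-order vanishing at tf, and a nonnegative discrete index set at rf and lf --- the convolution operator $e^{t\Delta}$ of \eqref{eqn:hk-on-functions} can be analyzed by pulling each integral back to the heat space and reading the weights off the boundary defining functions. The crucial observation is that the front-face defining function is exactly $\rho=\sqrt{t}$ in the projective coordinates \eqref{top-coord}, so the leading ff order $m-1$ is the source of every factor of $\sqrt{t}$ appearing in the target spaces.

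First I would establish the base case $k=0$. Given $u\in\ho$, I write $e^{t\Delta}u$ as in \eqref{eqn:hk-on-functions}, lift the integral to $\mathscr{M}^2_h$, and estimate each defining quantity of the $\hho$ norm --- the function itself, $\Delta(e^{t\Delta}u)$, $\partial_t(e^{t\Delta}u)$, and $x^{-1}\mathcal{V}_e(e^{t\Delta}u)$. Applying $\Delta$ or an edge vector field raises the order of the singularity along td, and the resulting convolutions converge and produce H\"older-continuous output with the claimed two-order gain precisely because of the td index $-m+\N_0$ and the parabolic scaling near td encoded by \eqref{d-coord}. The refinement into $\sqrt{t}\,\ho$, and the one-order-plus-$\sqrt{t}$ smoothing $\hoone\to\sqrt{t}\,\hho$ of the second statement, follow by extracting the extra power of the front-face defining function $\rho=\sqrt{t}$ that the leading ff order $m-1$ supplies.

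The passage to arbitrary $k$ I would handle by commutation. The heat semigroup commutes with its generator, $\Delta^j e^{t\Delta}=e^{t\Delta}\Delta^j$ on the relevant domains, and the higher H\"older spaces in \eqref{norms} are built purely from the blocks $\Delta^j(\cdot)$. If $u\in\hhok$, then $\Delta^j u\in\ho$ for every $j\leq k$, so applying the base case to each $\Delta^j u$ gives $\Delta^j(e^{t\Delta}u)=e^{t\Delta}\Delta^j u\in\hho$ for $j\leq k$, which is exactly membership in $\hhokk$. Tracking the preserved front-face factor through the same commutation yields the $\sqrt{t}\,\hhok$ component, and the odd-order statement is identical with $\hoone$ in place of $\ho$.

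The hard part will be the relaxation from the stronger metric hypotheses of \cite{BV} to feasibility. The key is that Theorem \ref{heat-expansion} continues to hold for feasible metrics, so the perturbation $h$ with $|h|_{g_0}=O(x^2)$ enters the kernel only through terms of higher order at ff and td that leave the leading indices --- which govern convergence --- untouched. Following \cite{Ricci-Vertman}, I would locate each point in the \cite{BV} argument where the stronger decay of $h$ was used and verify that the $O(x^2)$ bound still suffices; the delicate issue is confirming that the difference-quotient estimates across ff and td retain their convergence once the edge model is only approximately, rather than exactly, a product near the singularity. Taking $\A$ small --- below the gap between the leading and subleading indices at ff and td --- then ensures these estimates close just as in the rigid case.
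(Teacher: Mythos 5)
There is a genuine gap, and it sits exactly where the theorem is hardest. You have misidentified which hypothesis of \cite{BV} is being relaxed and where its loss is felt. The condition dropped here is not the decay rate $|h|_{g_0}=O(x^2)$ of the metric perturbation --- that is still part of feasibility (condition (1) of Definition \ref{def-feasible}) --- but the additional assumption \cite[Definition 1.3 (iii)]{BV}, whose role in \cite{BV} was to guarantee that the coefficient of $\rho^2_\rf$ in the heat kernel expansion at the \emph{right face} of $\mathscr{M}^2_h$ is a harmonic function on the fibre $F$. Under mere feasibility, Theorem \ref{heat-expansion} still holds, but this harmonicity fails. Consequently your plan for the base case --- lift the integral to $\mathscr{M}^2_h$ and estimate each defining quantity of the $\hho$ norm, in particular $\Delta(e^{t\Delta}u)$, by re-running the \cite{BV} estimates --- breaks down precisely at the term $\Delta e^{t\Delta} u$: the direct estimate of this quantity in \cite{BV} uses the harmonicity of the order-two right-face coefficient decisively. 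Your proposed remedies (checking that $O(x^2)$ decay of $h$ suffices, and taking $\A$ below the index gaps at ff and td) do not address this, because the obstruction lives at rf, not at ff or td, and is about the structure of a subleading coefficient rather than the leading index sets.

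The paper's proof supplies the missing device: for $u \in \ho$ one writes, using the heat equation, $\Delta e^{t\Delta} u = \partial_t e^{t\Delta} u - u$, and estimates $\partial_t e^{t\Delta}u$ instead. The lift of $\partial_t$ to $\mathscr{M}^2_h$ lowers the front face order of the kernel by two but leaves its right-face expansion untouched, so the H\"older estimate of $\partial_t e^{t\Delta}u$ goes through exactly as in \cite[Theorem 3.2]{BV} without ever invoking the refined order-$\rho^2_\rf$ asymptotics. The rest of your outline is consistent with the paper: the second mapping property does carry over ad verbatim; the remaining components of the second-order norm (the $x^{-1}\mathcal{V}_e$ pieces and the $\sqrt{t}$ gain) follow as in \cite{BV}; and the extension to general $k$ via commuting $\Delta^j$ through the semigroup (equivalently, uniqueness for the heat equation) is exactly the paper's closing step. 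But without the identity $\Delta e^{t\Delta} u = \partial_t e^{t\Delta} u - u$, or an equivalent substitute, your argument cannot close the $\Delta u$-component of the $\hho$ norm --- and producing that component under the weakened hypotheses is the entire content of the theorem beyond \cite{BV}.
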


\begin{proof}
The second mapping property follows from \cite{BV} for our setting of 
feasible edge metrics ad verbatim without further arguments. For the first
statement we only provide a brief sketch of the argument for $k=0$. 
Since in contrast to \cite{BV} we do not impose the condition of \cite[Definition 1.3 (iii)]{BV} here, we need to 
point out how this assumption is actually used in the proof of \cite[Theorem 3.2]{BV}.
That condition has been employed in order to refine
the statement on the asymptotic behaviour of the heat kernel at the right boundary 
face of $\mathscr{M}^2_h$. \cite[Proposition 2.4]{BV} asserts that the coefficients
of $\rho^0_\rf$ and $\rho^2_\rf$ in the heat kernel expansion at rf is are fact harmonic functions on $F$.
Without the assumption \cite[Definition 1.3 (iii)]{BV}, the asymptotic 
expansion of the heat kernel established in \cite[Proposition 2.4]{BV} still holds, however
the coefficient of $\rho^2_\rf$ need not be harmonic on $F$ anymore.\medskip
 
In order to make our argument precise, let us introduce the following notation
for the H\"older spaces, which we will use in this proof only. We want to distinguish between 
second order H\"older spaces where we do or do not require regularity under an application of the Laplace
Beltrami operator by an additional subscript. We write
 \begin{equation*}
\begin{split}
&\hhod (M\times [0,T]) = \{u\in \ho \mid \{ x^{-1}\mathcal{V}_e^2, 
x^{-1}\mathcal{V}_e, \sqrt{t} \partial_t, \textup{id}\} \, u \in \ho \}, \\
&\hho (M\times [0,T]) = \{u\in \hhod \mid \Delta u, \partial_t u  \in \ho \}.
\end{split}
\end{equation*}
An examination of the argument in the proof of \cite[Theorem 3.2]{BV} shows that 
even if the coefficient of $\rho^2_\rf$ is not harmonic on $F$, the following mapping 
properties still follow ad verbatim
\begin{align*}
e^{t\Delta}: \ho (M\times [0,T]) \to 
\left( \hhod \cap \sqrt{t} \, \ho \right) (M\times [0,T]).
\end{align*}
In fact the estimates may be considerably simplified using 
\cite[Corollary 3.2]{Ricci-Vertman}. Only the mapping of $\ho$ into $\hhod$ requires some care 
and as written down in \cite{BV} uses the assumption \cite[Definition 1.3 (i) and (iii)]{BV} decisively. 
Without that assumption we may proceed as follows. We write for any $u \in \ho$ using the heat equation
\begin{align*}
\Delta e^{t\Delta} u = \partial_t e^{t\Delta} u - u.
\end{align*}
This allows us to avoid discussing the intricate right face asymptotics of the heat kernel to the order of $\rho_\rf^2$, 
since the lift of $\partial_t$ to the heat space $\mathscr{M}^2_h$ applied to the heat kernel, lowers its front face
behaviour by an order of two, but does not affect the expansion of the heat kernel at rf. 
Now, we may estimate the H\"older norm of $\partial_t \, e^{t\Delta} u$ exactly as before in 
\cite[Theorem 3.2]{BV}, and conclude that $e^{t\Delta}: \ho \to \hho$ is indeed bounded.
Thus, \cite[Theorem 3.2]{BV} is still satisfied under the milder feasibility assumptions imposed here.
Extension to general $k\in \N_0$ is straightforward by uniqueness of solutions to the heat equation.
\end{proof}

The present analysis actually requires some additional mapping property
of the heat operator, which are proved along the lines of Theorem \ref{mapping}.
It concerns the action of the heat 
operator as a convolution operator on the space $\mathscr{B}$ of bounded functions on $\widetilde{M}$.
If we discuss $\mathcal{C}^{1+\A}_{\textup{ie}}$ regularity of $e^{t\Delta} \mathscr{B}$, 
following the estimates in \cite{BV}, we apply one less derivative to the heat kernel and hence
are left with additional $\rho_\ff \rho_\td$ in the estimates, which we can easily convert into 
$d_M^\A$ factors. Consequently, the heat kernel admits following mapping properties in addition
to the one established in \cite{BV} as stated in Theorem \ref{mapping}.

\begin{prop}\label{mapping2}
Denote by $\mathscr{B}$ the space of bounded functions on $\widetilde{M}$. Then
the heat operator acting by convolution in time is a bounded mapping
\begin{equation*}
\begin{split}
e^{t\Delta}: 
\mathscr{B} \to \mathcal{C}^{1+\A}_{\textup{ie}}. 
\end{split}
\end{equation*}
\end{prop}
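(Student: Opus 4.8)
The plan is to follow the proof of Theorem \ref{mapping} from \cite{BV} essentially verbatim, keeping careful track of the fact that membership in $\hoone$ requires controlling only the single family of derivatives $x^{-1}\mathcal{V}_e$ together with the base $\ho$-norm, rather than the second order data $\Delta$ and $\partial_t$ that enter $\hho$. By Definition \ref{holder-def} and the norm \eqref{norms}, it suffices to bound, for $u\in\mathscr{B}$,
\begin{equation*}
\|e^{t\Delta}u\|_{\A} + \sum_{X\in\mathcal{V}_e}\|x^{-1}X\,e^{t\Delta}u\|_{\A} \le C\,\|u\|_{\infty},
\end{equation*}
where $e^{t\Delta}u$ is the Duhamel convolution \eqref{eqn:hk-on-functions}. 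As in \cite{BV}, I would localize the convolution kernel to neighborhoods of the faces of the heat space $\mathscr{M}^2_h$, pass to the projective coordinates \eqref{top-coord}, \eqref{right-coord}, \eqref{d-coord}, and work throughout with the polyhomogeneous expansion of $\beta^*H$ and the index sets recorded in Theorem \ref{heat-expansion}.

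\textbf{Sup norm.} First I would dispose of the $\ho$-part. The pointwise bound
\begin{equation*}
|e^{t\Delta}u(t,p)| \le \|u\|_{\infty}\int_0^t\int_M H(t-\wt,p,\widetilde{p})\,\dv(\widetilde{p})\,d\wt,
\end{equation*}
together with the uniform bound $\int_M H(s,p,\cdot)\,\dv\le 1$ in $(s,p)\in[0,T]\times\widetilde{M}$ --- which follows from sub-Markovianity of the Friedrichs heat semigroup, equivalently from the index sets of Theorem \ref{heat-expansion} --- yields $\|e^{t\Delta}u\|_{\infty}\le T\|u\|_{\infty}$. The Hölder seminorm of $e^{t\Delta}u$ itself is controlled in the same way as, but more easily than, the $x^{-1}X$-derivative below, since there no derivative at all is applied to the kernel.

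\textbf{Main estimate.} The heart of the matter is the bound on $\|x^{-1}X\,e^{t\Delta}u\|_{\A}$ for $X\in\mathcal{V}_e$. The crucial bookkeeping is the one indicated before the statement: applying a single edge derivative $x^{-1}X$ to the lifted kernel $\beta^*H$ lowers its order at the front face ff and the temporal diagonal td by one unit \emph{less} than the second order operators $\Delta$ or $(x^{-1}\mathcal{V}_e)^2$ that enter the $\hho$-estimates of \cite{BV}. Thus, relative to the borderline kernel estimates of \cite{BV}, the pointwise bounds on $x^{-1}X\,\beta^*H$ carry an additional factor $\rho_\ff\,\rho_\td$. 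To control the Hölder seminorm I would, for nearby points $p,p'$ and times $t,t'$, estimate the kernel difference $x^{-1}X\,H(t-\wt,p,\widetilde{p})-x^{-1}X\,H(t'-\wt,p',\widetilde{p})$ by a mean-value argument in the projective coordinates, splitting the surplus as $\rho_\ff^{\A}\rho_\td^{\A}\cdot\rho_\ff^{1-\A}\rho_\td^{1-\A}$: the first factor is spent producing the Hölder weight $d_M(p,p')^{\A}+|t-t'|^{\A/2}$, while the second restores exactly the integrability present in the corresponding \cite{BV} estimate. Integrating the resulting kernel against the constant $\|u\|_{\infty}$ over each coordinate patch and over the time convolution then gives $\|x^{-1}X\,e^{t\Delta}u\|_{\A}\le C\|u\|_{\infty}$.

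\textbf{Main obstacle.} The essential difference from \cite{BV}, and the point requiring the most care, is that the input is merely bounded rather than Hölder continuous, so none of the regularity may be borrowed from cancellations of the form $u(\widetilde{p})-u(p)$ as in the $\ho\to\hho$ argument; all of the gained regularity must come from the smoothing of the kernel. The compensating mechanism is precisely the extra $\rho_\ff\rho_\td$ decay obtained from differentiating the kernel once instead of twice, and the delicate step is to verify that this surplus is exactly enough --- after $\rho_\ff^{\A}\rho_\td^{\A}$ is converted into the Hölder factor --- to keep every integral convergent without a borderline logarithmic loss. Once this is checked patch by patch near ff and td (and trivially, by the infinite vanishing at tf and the non-negative index sets at rf and lf in Theorem \ref{heat-expansion}, in the remaining regions), the estimate assembles exactly as in the proof of Theorem \ref{mapping}.
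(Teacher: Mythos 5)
Your proposal is correct and takes essentially the same route as the paper: the paper's own justification (in the paragraph preceding the proposition) is exactly that one applies one fewer derivative to the heat kernel than in the $\hho$-estimates of \cite{BV}, leaving a surplus factor $\rho_\ff\rho_\td$ in the kernel bounds, which is then converted into $d_M^{\A}$ H\"older factors so that no regularity of the input is needed. Your additional bookkeeping (the split $\rho_\ff^{\A}\rho_\td^{\A}\cdot\rho_\ff^{1-\A}\rho_\td^{1-\A}$ and the sup-norm bound via stochastic completeness) simply fleshes out the same argument the paper leaves as a sketch.
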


\subsection{Invariance of the edge structure under $\hho$ 
conformal transformations}\label{invariance} 

An arbitrary conformal transformation of an incomplete edge metric can destroy the edge structure we hope to preserve.  Fortunately, as we now prove, conformal transformations in $\hho$ preserve the edge structure.

\begin{lemma}  Suppose $g$ is a feasible incomplete edge metric, and $u$ is a positive function lying in $\hho(M)$.  Then $u^{\frac{4}{m-2}} g$ is an incomplete edge metric.
\end{lemma}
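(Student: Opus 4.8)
The plan is to realize the conformally changed metric $\widetilde g := u^{\frac{4}{m-2}} g$ in the warped cone-bundle form of Definition \ref{d-edge}, pushing all of the distortion caused by the conformal factor into an admissible perturbation that vanishes at $x = 0$. Write $w := u^{\frac{4}{m-2}}$. Since $u$ is positive and, as an element of $\ho$, continuous on the compact manifold $\widetilde{M}$, it is bounded above and away from zero; by the chain rule $w$ then inherits the first order edge regularity of $u$, namely that $\partial_x w$, $\partial_{y_i} w$ and $x^{-1}\partial_{z_j} w$ are bounded near $\partial M$. This is exactly the statement $x^{-1}\mathcal{V}_e w \in \ho$, which follows by writing $x^{-1}\mathcal{V}_e w = \tfrac{4}{m-2}\, u^{\frac{4}{m-2}-1}\,(x^{-1}\mathcal{V}_e u)$ and using that $\ho$ is closed under multiplication by bounded H\"older functions. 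First I would record these consequences, which supply the only analytic input needed.

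The crucial structural step is to observe that $u$, and hence $w$, is fibrewise constant on the boundary. Indeed, membership $x^{-1}\mathcal{V}_e u \in \ho$ forces $x^{-1}\partial_{z_j} u$ to be bounded, i.e.\ $\partial_{z_j} u = O(x)$ as $x \to 0$, so that $u|_{\partial M}$ is independent of the fibre variable $z$ and descends to a function on the base, $u|_{\partial M} = \phi^* u_B$. Consequently $w|_{\partial M} = \phi^* w_B$ with $w_B := u_B^{\frac{4}{m-2}}$, and integrating the bounded radial derivative gives the rate $w - \phi^* w_B = O(x)$. This is the heart of the matter: a conformal factor that genuinely depended on $z$ at the edge would warp the conical fibres and destroy the edge structure, and it is precisely the $x^{-1}\mathcal{V}_e$-control built into the $\hho$ norm that rules this out.

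With this in hand I would construct the new rigid model explicitly. Set $\widehat{x} := \sqrt{\phi^* w_B}\, x$, which is a radial defining function for $\partial M$ because $\phi^* w_B$ is pinched between positive constants, and put $\widetilde g^B := w_B\, g^B$, $\widetilde g^F := g^F$ and $\widetilde\phi := \phi$, so that the candidate model is $\widetilde g_0 = d\widehat{x}^2 + \widehat{x}^2\,\widetilde g^F + \widetilde\phi^* \widetilde g^B$. Writing $\widetilde g = w\,dx^2 + w x^2 g^F + w\,\phi^* g^B + w h$, substituting $w = \phi^* w_B + (w - \phi^* w_B)$ and $\widehat{x}^2 = (\phi^* w_B)\,x^2$, and expanding $d\widehat{x}^2$, each rigid term equals its $\widetilde g_0$-counterpart up to an error of relative size $O(x)$, while $w h$ inherits $|w h|_{g_0} = w\,|h|_{g_0} \to 0$ from the hypothesis on $h$. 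To see that these errors assemble into an admissible perturbation $\widetilde h := \widetilde g - \widetilde g_0$ with $|\widetilde h|_{\widetilde g_0} \to 0$, I would measure them using the $g_0$-weights $|dx|_{g_0} \sim 1$, $|dy_i|_{g_0} \sim 1$, $|dz_j|_{g_0} \sim x^{-1}$, noting that $g_0$ and $\widetilde g_0$ are uniformly comparable: the radial error $(w - \phi^* w_B)\,dx^2$ and the cross terms $x\,dx\,d(\phi^* w_B)$ coming from $d\widehat{x}$ are $O(x)$, the $dy\,dy$ remainder from $d\widehat{x}$ is $O(x^2)$, the fibre error $(w - \phi^* w_B)\,x^2 g^F$ has $g_0$-norm $O(x)$ since $|x^2 g^F|_{g_0} \sim 1$, and the base error $(w - \phi^* w_B)\,\phi^* g^B$ is $O(x)$. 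A short bookkeeping then shows $|\widetilde h|_{g_0} = O(x)$.

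Finally I would verify that the model data fit Definition \ref{d-edge}: the fibration $\widetilde\phi = \phi$ is unchanged, $\widetilde g^F = g^F$ still restricts to the same isospectral family of fibre metrics, and $\widetilde g^B = w_B\, g^B$ is a positive definite metric on $B$. I expect the main obstacle to be conceptual rather than computational and to lie entirely in the second step, namely recognizing that $\hho$-regularity forces fibrewise constancy of $u|_{\partial M}$, which is what preserves the conical structure; the perturbation estimate afterwards is routine edge bookkeeping. The one caveat I would flag is regularity: since $u_B$ is only as regular as $u$ permits, the model data $\widetilde g^B$ and $\widehat{x}$ are in general merely H\"older rather than smooth, so the conclusion is that $\widetilde g$ is an incomplete edge metric in the sense of Definition \ref{d-edge} with this inherited regularity. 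In particular we do not, and should not expect to, conclude that $\widetilde g$ remains feasible, consistent with the remark in the introduction.
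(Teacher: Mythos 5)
Your proof is correct and follows essentially the same route as the paper: both exploit that the edge H\"older regularity forces $u|_{\partial M}$ to be fibrewise constant, giving $u = u_0(y) + O(x)$, and then absorb the boundary value into a rescaled radial function $\widehat{x} = \sqrt{\phi^* w_B}\,x$ (the paper's $\xt = u_0^{\frac{2}{m-2}} x$) together with the conformally changed base metric $w_B\, g^B$, leaving only $O(x)$ perturbation terms. Your write-up is simply more explicit about the error bookkeeping and about the caveat that the resulting model data inherit only the H\"older regularity of $u$.
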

\begin{proof}
Since $u \in \hho$ we may apply the mean value theorem. Due to the fact 
that an element of $\ho$ must be independent of $z$ at $x=0$, we obtain 
an expansion as $x\to 0$
\[ u(x,y,z) = u_0(y) + O(x). \]
Now we substitute $\xt = u_0^{\frac{2}{m-2}} x$ and can expand as $x\to 0$
\begin{align*}
u^{\frac{4}{m-2}} (dx^2 + \phi^*g^B +  x^2 g^F) &= u_0^{\frac{4}{m-2}} dx^2 + 
\phi^* (u_0^{\frac{4}{m-2}} g^B) + u_0^{\frac{4}{m-2}} x^2 g^F + O(x)
\\&= d\xt^2 + \phi^*( u_0^{\frac{4}{m-2}} g^B ) + \xt^2 g^F + O(x).
\end{align*}
The key point here is that up to a conformal transformation of the base metric, 
the leading term of the metric has the same rigid edge structure. 
\end{proof}

Note that this lemma does not assert that $g' = u^{\frac{4}{m-2}} g$ remains feasible.  
At present we do not know if conditions \textit{(i)} of Definition \ref{def-feasible} hold for $g'$.

\subsection{H\"older regularity of the initial scalar curvature} 

We conclude this introductory section with a few remarks about one of the hypothesis of Theorem \ref{BV}.  

There is a natural obstruction for the existence of a constant scalar curvature metric with bounded conformal factor on an edge manifold.  To see this, assume for simplicity $b=0$ and consider the short-time solution $g=e^{2v} \ginit$
of the edge Yamabe flow, as obtained in \cite{BV} on a (finite) time interval $[0,T)$. For any $\lambda >0$, consider the rescaled metric $\lambda^2 g$, 
which is again a solution of the edge Yamabe flow. Denoting local coordinates on $F$ by $z$, near the singularity
$$
\exp \left(2v\left(\frac{t}{\lambda^2}, \frac{x}{\lambda}, z\right)\right)\left(dx^2 + x^2 g^F\right)
$$
solves the Yamabe flow equation for $t\in [0,\lambda T)$ and $(x,z)\in (0,\lambda) \times F$.
Taking $\lambda$ to infinity, we obtain a solution $e^{u(0,0,z)} (dx^2 + x^2 g^F)$ on the 
infinite cone $(0,\infty) \times N$ for all times $t>0$. Since it is time-independent, its 
scalar curvature is zero by the unrescaled Yamabe flow equation. Continuity of $u$ up to the singularity
implies that $u(0,0,z)$ is in fact a constant. Hence 
$$
\textup{scal} \left(dx^2 + x^2 g^F\right) = \frac{\textup{scal} (g^F) - n(n-1)}{x^2} = 0.
$$
So short-time existence of the edge Yamabe flow with conformal 
factors continuous up to the singularity already requires that the scalar 
curvature of the fibres $(F,g^F)$ must be given by $n(n-1)$. \medskip

Thus, in addition to feasibility of the edge metric, we also require 
$$
\scal(\ginit) \in \hho (M),
$$
and consequently, the scalar curvature of the fibres $F$ is $n(n-1)$. \medskip

\section{Maximum principles for spaces with incomplete edges} \label{sec:maxprin}

Our long-time existence argument rests strongly on maximum principles adapted to incomplete edges.  
As pointed out by Jeffres in the case of incomplete conic metrics \cite{Jeffres}, one must be careful about 
the possibility that extrema of solutions to the heat equation occur at the conic points.  
As we will explain, we may rule this out since we only consider solutions in our H\"older spaces.  \medskip

We begin with an adaptation of the classical principle.  This maximum principle was developed 
jointly with Mazzeo. We always denote the Laplace Beltrami operator of a feasible incomplete edge metric $g$ by $\Delta$.

\begin{theorem} \label{maxprin} Suppose $u \in \hho$ attains its minimum (respectively maximum) at 
some point $p\in \widetilde{M}$. In particular, we do not require $p$ to lie in the interior of the edge space. 
Then $(\Delta u)(p) \geq 0$ $($respectively $(\Delta u)(p) \leq 0 )$.
\end{theorem}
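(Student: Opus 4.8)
The plan is to treat separately the case where the extremum point $p$ lies in the open interior $M$ and the genuinely singular case $p\in\partial M$, reducing throughout to the minimum case (the maximum case follows by replacing $u$ with $-u$). If $p\in M$, the argument is classical: away from $\partial M$ the operator $\Delta$ is uniformly elliptic with smooth coefficients, and since $\Delta u\in\ho$ is H\"older continuous, interior Schauder estimates upgrade $u$ to $\mathcal{C}^{2,\A}$ near $p$; at an interior minimum the gradient vanishes and the Hessian is positive semidefinite, so $(\Delta u)(p)=\operatorname{tr}_g\operatorname{Hess}u(p)\geq 0$.

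The substance is the edge case $p=(0,y_0,z_0)\in\partial M$, where barriers are classically invoked and the full Hessian is \emph{not} controlled by the $\hho$ norm. Here the key device I would use, replacing the barrier construction, is fibre averaging combined with a one-dimensional comparison in the radial variable. First I record two structural facts: since $u\in\ho$ and $d_M$ degenerates in the fibre directions as $x\to 0$, both $u$ and $\Delta u$ are independent of $z$ at $x=0$, so the minimum value is $m_0=u(p)=u_0(y_0)$ with $u_0(y):=u(0,y,\cdot)$, and $(\Delta u)(p)$ equals the fibre average of $\Delta u$ at $(0,y_0)$. Next I average the model edge Laplacian $\Delta=\partial_x^2+\tfrac{n}{x}\partial_x+\tfrac{1}{x^2}\Delta_F+\Delta_B+(\text{l.o.t.})$ over the fibre $F$: since $\int_F\Delta_F u\,dV_{g^F}=0$, the fibre average of $\Delta u$ at $(x,y_0)$ equals $\ubar_{xx}+\tfrac{n}{x}\ubar_x+\Delta_B\ubar$ up to terms vanishing as $x\to 0$, where $\ubar(x,y)$ denotes the fibre average of $u$.

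The core one-dimensional argument, which I would carry out first in the purely conic model $b=0$ where the base term is absent, runs as follows. Set $r(x):=\ubar_{xx}(x,y_0)+\tfrac{n}{x}\ubar_x(x,y_0)$; interior regularity makes $\ubar(\cdot,y_0)$ smooth for $x>0$ and $r(x)\to(\Delta u)(p)$ as $x\to0$. Writing $r=x^{-n}\tfrac{d}{dx}(x^n\ubar_x)$ and integrating, using that $\ubar_x$ is bounded (since $x^{-1}(x\partial_x)u=\partial_x u\in\ho$) so the boundary term $x^n\ubar_x\to0$, I obtain $\ubar_x(x,y_0)=x^{-n}\int_0^x s^n r(s)\,ds\sim\tfrac{(\Delta u)(p)}{n+1}x$, and once more $\ubar(x,y_0)-m_0\sim\tfrac{(\Delta u)(p)}{2(n+1)}x^2$. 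Since $p$ is a global minimum, $\ubar(x,y_0)\geq m_0$ for small $x>0$, forcing the leading coefficient to be nonnegative, i.e. $(\Delta u)(p)\geq 0$. For the general edge I run the identical computation at fixed $y_0$ with $r$ the radial part only, obtaining $(\Delta u)(p)\geq\Delta_B u_0(y_0)$; the remaining base contribution is nonnegative by the classical second-derivative test, as $u_0$ attains its minimum over $B$ at $y_0$.

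The hard part will be precisely this last point: because $\hho$ controls $\Delta u$ rather than the full Hessian, I must justify that the edge restriction $u_0$ is regular enough (at least $\mathcal{C}^2$ near $y_0$) for $\Delta_B u_0(y_0)$ to be defined and signed, and that the lower-order and fibration cross terms genuinely vanish in the limit $x\to0$. I expect to extract the needed base regularity from the degenerate-elliptic equation $\ubar_{xx}+\tfrac{n}{x}\ubar_x+\Delta_B\ubar\in\ho$ satisfied by the fibre average, using the edge elliptic theory underlying the heat-kernel construction of Theorem \ref{heat-expansion}; the error terms are controlled by feasibility, which guarantees $|h|_{g_0}=O(x^2)$ and hence that the corrections to the model Laplacian are subleading at the edge.
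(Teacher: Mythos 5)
Your interior case and your rigid conic case ($b=0$) are essentially correct, and they are in fact the paper's own argument in different clothing: your identity $r = x^{-n}\tfrac{d}{dx}\left(x^{n}\ubar_x\right)$, integrated against $s^n\,ds$ with the boundary term killed by boundedness of $\partial_x u$, is precisely the paper's integration by parts of $\int_{\{x\le\varepsilon\}}\Delta u$ over the cone (justified there by the Friedrichs-domain property of $\hho$), and in both arguments the sign is then forced by minimality of $p$ through the fibre average $\ubar$. The paper phrases this as a contradiction (if $(\Delta u)(p)<0$ then $\ubar$ is strictly decreasing near $0$, contradicting $\ubar(\varepsilon)\ge u(p)$); you phrase it as a two-fold integration with asymptotics; the content is the same.

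The genuine gap is your treatment of the general edge case $b\ge 1$. Your splitting $(\Delta u)(p) \ge \Delta_B u_0(y_0) \ge 0$ needs two facts that membership in $\hho$ simply does not provide: that $\Delta_B\ubar(x,y_0)$ has a limit as $x\to 0$, and that this limit equals $\Delta_B u_0(y_0)$ with $u_0$ of class $\mathcal{C}^2$ near $y_0$. The space $\hho$ controls $\Delta u$, $x^{-1}\mathcal{V}_e u$ and nothing more at second order; in particular $\partial_y^2 u$ is not controlled, and this is exactly the failure of full-Hessian elliptic regularity that motivates these hybrid spaces in the first place. From $u\in\hho$ one only gets first-order control of the edge trace (via $\partial_y u = x^{-1}(x\partial_y)u \in \ho$), so $u_0$ is $\mathcal{C}^{1,\alpha}$ on $B$ but not, by any quotable statement, $\mathcal{C}^{2}$; and even granting that, the interchange of $x\to 0$ with two $y$-derivatives is a separate uniformity statement you have not established. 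Your proposed repair --- extracting edge-trace regularity from the degenerate equation satisfied by $\ubar$ via ``the edge elliptic theory underlying the heat kernel'' --- is precisely the hard part of the elliptic edge calculus (partial expansions and trace regularity at the edge), not a lemma available in this paper, so as written the step fails. The paper's proof shows this machinery is unnecessary: instead of freezing $y_0$ and separating off $\Delta_B$, it introduces polar coordinates \emph{jointly} in $(x,y)$ centered at $p$, writing $g = ds^2 + s^2\,h(\omega,z;d\omega,dz)$ up to higher-order terms with $\omega\in\mathbb{S}^b$, so that the edge point becomes a conic point whose cross-sections already contain the base directions; the conic averaging argument then applies verbatim, and second $y$-derivatives of $u$ at the edge are never needed. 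Replacing your radial/base splitting by this joint polar-coordinate reduction would close the gap and complete your proof.
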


\begin{proof} If $p$ is an interior point, then this result is classical.  Otherwise $p$ lies on the edge, 
and in coordinates $(x,y,z)$ near the edge we may assume $p = (0,0,0)$.  We now outline the 
argument if $g$ is a rigid conic metric of the form
\[ g|_{\U} = dx^2 + x^2 g^F, \]
and then we describe how to adapt the proof in the case of general conic metrics and general 
incomplete edges.  Thus we begin by assuming $p$ is a conic point. \medskip

Recall that since $u \in \hho$ we have that $x^{-1}\mathcal{V}_e u$ 
and $\Delta u$ lie in $\ho$, and in particular these 
derivatives extend continuously to the boundary $\partial M$ 
of the resolution $\widetilde{M}$ with their restrictions
to $x=0$ being constant. In case of conical singularities 
this means that $x^{-1}\mathcal{V}_e u$ and $\Delta u$ are continuous 
and constant at $p$. \medskip

Suppose that $p$ is a minimum for $u$.  If by way of contradiction $\Delta u(p) < 0$, then 
by continuity there exists $\varepsilon > 0$ sufficiently small such that $\Delta u < 0$ on 
$B_\varepsilon=\{ x \leq \varepsilon \}$.  
It has been observed in \cite{BV} that $\hho$ lies inside the domain for the Friedrichs
self-adjoint extension of $\Delta$. Hence for any $u_1,u_2\in \hho$ integration by parts does not yield
boundary terms coming from the singularity (recall, $n = \dim F$)
$$
\langle \Delta u_1, u_2\rangle_{L^2(B_\varepsilon)} = \langle u_1, \Delta u_2\rangle_{L^2(B_\varepsilon)} + 
\int_F \varepsilon^n ((\partial_x u_1) u_2 - u_1 (\partial_x u_2))|_{x=\varepsilon} \, \textup{dvol} (g^F).
$$
Since $u$ and $1$ both lie in $\hho$, integrating by parts we find 
\begin{equation} \label{int-by-parts}
 0 > \int_{B_\varepsilon} \Delta u \; \dv = \int_F 
 \varepsilon^n (\partial_x u)|_{x=\varepsilon} \, \textup{dvol} (g^F).
\end{equation}
Denote the rescaled cross section $\{ x = \varepsilon \}\equiv (F, \varepsilon^2 g^F)$ by $A_\varepsilon$. 
The volume form of $A_\varepsilon$ is given by $\varepsilon^n \textup{dvol} (g^F)$.
Now consider the average value, $\ubar(\varepsilon)$, of $u$ over $A_\varepsilon$,
\[ \ubar(\varepsilon) = \frac{1}{\textup{vol} (A_\varepsilon)} \int_{A_\varepsilon} u(\varepsilon, z) dA_{\varepsilon} = 
\frac{1}{\textup{vol} (A_1)} \int_{A_1} u(\varepsilon, z) dA_1. \]
Since $u$ is continuous up to $p$ and constant at $p$, clearly, $\lim_{\varepsilon \to 0^+} \ubar(\varepsilon) = u(p)$.
Since $p$ is a minimum of $u$, we deduce that
$\ubar(\varepsilon) \geq u(p)$ for all $\varepsilon>0$ sufficiently small.  Consequently, the (right) 
derivative $\left. \frac{d}{d \varepsilon}\right|_{\vep = 0^+} \ubar \geq 0$.  However,
\begin{equation} \label{diff-ubar}
\frac{d}{d \varepsilon} \ubar(\varepsilon) = \frac{1}{\textup{vol} (A_1)} \int_{A_1} \partial_x u(\varepsilon, z) dA_1 < 0, 
\end{equation}
for all $\varepsilon$ sufficiently small, by the inequality \eqref{int-by-parts}.  But this 
implies $\ubar$ is decreasing, which is a contradiction. So we conclude $(\Delta u)(p) \geq 0$.
\medskip

When $g$ is no longer rigid but merely a feasible conic metric $g=g_0+ e$, 
with the leading term $g_0$ being rigid, $g_0|_{\U} = dx^2 + x^2 g^F$, and 
the higher order term $e$ being a symmetric tensor on $M$ that satisfies $|e|_{g_0} = O(x^2)$,
we may still write 
$$
g = dx^2 + x^2 h(x,z;dx,dz),
$$ 
and adapt the argument above as follows.
We will again restrict to $A_\varepsilon$ level sets.  
Note that $\partial_x$ is not quite a unit normal for this hypersurface, in fact, 
$|\partial_x|_{g} = 1 + O(\varepsilon^2)$. In what follows, we relabel $h$ to include 
these error terms. We set with respect to the relabeled tensor $h$, we write
\[ 
d A_{\varepsilon} := \varepsilon^{n} \sqrt{ \det(h(\varepsilon,z)) } \, \textup{dvol} (g^F) = \varepsilon^{n} 
\sqrt{\frac{\det(h(\varepsilon,z))}{\det(h(1,z))}} \, dA_1,
\]
so that upon integration by parts as in equation \eqref{int-by-parts} we find again
\begin{equation} \label{int-by-parts2} 
0 > \int_{A_\varepsilon} (\partial_x u)(\varepsilon, z) \; 
d A_{\varepsilon} = \int_{\{x = 1\}} (\partial_x u) (\varepsilon, z) \cdot \varepsilon^{n} 
\sqrt{\frac{\det(h(\varepsilon,z))}{\det(h(1,z))}} \, dA_1.
\end{equation}
Now consider the average value of $u$ once more
\begin{align*}
\ubar(\varepsilon) := \frac{1}{\int_{A_1} \varepsilon^{n} \sqrt{ \frac{\det(h(\varepsilon,z))}{\det(h(1,z))}} \, dA_1 } 
\int_{A_1} u(\varepsilon, z) \varepsilon^{n} \sqrt{\frac{\det(h(\varepsilon,z))}{\det(h(1,z))}} \, dA_1 \\
= \frac{1}{\int_{A_1} \sqrt{\frac{\det(h(\varepsilon,z))}{\det(h(1,z))}} \, dA_1 } 
\int_{A_1} u(\varepsilon, z) \sqrt{\frac{\det(h(\varepsilon,z))}{\det(h(1,z))}} \, dA_1. 
\end{align*}
As before we wish to differentiate $\ubar(\varepsilon)$ in $\varepsilon$ and evaluate the derivative
at zero. Note that the dependence on $\varepsilon$ is now more complicated.  We write $q(\varepsilon) 
:= \sqrt{\frac{\det(h(\varepsilon,z))}{\det(h(1,z))}}$.  Given an expansion for $h$, 
we may write $h(\varepsilon, z) = h_0(z) + h_1(z) \varepsilon + O(\varepsilon^2)$, as $\varepsilon \to 0^+$, 
where $h_0(z), h_1(z)$ are smooth $2$-tensors. Then
\[ q(\varepsilon) := 1 + H_1 \varepsilon + O(\varepsilon^2), \ \textup{as } \, \vep \to 0^+. \]
This expansion is differentiable in $\varepsilon$ and hence we find as $\vep \to 0^+$
\begin{align*}
\frac{d}{d\varepsilon} \ubar(\varepsilon) &= \frac{1}{\int_{A_1} q \, dA_1 } \int_{A_1} (\partial_x u)(\varepsilon, z) q \, dA_1 
\\ &+ \frac{1}{\int_{A_1} q \, dA_1 } \int_{A_1} u(\varepsilon, z) (H_1 + O(\varepsilon) ) \, dA_1 -\ubar(\varepsilon) \, \frac{\int_{A_1} (H_1 + O(\varepsilon)) \, dA_1 }{\int_{A_1} q \, dA_1 }  \\
&= \frac{1}{\int_{A_1} q \, dA_1 } \int_{A_1} (\partial_x u)(\varepsilon, z) q \, dA_1 + \frac{1}{\int_{A_1} q \, dA_1 } \left\{ \int_{A_1} \left( u(\varepsilon,z) - \ubar(\varepsilon) \right)  H_1(z) \, dA_1 \right\} 
\\ &\hspace{12.5cm}+ O(\varepsilon).
\end{align*}
Note that as $\varepsilon \to 0^+$, the second integral approaches zero by dominated convergence, 
and so applying \eqref{int-by-parts2} we find that $\left. \frac{d}{d \varepsilon}\right|_{\vep =0^+} \ubar \geq 0$ 
as before, yielding the contradiction. \medskip

Finally we consider the general case of an incomplete edge which is a bundle of cones over 
the base manifold $B$.  Assume the minimum $p$ of $u$ occurs at a point on the edge 
which we label $(0,0,0)$ in $(x,y,z)$ coordinates.  Introducing polar coordinates 
from $p$ in $x$ and $y$ allows us to write up to higher order terms
\[ g = ds^2 + s^2 h(\omega, z; d\omega, dz), \]
where $\omega$ lies on the unit sphere $\mathbb{S}^b$.  
This now reduces the edge case to the conic case.
\end{proof}

We conclude the section by formulating some parabolic maximum principles.  
The first principle is classical, see for example \cite{Evans}.

\begin{lemma}[Classical maximum principle]
Suppose that $u \in \hho(M \times [0,T])$ satisfies
\[ \partial_t u \leq \Delta u, u(p,0) = 0. \]
Then $\max_{\widetilde{M} \times [0,T]} u$ occurs on the parabolic boundary of $\widetilde{M} \times (0,T]$,
which is a union of $\{x=0\} \times (0,T)$ and $\widetilde{M} \times \{t = 0\}$.
\end{lemma}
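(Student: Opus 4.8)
The plan is to run the classical perturbation argument for subsolutions of the heat equation, using the edge maximum principle of Theorem \ref{maxprin} as the substitute for the elementary fact that the Laplacian is nonpositive at an interior spatial maximum. Since $\widetilde{M} \times [0,T]$ is compact and every element of $\hho$ is continuous up to the edge, $u$ attains its maximum at some point $(p_0,t_0)$; the content of the lemma is to exclude the possibility that this maximum escapes the parabolic boundary. In fact, because Theorem \ref{maxprin} controls the Laplacian even at edge points, the argument will localize the maximum to the initial slice $\widetilde{M} \times \{0\}$, which is stronger than and implies the stated conclusion.

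First I would introduce, for $\vep > 0$, the function $u_\vep := u - \vep t$. Because $\vep t$ is spatially constant we have $\Delta u_\vep = \Delta u$ and $\partial_t u_\vep = \partial_t u - \vep$, so the hypothesis $\partial_t u \le \Delta u$ upgrades to the strict inequality $\partial_t u_\vep \le \Delta u_\vep - \vep < \Delta u_\vep$ holding pointwise on all of $\widetilde{M} \times [0,T]$; this is a genuine pointwise statement because $\partial_t u, \Delta u \in \ho$ are continuous up to the edge. Let $(p_0,t_0)$ be a point where $u_\vep$ attains its maximum, and suppose for contradiction that $t_0 > 0$.

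Next I would test this maximum in space and in time separately. Restricting to the slice $t = t_0$, the function $u_\vep(\cdot,t_0)$ differs from $u(\cdot,t_0)$ by a constant, so it inherits spatial $\hho(M)$ regularity from membership of $u$ in the parabolic space $\hho(M \times [0,T])$; since $p_0$ is a maximum of $u_\vep(\cdot,t_0)$ over the whole of $\widetilde{M}$, Theorem \ref{maxprin} gives $\Delta u_\vep(p_0,t_0) \le 0$ regardless of whether $p_0$ lies in the interior or on the edge. On the other hand, $t_0 \in (0,T]$ is a maximum of the map $t \mapsto u_\vep(p_0,t)$, so the corresponding (possibly one-sided, at $t_0 = T$) derivative satisfies $\partial_t u_\vep(p_0,t_0) \ge 0$. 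Combining these with the strict subsolution inequality yields $0 \le \partial_t u_\vep(p_0,t_0) < \Delta u_\vep(p_0,t_0) \le 0$, a contradiction. Hence every maximum of $u_\vep$ occurs at $t_0 = 0$.

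Finally, writing $u = u_\vep + \vep t \le u_\vep + \vep T$ and using $u_\vep = u$ on $\widetilde{M} \times \{0\}$, I obtain $\max_{\widetilde{M} \times [0,T]} u \le \max_{\widetilde{M} \times \{0\}} u + \vep T$; letting $\vep \to 0^+$ gives $\max_{\widetilde{M} \times [0,T]} u \le \max_{\widetilde{M} \times \{0\}} u$, which in particular places the maximum on the parabolic boundary. I expect the only genuinely nontrivial step to be the invocation of Theorem \ref{maxprin} at an edge maximum: in the smooth interior this spatial test is elementary, and it is precisely the singular edge --- where, as noted by Jeffres \cite{Jeffres}, extrema could a priori escape control --- that forces us to replace it by the edge maximum principle. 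The remaining subtlety, worth one line, is to confirm that the time slice $u_\vep(\cdot,t_0)$ really does inherit the spatial $\hho(M)$ regularity needed to apply Theorem \ref{maxprin}, which follows from restricting the parabolic Hölder estimates to a fixed time.
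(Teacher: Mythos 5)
Your proof is correct, and the two points you flag as delicate do check out: the time slice $u_\vep(\cdot,t_0)$ inherits spatial $\hho(M)$ regularity from $u \in \hho(M\times[0,T])$, so Theorem \ref{maxprin} applies at $p_0$ whether or not it lies on the edge, and the resulting contradiction $0 \le \partial_t u_\vep(p_0,t_0) < \Delta u_\vep(p_0,t_0) \le 0$ is valid. However, your route is structurally different from the paper's. The paper's proof of this lemma never invokes Theorem \ref{maxprin}: it supposes the maximum occurs in $\widetilde{M}\times(0,T]$ \emph{away from the edge} $\{x=0\}$, and uses only the elementary interior facts $\partial_t u(p_0) \ge 0$ and $\Delta u(p_0) \le 0$ at such a maximum, together with the same perturbation $u_\vep = u - \vep t$ to pass from strict to non-strict inequality. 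Consequently the paper's conclusion is genuinely weaker — the maximum may a priori still sit on $\{x=0\}\times(0,T)$ — and the edge maximum principle enters only afterwards, in Corollary \ref{JeffresMaxPrinciple}, where Theorem \ref{maxprin} is applied exactly as you apply it here to exclude edge maxima at positive times. Your version front-loads Theorem \ref{maxprin} into the lemma and thereby proves the stronger statement that the maximum lies on $\widetilde{M}\times\{t=0\}$; this makes the corollary immediate and, notably, yields it under the non-strict hypothesis $\partial_t u \le \Delta u$, whereas the paper assumes strict inequality there. The trade-off is one of modularity: the paper keeps the lemma purely classical and isolates all dependence on the singular edge analysis in the corollary, while your argument is globally more economical but makes the lemma itself rest on Theorem \ref{maxprin}.
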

\begin{proof}
Suppose by way of contradiction that the maximum occurs somewhere in $\widetilde{M} \times (0,T]$, 
away from the edge $\{x=0\}$.  Let $p_0 = (x_0, y_0, z_0, t_0)$, $x_0 > 0$ be a point where 
the maximum of $u$ is attained.  We may reduce to the classical maximum principle technique.  
We consider two cases.  If $\partial_t u < \Delta u$ (strict inequality), then evaluating this expression at $p_0$ gives
\[ 0 \leq \partial_t u(p_0)- \Delta u(p_0)< 0, \]
where the first inequality follows from $\partial_t u(p_0) \geq 0$ and $\Delta u(p_0) \leq 0$ at a maximum point.  
This is a contradiction and, consequently, the maximum occurs on the parabolic boundary of $\widetilde{M} \times (0,T]$.
In the second case of $\partial_t u \leq \Delta u$, set $u_{\varepsilon} = u - t\varepsilon$.  Now
\[ \partial_t u_{\varepsilon} = \partial_t u - \varepsilon \leq \Delta u_{\varepsilon} - \varepsilon < 0, \]
and so by the previous argument, the maximum of $u_{\varepsilon}$ occurs on the parabolic 
boundary.  We now let $\varepsilon \to 0$.
\end{proof}

As a corollary we also have a parabolic maximum principle. 

\begin{cor} \label{JeffresMaxPrinciple}
Suppose $u \in \hho(M \times [0,T])$ satisfies
\[ \partial_t u < \Delta u, u(p,0) = 0. \]
Then $u \leq 0$ on $M \times [0,T]$.
\end{cor}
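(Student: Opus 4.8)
The plan is to combine the foregoing classical maximum principle with the edge maximum principle of Theorem \ref{maxprin} in order to rule out a positive maximum on the singular part of the parabolic boundary. Since $\partial_t u < \Delta u$ certainly implies $\partial_t u \leq \Delta u$, the preceding lemma applies and forces $\max_{\widetilde{M} \times [0,T]} u$ to be attained on the parabolic boundary $\{x=0\} \times (0,T) \cup \widetilde{M} \times \{t = 0\}$. On the initial slice $u \equiv 0$ by hypothesis, so if the desired conclusion $u \leq 0$ were to fail, the positive maximum value would have to be attained at some space-time point $p_0 = (p,t_0)$ with $p$ lying on the edge $\partial M$ and $t_0 \in (0,T]$. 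Thus the only configuration left to exclude is an edge maximum at a strictly positive time.

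Next I would extract the two sign conditions at $p_0$ that collide with the strict differential inequality. Because $u \in \hho$, the derivatives $\partial_t u$ and $\Delta u$ lie in $\ho$ and hence extend continuously up to $\partial M$, so both quantities are genuinely defined at the edge point $p_0$. Freezing the time variable at $t_0$, the point $p$ is a spatial maximum of $u(\cdot,t_0)$ over $\widetilde{M}$ situated on the edge; Theorem \ref{maxprin} then gives $(\Delta u)(p_0) \leq 0$. Freezing instead the spatial variable at $p$, the function $t \mapsto u(p,t)$ attains its maximum at $t_0 > 0$, and since $\partial_t u$ is continuous on $[0,T]$ the value $(\partial_t u)(p_0) \geq 0$ whether $t_0$ is interior (where the derivative vanishes) or $t_0 = T$ (where the left derivative is nonnegative).

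Combining these two facts yields $(\partial_t u)(p_0) - (\Delta u)(p_0) \geq 0$, which flatly contradicts the hypothesis $(\partial_t u)(p_0) < (\Delta u)(p_0)$. Consequently no positive maximum can occur, and therefore $u \leq 0$ on all of $M \times [0,T]$, as claimed.

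The substantive ingredient here is the application of Theorem \ref{maxprin} at a point of the singular edge: this is exactly where the H\"older-regularity hypothesis $u \in \hho$ does its work, guaranteeing both that $\Delta u$ and $\partial_t u$ are well-defined up to $\partial M$ and that the edge sign condition $(\Delta u)(p_0) \leq 0$ holds without any barrier construction. Given that theorem, the remainder is routine extremum bookkeeping, so I do not expect any further obstacle; the one point deserving care is simply verifying that the localized maximum lands at a time $t_0 > 0$ (forced by $u|_{t=0} = 0$) so that the time-derivative inequality is available.
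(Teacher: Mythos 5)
Your proposal is correct and follows essentially the same route as the paper's own proof: invoke the classical lemma to push the maximum to the parabolic boundary, dispose of the initial slice via $u(\cdot,0)=0$, and then use Theorem \ref{maxprin} at an edge maximum to get $(\Delta u)(p_0)\leq 0$, which together with $(\partial_t u)(p_0)\geq 0$ contradicts the strict inequality $\partial_t u < \Delta u$. The only (harmless) difference is that you also treat the case $t_0 = T$ explicitly, which is slightly more careful than the paper's statement of the parabolic boundary.
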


\begin{proof}
By the previous lemma, the maximum of $u$ occurs at the 
parabolic boundary of $\widetilde{M} \times (0,T]$,
which consists of $\{x=0\} \times (0,T)$ and $\widetilde{M} \times \{t = 0\}$.
If the maximum lies on $\widetilde{M} \times \{t = 0\}$, the statement follows
from the initial condition $u(p,0) = 0$. If the maximum lies on $\{x=0\} \times (0,T)$, 
then by Theorem \ref{maxprin} we have $\partial_t u < 0$ at the maximum point. 
This is a contradiction.
\end{proof}

Note that a straightforward adaption of these maximum principles allows us to handle negative zeroth order terms.
\begin{cor} \label{PMP2} Let $c \in \hospace(M \times[0,T])$ be a negative function.
Suppose $u \in \hho(M \times [0,T])$ satisfies
\[ \partial_t u < \Delta u + cu, u(p,0) = 0. \]
Then $u \leq 0$ on $M \times [0,T]$.
\end{cor}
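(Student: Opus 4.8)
The plan is to follow the same two-step structure used to prove Corollary \ref{JeffresMaxPrinciple}, adding only the observation that the sign hypothesis $c<0$ forces the zeroth order term to be strictly negative at a positive maximum. Since $u$ is continuous on the compact space $\widetilde{M}\times[0,T]$, it attains a maximum; if that maximum is nonpositive we are done, so I would argue by contradiction and suppose $\max u =: M^* > 0$, attained at some point $p_0=(x_0,y_0,z_0,t_0)$. The initial condition $u(\cdot,0)=0<M^*$ forces $t_0>0$, so $p_0$ lies in $\widetilde{M}\times(0,T]$.

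Next I would record the three sign facts available at $p_0$. Because $t_0>0$ and $p_0$ maximizes $u$ over the closed time interval, the time derivative satisfies $\partial_t u(p_0)\geq 0$: it vanishes if $t_0\in(0,T)$ and is nonnegative at the endpoint $t_0=T$, and it is well-defined and continuous since $u\in\hho$ means $\partial_t u\in\ho$. For the Laplacian I would split into cases exactly as in Theorem \ref{maxprin}. If $x_0>0$ the point is spatially interior and $\Delta u(p_0)\leq 0$ by the classical second-derivative test applied to the slice $u(\cdot,t_0)$. If $x_0=0$ the maximum lies on the edge, and applying Theorem \ref{maxprin} to the spatial function $u(\cdot,t_0)$, which attains its maximum over $\widetilde{M}$ at the edge point, again yields $\Delta u(p_0)\leq 0$; here I would use that $u\in\hho$ guarantees $\Delta u$ extends continuously to $\partial M$, so evaluating $\Delta u$ at the edge point is legitimate. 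Finally, since $c<0$ and $u(p_0)=M^*>0$, the zeroth order term satisfies $c(p_0)u(p_0)<0$.

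Combining these gives $\Delta u(p_0)+c(p_0)u(p_0)<0\leq \partial_t u(p_0)$, which directly contradicts the strict differential inequality $\partial_t u<\Delta u+cu$ evaluated at $p_0$. Hence no positive maximum can occur and $u\leq 0$ throughout $M\times[0,T]$.

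The argument is essentially routine given the earlier machinery; the only genuinely nonclassical ingredient is the edge case of the Laplacian sign, which is supplied verbatim by Theorem \ref{maxprin} together with the continuity of $\Delta u$ up to $\partial M$ for functions in $\hho$. Should one instead wish to preserve the two-lemma packaging of Corollary \ref{JeffresMaxPrinciple}, the main point requiring care would be the zeroth order term: the classical maximum principle lemma as stated contains no such term, so one must re-run its proof with $\Delta u+cu$ in place of $\Delta u$, using $cu\leq 0$ once $u\geq 0$. I regard the direct argument above as cleaner precisely because the strict inequality together with the strictly negative $c$ leave no borderline case to perturb away with an $\varepsilon t$ correction.
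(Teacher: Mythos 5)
Your proof is correct and is essentially the adaptation the paper has in mind: the paper gives no separate proof of Corollary \ref{PMP2}, stating only that it follows by a ``straightforward adaption'' of the preceding lemma and Corollary \ref{JeffresMaxPrinciple}, and your argument realizes exactly that, using the same key ingredients (Theorem \ref{maxprin} for the sign of $\Delta u$ at an edge maximum, the classical second-derivative test in the interior, $\partial_t u \geq 0$ at a maximum with $t_0>0$, and $cu<0$ at a positive maximum). Collapsing the paper's two-lemma structure into a single contradiction at a positive maximum is a cosmetic, not substantive, difference, and your closing remark correctly identifies why no $\varepsilon t$ perturbation is needed here.
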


We conclude this section briefly comparing the maximum principle given here and the 
one developed in \cite{Jeffres}.  Using suitable barrier functions as introduced by 
Jeffres, one may prove the corollary above in the conic case without the restriction 
to $\hho$.  The authors spent considerable time trying to adapt 
Jeffres's techniques to obtain $L^{\infty}$ estimates for the Yamabe flow.  We were 
unable to discover how to use these barrier functions to obtain $L^{\infty}$ 
estimates using the differential inequality technique given in \cite{Ye}.

\section{Parabolic regularity of a heat-type equation} \label{parabolic-section}

The purpose of this section is to prove H\"older regularity for solutions of a certain quasilinear parabolic equation that arises in the study of the Yamabe flow.  To put this in context, recall that in \cite{BV} we proved parabolic Schauder-type estimates in modified H\"older spaces for a linear heat equation of the form
\begin{equation} \label{eqn:heat-equation}
\left\{ \begin{array}{ll} \partial_{t} u - \Delta u &= f, \\
                            u(x,0) &= u_0(x). \end{array} \right. 
\end{equation}
In classical parabolic PDE theory, one expect such estimates to hold for uniformly parabolic operators with H\"older coefficients.  As it does not appear to be immediate to pass from estimates of   equation \eqref{eqn:heat-equation} to more general equations with time-dependent coefficients, we state and prove the following more general existence and regularity result. Our work is modeled after the very detailed argument given in \cite{EM}.

\begin{prop} \label{prop:mainreg}
Let $a(p,t)$ be a positive function bounded from below that lies in $\hoone(M \times [0,T])$, 
and consider the operator $P = \partial_t + a \Delta$.  Then there exists a right parametrix $Q$ for $P$ such that
\[ Q: \ho(M \times [0,T]) \to \hho(M \times [0,T]) \]
is a bounded map and if $f \in \ho(M \times [0,T])$, then $u = Qf$ is a solution to the equation
\[ (\partial_t + a \Delta) u = f, \quad u(p,0) = 0. \]
\end{prop}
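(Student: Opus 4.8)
The plan is to construct the parametrix $Q$ by a perturbative argument built on the known mapping properties of the constant-coefficient heat operator $e^{t\Delta}$ from Theorem \ref{mapping}. The idea is that $P = \partial_t + a\Delta$ differs from the model operator $\partial_t - \Delta$ by the factor $a$ multiplying the Laplacian, and since $a$ is positive and bounded below we may treat $P$ as a variable-coefficient perturbation. Writing $P = \partial_t - \Delta + (a+1)\Delta$ is not quite the right splitting because the perturbation $(a+1)\Delta$ is second order and thus not small; instead I would freeze coefficients and exploit that $e^{t\Delta}$ already gains two orders of regularity (it maps $\ho \to \hho$), so composing it against a first-order-in-$a$ error should produce a convergent Neumann series.

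\textbf{Key steps, in order.} First I would let $G = e^{t\Delta}$ denote the convolution heat operator of the fixed initial metric, which by Theorem \ref{mapping} (case $k=0$) maps $\ho(M\times[0,T]) \to \hho(M\times[0,T])$ boundedly and solves $(\partial_t - \Delta)Gf = f$ with $Gf(p,0)=0$. I would then compute the error operator
\[
P\,G = (\partial_t + a\Delta)\,G = (\partial_t - \Delta)\,G + (a+1)\,\Delta\, G = \textup{id} + (a+1)\,\Delta\, G.
\]
Writing $E := (a+1)\,\Delta\,G$, the task reduces to inverting $\textup{id}+E$ on $\ho$. The crucial point is that $E$ is \emph{not} small in operator norm on a fixed time interval, so I would instead establish smallness on a short time interval $[0,T_0]$: because $Gf$ vanishes at $t=0$ and the Schauder estimate of Theorem \ref{mapping} comes with a factor that decays as $T_0 \to 0$ (the $\sqrt{t}$-gain in the target space $\sqrt{t}\,\ho$ translates into a norm bound of size $O(\sqrt{T_0})$ for the map $\ho \to \ho$ after applying $\Delta$ and multiplying by the $\hoone$ function $a+1$), one arranges $\|E\|_{\ho\to\ho} < 1$ on $[0,T_0]$. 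The Neumann series $(\textup{id}+E)^{-1} = \sum_{k\geq 0}(-E)^k$ then converges, and I would set $Q := G\,(\textup{id}+E)^{-1}$, so that $PQ = \textup{id}$ on $[0,T_0]$. Finally I would extend from $[0,T_0]$ to all of $[0,T]$ by iterating the construction on successive subintervals $[T_0, 2T_0], \dots$, using the solution at the right endpoint of one interval as the initial datum for the next, and checking that the boundedness $Q:\ho\to\hho$ and the initial condition $u(p,0)=0$ are preserved throughout.

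\textbf{The main obstacle} I anticipate is controlling the error operator $E = (a+1)\Delta G$ as a bounded map $\ho \to \ho$ with small norm on short time, because this requires that the composition $\Delta G$ followed by multiplication by the merely-$\hoone$ coefficient $a+1$ stays inside $\ho$. Multiplication by a $\hoone$ function certainly preserves $\ho$ (it is an algebra under multiplication with the lower H\"older index), but one must verify that $\Delta G f \in \ho$ with a norm that genuinely decays as $T_0 \to 0$; this is exactly where the refined mapping property $G:\ho \to (\hho \cap \sqrt{t}\,\ho)$ of Theorem \ref{mapping} is needed, rather than the bare statement $G:\ho\to\hho$. A secondary delicate point is that $a$ lies only in $\hoone$, not in $\hho$, so one cannot naively differentiate it twice; the perturbative scheme above is designed precisely to avoid applying $\Delta$ to $a$, since all second-order regularity is supplied by the model operator $G$. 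Verifying that $Qf$ genuinely satisfies the pointwise initial condition $u(p,0)=0$ and lies in the domain of $\Delta$ at each stage, so that the integration-by-parts and maximum-principle machinery of \S\ref{sec:maxprin} applies, completes the argument.
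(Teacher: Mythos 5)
Your construction has a genuine gap at its central step: the claim that the error operator $E=(a+1)\,\Delta G$ can be given operator norm $<1$ on $\ho$ by shrinking the time interval. Two independent things go wrong. First, the prefactor is never small: since $a$ is positive and bounded below, $a+1\geq 1$ (and with the parabolic sign convention, where the model is $\partial_t-\Delta$, the prefactor becomes $a-1$, which is still of order one since nothing forces $a$ to be near $1$). So all smallness would have to come from $\Delta G$ itself. Second, $\|\Delta G\|_{\ho\to\ho}$ does \emph{not} tend to zero as $T_0\to 0$: parabolic scaling $u_\lambda(p,t)=u(\lambda p,\lambda^2 t)$ shows that the top-order Schauder constant is essentially independent of the length of the time interval. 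Here you have misread Theorem \ref{mapping}: the refined mapping property says $Gf\in\hho\cap\sqrt{t}\,\ho$, i.e.\ the $\sqrt{t}$-decay holds for $Gf$ with \emph{no} derivative gain, while the two-derivative gain comes with no time decay; the target is not $\sqrt{t}\,\hho$, and applying $\Delta$ destroys the $\sqrt{t}$ factor. What survives is only that $w:=\Delta Gf$ vanishes pointwise at $t=0$, so $\|w\|_\infty\leq T_0^{\alpha/2}[w]_\alpha$ is small --- but the H\"older \emph{seminorm} $[w]_\alpha$ is merely bounded by $C\|f\|_\alpha$, and it is exactly this seminorm that dominates the operator norm of $E$. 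Your Neumann series therefore need not converge, no matter how small $T_0$ is.

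This is precisely why the paper does not use a single global model operator but localizes and freezes coefficients: with cutoffs $\varphi_{j,p},\psi_{j,p}$ supported on sets of size $\vep$ and the heat kernel $H_p$ of the operator with coefficient frozen at $(p,0)$, the top-order error is $E^0f=\psi\,(a-a(p,0))\,\Delta H_p[\varphi f]$ (Lemma \ref{lemma:main-frozen-est}). The bounded-but-not-small seminorm $[\Delta H_p[\varphi f]]_\alpha$ then always appears multiplied by $\|a-a(p,0)\|_{L^\infty(\textup{supp}\,\psi)}\leq C\|a\|_{1+\alpha}\,(\vep+T^{\alpha/2})$, which \emph{is} small after choosing first $\vep$ and then $T$ small; this estimate is exactly where the hypothesis $a\in\hoone$ enters, via the mean value theorem in the edge directions. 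Note that your scheme never uses $a\in\hoone$ at all, which is a symptom of the missing mechanism: in a frozen-coefficient argument, smallness comes from the oscillation of the coefficient over small supports, not from short-time decay of the solution operator. The remaining pieces of your outline (commutator errors, the interior region, the Neumann series, and the iteration from $[0,T_0]$ to $[0,T]$ using the endpoint value as new initial data) do match the paper's structure, but they cannot be run until the smallness of the top-order error is produced by localization. Incidentally, your own opening remark --- that a splitting whose perturbation is a full second-order operator with $O(1)$ coefficient ``is not quite the right splitting'' --- is correct; the construction you then give is exactly that splitting.
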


The proof of this proposition uses the method of frozen coefficients after suitable localization in a neighborhood of a boundary point in conjunction with parabolic Schauder estimates.  We provide a fairly complete detail of this construction, but as the argument seems to be standard the reader  interested only in the estimate specific to incomplete edges may consult Lemma \ref{lemma:main-frozen-est}.  Sections \ref{sec:reg-prelim}--\ref{sec:reg-parametrix} are self-contained and are not cited in the remainder of the paper. 

\subsection{Preliminaries to the global analysis} \label{sec:reg-prelim}

In this discussion, as in the introduction, we will use $M$ for the 
original incomplete edge manifold and $\widetilde{M}$ for its resolution. 
By a slight abuse of notation we denote the boundary of $\widetilde{M}$ by $\partial M$.
We assume a preliminary diffeomorphism has been performed so 
that $x$ is the radial distance to the boundary and we have a  collar 
neighborhood of the boundary denoted by $U_{R} = \{ x \leq R \}$. 
\medskip 

We will need to localize our argument.  To do this, we introduce a 
special covering of $\widetilde{M}$.  First we introduce a model 
half-cube centered at the origin in coordinates $(x,y,z)$
\begin{align*}
C(r) = [0,r) \times (-r,r)^b \times (-r,r)^n \subset [0,\infty)_x \times 
\bR^b_y \times \bR^n_z. 
\end{align*}
Given a point $p \in \partial M$, we may find a coordinate chart mapping $C(1)$
to some open subset $W_p \subset \widetilde{M}$ of $p$ of the form $\phi_p: C(1) \rightarrow W_p$.
Compactness of the boundary implies that we may find finitely many such charts 
$\{W_i, p_i, \phi_i\}_{i=1}^N$ that will cover a collar neighborhood of the form 
$U_{R}$ for $R$ sufficiently small.  We add the open subset 
$W_0 = \widetilde{M} \backslash \{ x \leq R/2 \}$ to this covering for a 
cover of $\widetilde{M}$ that we refer to hereafter as the \emph{reference covering}.
\medskip

The H\"older norms in $\ho$ and $\hho$ may be equivalently defined
using a partition of unity $\{\chi_i\}$ subordinate to the reference covering $\{W_i\}$. 
More precisely, taking local H\"older norms $\|(\chi_i f)\circ \phi_i \|_{\hospace(C(1))}$
for any function $f: W_i \to \bR$, we can define a global H\"older norm
\[ \| f \|_{\hospace(M)}^{(W_i,\phi_i)} = \sum_{i=0}^N \| (\chi_i f)\circ \phi_i \|_{\hospace(C(1))}, \]
with the obvious extensions to higher-order H\"older spaces. These norms are equivalent
to the H\"older norms defined in \eqref{norms} and moreover, given any other
choice of reference covering and subordinate 
partition of unity ${(W'_i, p'_i, \phi'_i)}$, the H\"older norms  defined with respect to 
$\{W_i, p_i, \phi_i\}$ and ${(W_i', p_i', \phi_i')}$ are equivalent as well. 
We fix a reference cover for the remainder of the section. \medskip

In constructing a boundary parametrix, we will need to localize in small domains near the edge, 
which requires some care, since the arbitrary cutoff functions in $\widetilde{M}$ need not 
need not respect the conic identification at $x=0$ and hence need not define functions on 
the edge manifold $M$ (or equivalently in the incomplete edge H\"older spaces).  Let $\sigma: \bR^+_s \to [0,1]$ 
($s$ is the coordinate on $\R^+$) be a smooth cutoff function with $\sigma \equiv 1$ for $|s| \leq 1/2$ and $\sigma \equiv 0$ for $s \geq 1$.  For any point $p = (0,0,0)$ in local coordinates $(x,y,z)$ at the edge, let $s$ denote polar coordinates in $x$ and $y$ from $p$.  Now set $\phi(x,y,z) = \sigma(x)\sigma(\|y\|)$ and $\psi(x,y,z) = \sigma(\frac{x}{2})\sigma(\frac{\|y\|}{2})$.  Now $\phi, \psi \in C^\infty(\widetilde{M})$, are both identically $1$ near $p$ and moreover 
$\psi \equiv 1$ on $\supp \phi$. These functions pass to the quotient 
on the fibers and hence are well-defined on
$\overline{M}$ as well.  In particular $\phi, \psi \in \hho(M)$ since these functions are constant near the edge. For any point 
$p = (0, y_0, z_0) \in \partial M$ in coordinates of a reference chart $W_j$, 
define for some $\vep \in (0,1)$ to be specified
\begin{align*} 
\widetilde{\varphi}_{j,p} = \varphi\left( \frac{x}{\vep}, \frac{y-y_0}{\vep}, z-z_0 \right),
\quad \widetilde{\psi}_{j,p} = \psi\left( \frac{x}{\vep}, \frac{y-y_0}{\vep}, z-z_0 \right).
\end{align*}

Now consider for any $j=1,...,N$ and any $\vep \in (0,1)$ the lattice 
$L_{j,\vep}$ of points in the coordinate chart $W_j$ of the form 
$\{ (0,\vep w) \mid w \in \bZ^{b+n}\}$.  Every point on $\partial M$
lies in the support of at most a fixed number of functions 
$\{ \psi_{j,p}: j = 1,\cdots, N, p \in L_{j,\vep}\}$.  Set
\begin{align*} 
\varphi_{j,p} = \frac{ \widetilde{\varphi}_{j,p}}{ \sum_k \sum_{q \in L_{j,\vep}} \widetilde{\varphi}_{k,q}},
\quad \psi_{j,p} = \frac{ \widetilde{\psi}_{j,p}}{ \sum_k \sum_{q \in L_{j,\vep}} \widetilde{\psi}_{k,q}}.
\end{align*}
Now $\varphi_{j,p}$ and $\psi_{j,p}$ are partitions of unity for any choice of $\vep$, and the functions
\begin{align*} 
\Phi := \sum_j \sum_{p \in L_{j,\vep}} \varphi_{j,p} \equiv 1, 
\quad \Psi := \sum_j \sum_{p \in L_{j,\vep}} \psi_{j,p} \equiv 1,
\end{align*}
are identically $1$ in a neighborhood of the edge.  

\subsubsection{An auxilary lemma}

\begin{lemma} \label{lemma:aux-smoothing}
Suppose that $\psi, \phi \in \hospace(M)$ are smooth functions with compact support,
and $\psi$ is compactly supported away from the singular edge neighborhood.  
Let $H$ be the heat operator of the Laplace Beltrami operator.  Then
\[ E_0 := \psi H \phi: \mathcal{C}^{k+\A}_{\textup{ie}}(M \times [0,T]) \rightarrow 
\mathcal{C}^{k+2+\A}_{\textup{ie}}(M \times [0,T]), \]
where $\phi$ and $\psi$ act by multiplication
and the operator norm $\| E_0 \| \rightarrow 0$ as $T \rightarrow 0^+$.
\end{lemma}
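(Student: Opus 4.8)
The plan is to reduce the assertion to a classical interior parabolic estimate and then to read off the decay in $T$ from the short-time structure of the heat kernel.

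First I would use decisively that $\psi$ is supported away from the edge. Since $\psi$ acts on the left by multiplication, the range of $E_0 = \psi H \phi$ is contained in the functions supported in a fixed compact set $K \Subset M$ of the smooth interior, where the radial function satisfies $x \geq c > 0$. On such a set the edge vector fields $x^{-1}\mathcal{V}_e$ span, with coefficients bounded above and below, the same module as the ordinary derivatives $\{\partial_x,\partial_y,\partial_z\}$, the distance $d_M$ is comparable to the Euclidean distance, and $\Delta$ is uniformly elliptic. Hence the incomplete edge H\"older norms restricted to $K\times[0,T]$ are equivalent to the ordinary parabolic H\"older norms of the same order, and it suffices to estimate $\psi\,H(\phi f)$ in the classical sense on $K\times[0,T]$.

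Next I would establish the gain of two derivatives. Set $u := H(\phi f)$; by Duhamel's principle $u$ solves $(\partial_t - \Delta)u = \phi f$ with $u(\cdot,0)=0$, and $\phi f \in \hok(M\times[0,T])$ with $\|\phi f\| \leq C\|f\|$, because multiplication by the smooth compactly supported function $\phi$ is bounded on these hybrid spaces. Theorem \ref{mapping} then gives $u \in \hokk(M\times[0,T])$, and multiplication by the smooth compactly supported $\psi$ preserves $\hokk$, so that $E_0 : \hok \to \hokk$ is a bounded map.

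The heart of the matter is the decay $\|E_0\| \to 0$ as $T\to 0^+$, and here I would extract the finer information contained in Theorem \ref{mapping}. Its second mapping property records, beyond the gain of two derivatives, an explicit weight $\sqrt{t}$: the heat operator sends $\mathcal{C}^{2k+1+\A}_{\textup{ie}}$ into $\sqrt{t}\,\mathcal{C}^{2(k+1)+\A}_{\textup{ie}}$. Feeding $\phi f$ into this refinement, via the continuous embedding of $\hok$ into the neighbouring odd-order scale, converts the weight $\sqrt{t}$ over the interval $[0,T]$ into an overall constant bounded by $\sqrt{T}$, so that $\|E_0 f\| \leq C\sqrt{T}\,\|f\|$. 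Equivalently, on the heat space $\mathscr{M}^2_h$ the convolution in time over $[0,t]\subseteq[0,T]$ leaves an additional power of the front face defining function $\rho_\ff \sim \sqrt{t}$ in the estimates of Theorem \ref{mapping} and Proposition \ref{mapping2}; over the support of $\psi$ this surplus factor is not needed to produce the H\"older quotients $d_M^\A$ and may instead be spent to bound the operator norm by $C\sqrt{T}$. In the complementary regime where $\supp\psi$ and $\supp\phi$ are separated, the operator is even smoothing of infinite order, with norm $O(T^\infty)$, by the infinite order vanishing of $\beta^*H$ at the temporal face tf in Theorem \ref{heat-expansion}.

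I expect the main obstacle to be the rigorous extraction of this factor $\sqrt{T}$ at the level of the top, gained H\"older seminorm, that is, propagating the surplus $\rho_\ff$ weight through the heat space estimates while simultaneously generating the $d_M^\A$ difference quotients. This is carried out exactly as in the proof of Theorem \ref{mapping}, by applying one fewer derivative to the lifted heat kernel, precisely the device already exploited in Proposition \ref{mapping2}.
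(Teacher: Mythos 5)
Your reduction to the interior and your boundedness argument are reasonable, but they form a genuinely different route from the paper's, and your key quantitative step fails. For boundedness, composing multiplication by $\phi$, the convolution mapping property of Theorem \ref{mapping}, and multiplication by $\psi$ does give $E_0:\hok\to\hokk$ for even $k$ (modulo the multiplication lemmas you gloss over); note however that Theorem \ref{mapping} produces a two-order gain only from the even-order spaces $\hhok$, so odd $k$ is not covered by your argument. The paper does not argue by composition at all: it observes that, because $\psi$ is supported away from the edge, the lifted kernel $\beta^*(\psi H\phi)$ is compactly supported away from the front and right faces of $\mathscr{M}^2_h$, hence of the form $(\rho_{\mathrm{ff}}\rho_{\mathrm{rf}}\rho_{\mathrm{tf}})^\infty\rho_{\mathrm{td}}^{-m}G$, and then runs the classical ($I_3$-type) Schauder estimates directly on this kernel, so that the gain of two orders and the surplus factors $(\rho_{\mathrm{ff}}\rho_{\mathrm{td}})=O(\sqrt t)$ giving smallness come out of one and the same estimate.

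The gap is in your derivation of $\|E_0\|\to0$. The embedding of $\hok$ into a ``neighbouring odd-order scale'' necessarily goes downward (e.g.\ $\hho\subset\hoone$), and the weighted property of Theorem \ref{mapping} is $e^{t\Delta}:\mathcal{C}^{2j+1+\A}_{\textup{ie}}\to\sqrt t\,\mathcal{C}^{2(j+1)+\A}_{\textup{ie}}$, a gain of \emph{one} order, not two. Composing therefore gives $E_0:\hok\to\sqrt t\,\hok$, i.e.\ a factor $\sqrt T$ with no gain of derivatives, while the unweighted property gives the gain with no decay; no composition of these black-box statements, and no interpolation, produces both at once. Indeed the estimate you assert, $\|E_0f\|_{k+2+\A}\le C\sqrt T\,\|f\|_{k+\A}$, is false whenever $\psi\phi\not\equiv0$: since $u=H(\phi f)$ solves $(\partial_t-\Delta)u=\phi f$ with $u(\cdot,0)=0$, one has $\partial_t(\psi u)|_{t=0}=\psi\phi\,f|_{t=0}$, so taking $f\equiv1$ yields $\|E_0f\|_{k+2+\A}\ge\sup|\psi\phi|$ for every $T$. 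Smallness is only available in norms below the top order --- which is exactly what your embedding computation proves, and what the paper's applications actually use (in Lemma \ref{lemma:main-frozen-est} and Proposition \ref{prop:short-time-reg} the relevant cutoffs have disjoint supports and the decay is invoked for $E^1,E^2$ as operators on $\ho$) --- or under that disjoint-support condition, via the kernel estimate. Your closing ``equivalently, on the heat space\dots'' paragraph is not equivalent to the embedding computation; it is an outline of the paper's actual kernel-level proof, and it is the part that would have to be carried out in detail.
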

\begin{proof} 
The kernel of $E_0$ is not stochastically complete in the sense of ((3.1), \cite{BV}), 
hence the arguments of (Theorem 3.2, \cite{BV}) do not apply here. 
However, since $\psi$ is compactly supported away from the singular edge neighborhood, 
the lift of the kernel $\psi H \phi$ to the blown up heat space $\mathscr{M}^2_h$ is 
compactly supported away from the
front and the right boundary faces. Hence we find
$$
\beta^* (\psi H \phi) = (\rho_\ff \rho_\rf \rho_\tf)^\infty \rho_\td^{-m} G,
$$
with $G$ bounded and vanishing to infinite order as $|(S,U,Z)|\to \infty$. 
This allows us to establish the stated mapping properties by straightforward estimates in local 
projective coordinates near the various corners of the front face in $\mathscr{M}^2_h$ 
along the lines of the estimates for $I_3$ in \cite{BV}. Moreover, $(\rho_\ff \rho_\td)= O(\sqrt{t})$
as $t\to 0$, so that in particular $\| E_0 \| \rightarrow 0$ as $T \rightarrow 0^+$.
Basically, this lemma follows by the classical parabolic Schauder estimates and does not
depend on the specific singular structure of the manifold. 
\end{proof}

\subsection{Construction of a boundary parametrix}

Let $a \in \hoone(M \times [0,T])$, and $f \in \ho(M \times [0,T])$.  
Given the setup above, we may regard any of the functions 
$\varphi_{j,p} f$ as lying in $\ho(M \times [0,T])$ by extending $f$ 
to be zero outside of the support of the $\varphi_{j,p}$.  
We may freeze the coefficient of $a(x,y,z,t)$ at $(p,0)$, and consider the equation
\begin{align*}
(\partial_t + a(p,0) \Delta) \widetilde{u_p} &= \varphi_{j,p} f, \\
\widetilde{u_p}(t = 0) = 0.
\end{align*}
A solution to that initial value problem is given by $\widetilde{u_p} = H_p [ \varphi_{j,p} f ]$, 
where $H_p$ denotes the heat kernel corresponding to the frozen coefficient equation. 
Our previous work \cite{BV} implies after a simple time rescaling that 
$\widetilde{u_p} \in \hho(M \times [0,T])$.  Set $u_p = \psi_{j,p}  
H_p [ \varphi_{j,p} f ]$ and define an initial approximation to a boundary parametrix by
\[ Q_b f = \sum_{j=1}^N \sum_{p \in L_{j,\vep}} \psi_{j,p} H_p [ \varphi_{j,p} f ]. \]
Before we proceed with studying $Q_b$, we make the following observation.

\begin{lemma} \label{lemma:main-frozen-est}
The solution $u_p = \psi_{j,p}  H_p [ \varphi_{j,p} f ]$ satisfies
\begin{align*}
(\partial_t + a \Delta) u_p &= \varphi_{j,p} f + E_{j,p}^0 f + E_{j,p}^1 f,
\end{align*}
where $a\in \hoone(M \times [0,T])$ is the variable coefficient factor and
\begin{enumerate}
\item $E_{j,p}^0: \ho(M \times [0,T]) \to \ho(M \times [0,T])$ is bounded and 
there exists a constant $C>0$ independent of $j, p$ where if $T/\vep^2 < 1$, then
$\|E^0_{j,p} f\|_{\alpha} \leq C (\vep + T^{\alpha/2}) \| \varphi_{j,p} f \|_{\alpha} $,
\item $E_{j,p}^1: \ho(M \times [0,T])\to \ho(M \times [0,T])$ is bounded with 
operator norm satisfying $\lim_{T \to 0} \| E^1\| = 0$. 
\end{enumerate}
\end{lemma}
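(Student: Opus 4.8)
The plan is to prove this frozen-coefficient estimate by a direct computation of the commutator-type error terms that arise when we apply the true operator $P = \partial_t + a\Delta$ to the localized frozen solution $u_p = \psi_{j,p} H_p[\varphi_{j,p} f]$. First I would set $w_p := H_p[\varphi_{j,p} f]$, so that by construction $(\partial_t + a(p,0)\Delta) w_p = \varphi_{j,p} f$ with zero initial data. The task is then to compute $(\partial_t + a\Delta)(\psi_{j,p} w_p)$ and organize the result into the claimed three pieces. Applying the product rule gives
\begin{equation*}
(\partial_t + a\Delta)(\psi_{j,p} w_p) = \psi_{j,p}(\partial_t + a\Delta) w_p + a \left( (\Delta \psi_{j,p}) w_p + 2\langle \nabla \psi_{j,p}, \nabla w_p \rangle \right),
\end{equation*}
since $\partial_t \psi_{j,p} = 0$. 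Here I would rewrite the first term using $(\partial_t + a\Delta)w_p = (\partial_t + a(p,0)\Delta)w_p + (a - a(p,0))\Delta w_p = \varphi_{j,p} f + (a - a(p,0))\Delta w_p$, and exploit that $\psi_{j,p} \equiv 1$ on $\supp \varphi_{j,p}$, so $\psi_{j,p}\varphi_{j,p} f = \varphi_{j,p} f$. This isolates the leading term $\varphi_{j,p} f$ exactly, leaving two genuine error contributions.

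The key organizational step is the assignment of the remaining terms to $E^0$ and $E^1$. The term $E^0_{j,p} f := \psi_{j,p}(a - a(p,0))\Delta w_p$ is the essential frozen-coefficient error; its smallness comes from two independent sources. Since $a \in \hoone$, the coefficient difference satisfies $|a(q,t) - a(p,0)| \lesssim d_M(q,p)^{\A} + t^{\A/2}$, and on the support of $\psi_{j,p}$ (which has spatial diameter $O(\vep)$ by the scaling in the definition of $\widetilde\psi_{j,p}$) together with $t \in [0,T]$ we gain a factor $O(\vep^{\A} + T^{\A/2})$; combined with the mapping property from Theorem \ref{mapping} (after the time rescaling, and using that $H_p$ maps $\ho \to \hho$ so that $\Delta w_p \in \ho$ with norm controlled by $\|\varphi_{j,p} f\|_{\A}$), this yields the stated bound $\|E^0_{j,p} f\|_{\A} \leq C(\vep + T^{\A/2})\|\varphi_{j,p} f\|_{\A}$ under the hypothesis $T/\vep^2 < 1$. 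The condition $T/\vep^2 < 1$ is precisely what keeps the parabolic rescaling comparison between spatial scale $\vep$ and temporal scale $\sqrt{T}$ under control, and matches the structure of the argument in \cite{EM}. The remaining commutator term $E^1_{j,p} f := a\big( (\Delta\psi_{j,p}) w_p + 2\langle \nabla\psi_{j,p}, \nabla w_p\rangle \big)$ is supported where $\nabla\psi_{j,p} \neq 0$, i.e. away from $p$ where $\psi_{j,p}$ is locally constant; this is exactly the setting of the auxiliary Lemma \ref{lemma:aux-smoothing}, and its operator norm vanishes as $T \to 0$ by the $O(\sqrt{t})$ gain recorded there.

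The main obstacle I anticipate is making the bound on $E^0$ genuinely \emph{uniform} in the lattice index $p \in L_{j,\vep}$ and in $j$, with a single constant $C$. Each individual estimate follows from Theorem \ref{mapping}, but the uniformity requires that the implied constants in the heat-kernel mapping bounds for $H_p$ do not degenerate as the freezing point $p$ ranges over the boundary or as $a(p,0)$ varies. I would control this by using that $a$ is bounded below by a positive constant and bounded above, so the frozen operators $a(p,0)\Delta$ form a uniformly parabolic family, and the time rescaling $t \mapsto a(p,0) t$ transforms each $H_p$ into the standard heat operator $e^{t\Delta}$ whose mapping constants are fixed; the bounded finite overlap of the supports $\{\psi_{j,p}\}$ then assembles the local estimates into a global one without accumulation. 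A secondary technical point is verifying that the parabolic H\"older norm on the $O(\vep)$-scale domains interacts correctly with the rescaling — specifically that differentiating $w_p$ and localizing by $\psi_{j,p}$ does not introduce negative powers of $\vep$ that would spoil the gain; this is handled by the precise scaling built into $\widetilde\psi_{j,p}$ and by keeping the $\Delta w_p$ factor (rather than the full Hessian) in $E^0$, consistent with the hybrid-space philosophy underlying the definition of $\hho$.
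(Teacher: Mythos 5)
Your decomposition is exactly the paper's: the same interpolation of the frozen coefficient, the same definitions $E^0 f = \psi\,(a - a(p,0))\,\Delta H_p[\varphi f]$ and $E^1 f = [\psi, a\Delta]\, H_p[\varphi f]$, and the same use of Lemma \ref{lemma:aux-smoothing} for $E^1$ (valid because derivatives of $\psi$ vanish near the edge and on the support of $\varphi$). The gap is in the $E^0$ estimate, which is the heart of the lemma. You bound the coefficient oscillation by $|a(q,t)-a(p,0)| \lesssim d_M(q,p)^{\A} + t^{\A/2}$, i.e.\ you use only the H\"older seminorm of $a$, obtaining the factor $\vep^{\A} + T^{\A/2}$, and then assert that this ``yields'' the stated bound $C(\vep + T^{\A/2})\|\varphi_{j,p} f\|_{\A}$. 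It does not: $\vep^{\A}$ is much larger than $\vep$. The linear factor is precisely what the hypothesis $a \in \hoone$ buys and what the paper uses: $x^{-1}\mathcal{V}_e\, a \in \ho$ gives $\partial_x a, \partial_y a \in L^\infty$, and since any element of $\ho$ restricted to $x=0$ is constant in $z$, the mean value theorem gives $|a(q,t)-a(p,0)| \le 2\|a\|_{1+\A}\,(\vep + T^{\A/2})$ on the support of $\psi_{j,p}$. The distinction is not cosmetic: in Proposition \ref{prop:boundary-parametrix} this bound gets multiplied by $[\varphi_{j,p}]_{\A} \approx \vep^{-\A}$, giving $\|E^0\| \le C(\vep^{1-\A} + \vep^{-\A}T^{\A/2})$, which can be made small; with your factor $\vep^{\A}$ one gets $\vep^{\A-\A} = O(1)$, and the Neumann series inverting $I+E$ in Proposition \ref{prop:short-time-reg} could never be run.

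Second, your plan never confronts the negative powers of $\vep$ coming from the cutoff itself: $[\psi_{j,p}]_{\A} \approx \vep^{-\A}$, so expanding the H\"older norm of $\psi\,(a - a(p,0))\, w$ with $w := \Delta H_p[\varphi f]$ produces the term $[\psi]_{\A}\,\|a-a(p,0)\|_\infty\,\|w\|_\infty$ carrying $\vep^{-\A}$. You claim such losses do not arise (``does not introduce negative powers of $\vep$,'' handled ``by the precise scaling built into $\widetilde{\psi}_{j,p}$''), but they do arise, and the cutoff's scaling is their source, not their cure. The paper's mechanism is different: $w$ vanishes identically at $t=0$ (which follows from $H_p : \ho \to t^{\A/2}\,\mathcal{C}^{2}_{\textup{ie}}$), hence $\|w\|_\infty \le T^{\A/2}\,[w]_{\A} \le C\, T^{\A/2}\,\|\varphi f\|_{\A}$; every term carrying $[\psi]_{\A}$ or $[a]_{\A}$ is paired with $\|w\|_\infty$ and so gains $T^{\A/2}$, and the hypothesis $T/\vep^2 < 1$ is used exactly to absorb $\vep^{-\A}\,T^{\A/2} \le 1$ --- not as a vague ``parabolic rescaling comparison.'' Without both ingredients (the linear oscillation bound and the $t=0$ vanishing of $w$), the claimed estimate for $E^0$ is not proved. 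Your concern about uniformity in $j,p$ is legitimate but minor, and is resolved as you suggest by uniform parabolicity of the frozen family and the finite overlap of the supports.
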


\begin{proof}
We apply the variable coefficient operator to $u_p$ and estimate.  
Note that we drop the subscripts on $\varphi, \psi$ and the error terms $E^i$ 
for simplicity.  Furthermore, all norms and semi-norms will be local and so we simply 
indicate the H\"older index.  We remind the reader that constants may change from line to line.
We compute 
\begin{equation}\label{eqn:var-est}
\begin{split}
(\partial_t + a \Delta) u_p &= (\partial_t + a \Delta) \left( \psi  H_p [ \varphi f ]\right) \\
&= \psi (\partial_t + a \Delta) \left(   H_p [ \varphi f ]\right) + [\psi, a\Delta] H_p [ \varphi f ]  \\
&= \psi (\partial_t + a(p,0) \Delta) \left(   H_p [ \varphi f ]\right) 
\\ &+ \psi ( (a - a(p,0)) \Delta) \left(   H_p [ \varphi f ]\right) + [\psi, a\Delta] H_p [ \varphi f ]\\
&= \psi \varphi f  + \psi ( (a - a(p,0)) \Delta) \left(   H_p [ \varphi f ]\right) + [\psi, a\Delta] H_p [ \varphi f ],
\end{split}
\end{equation}
where we have interpolated the constant coefficient $a(p,0)$ above.    Define
\begin{equation}\label{E-1-2}
\begin{split}
E^0 f &:= \psi ( (a - a(p,0)) \Delta) \left(   H_p [ \varphi f ]\right), \; \mbox{and}\\
E^1 f &:= [\psi, a\Delta] H_p [ \varphi f ].
\end{split}
\end{equation}
For the first term of equation \eqref{eqn:var-est}, note that since 
$\psi \equiv 1$ on the support of $\varphi$, that $\psi \varphi f = \varphi f$.
For the third term of equation \eqref{eqn:var-est}, note that for any function $v$,
\[ [\psi, a\Delta] v = -2a \inn{\nabla \psi}{\nabla v} - a v \Delta \psi. \]
Since $\psi \equiv 1$ on the support of $\varphi$, any derivative of $\psi$ 
vanishes on $\supp \varphi$, and consequently near the boundary.  
Thus by Lemma \ref{lemma:aux-smoothing}, $E^1: \ho(M \times [0,T])\to 
\ho(M \times [0,T])$ is bounded with the operator norm of $E^1$ 
tending to zero as $T \to 0$. \medskip

It remains to establish mapping properties for $E^0$ and estimate 
its operator norm. First, observe that $\varphi f \in \ho$, and so by earlier 
Schauder estimates, $H_p [\varphi f] \in \hho$ with 
\begin{align*}
 \| H_p [\varphi f] \|_{2+\alpha} \leq C \| \varphi f\|_{\alpha}, 
\end{align*}
for a constant $C$ remaining bounded as $T \to 0$.  Thus,
\begin{align}\label{Hf}
 \| \Delta H_p [\varphi f] \|_{\alpha} \leq C' \| \varphi f\|_{\alpha}. 
\end{align}
For the remainder of the proof we introduce the following two abbreviations.
We write $w := \Delta H_p [\varphi f]$ and also for any $v \in \ho$
$$
[v]_\A := \sup \frac{|v(p,t)- v(p',t')|}{d_M(p,p')^\A+|t-t'|^{\frac{\A}{2}}}.
$$
We can now estimate
\begin{equation} \label{eqn:est-key-term-first}
\begin{split}
 \| \psi ( a - a(p,0) ) w\|_{\alpha} &= \| \psi ( a - a(p,0)) w\|_{\infty} + [ \psi(a-a(p,0)) w]_{\alpha} \\
 &\leq \| \psi(a-a(p,0)) w\|_{\infty} + [\psi]_{\alpha} \| a - a(p,0) \|_{\infty} \| w \|_{\infty} \\
 &+ \| \psi\|_{\infty} [ a - a(p,0)]_{\alpha} \| w \|_{\infty} + \|\psi\|_{\infty} \|a - a(p,0)\|_{\infty} [w]_{\alpha},
\end{split}
\end{equation}
where the norms involved are taken over the support of $\psi$.  Write $p = (0, y_0, z_0)$, and for any $q = (x,y,z)$ in the support of $\psi$ 
we find that by interpolating frozen coefficients and using the mean-value theorem,
as well as the fact that $a \in \hoone$
\begin{align*}
|a(x,y,z,t) - a(0,y_0,z_0,0)| 
&\leq \|\partial_x a\|_{\infty} |x| + \|\partial_y a\|_{\infty} |y-y_0| + \|a\|_\A |t|^{\alpha/2},\\
&\leq 2 \|a\|_{1+\A}( d_M(p,q) + |t|^{\alpha/2} ) \leq 2 \|a\|_{1+\A} ( \vep + |T|^{\alpha/2} ).
\end{align*}
where we have used the fact that $a(0,y_0,\cdot,t)$ is constant in $z$.  
So this calculation tells us that there exists some constant $C>0$ such that on the support of $\psi$ 
\[ \|a - a(p,0)\|_{\infty} \leq C( \vep + |T|^{\alpha/2} ). \]

We will also need another observation before we can estimate 
inequality \eqref{eqn:est-key-term-first}.  Observe that 
$w = \Delta H_p [\varphi f]$ vanishes identically at $t=0$. 
Indeed, by similar arguments as in the second statement of Theorem \ref{mapping}, 
a regularity improvement can be translated into a time weight, so that in fact
$H_p:\ho \to t^{\frac{\A}{2}}\mathcal{C}^2_{\textup{ie}}$, where $\mathcal{C}^2_{\textup{ie}}$
is defined exactly as $\hho$ with $\ho$ replaced by bounded continuous functions. Consequently,
we may estimate $\| x^2\Delta H_p [\varphi f] \|_\infty \leq C t^{\frac{\A}{2}} \| f\|_\infty$
Consequently $x^2 w = x^2\Delta H_p [\varphi f]$ vanishes identically at $t=0$, 
and hence same holds for $w$. Since $w \in \ho$, we find for fixed $q$,
\[ 
|w(q,t)| = |w(q,t)-w(q,0)| \leq [w]_{\alpha} |t|^{\alpha/2}. 
\]
We can now estimate the four terms on the right hand side
of \eqref{eqn:est-key-term-first}. 
For the first term in \eqref{eqn:est-key-term-first}, we find using \eqref{Hf}
\begin{equation}\label{E-01}
\| \psi(a-a(p,0)) w\|_{\infty} \leq C( \vep + T^{\alpha/2} ) T^{\alpha/2} \| \varphi f\|_{\alpha}.
\end{equation}
For the second term in \eqref{eqn:est-key-term-first}, we find
\begin{align}\label{E-02}
 [\psi]_{\alpha} \| a - a(p,0) \|_{\infty} \| w \|_{\infty} \leq C \vep^{-\alpha} ( \vep + T^{\alpha/2} ) T^{\alpha/2} \| \varphi f\|_{\alpha},
\end{align}
since the scaling in $[\psi]_{\alpha} \approx \vep^{-\alpha}$.
For the third term appearing in \eqref{eqn:est-key-term-first}, we estimate
\begin{align} \label{E-03}
\| \psi\|_{\infty} [ a - a(p,0)]_{\alpha} \| w \|_{\infty} \leq C [a]_{\alpha} T^{\alpha/2} \| \varphi f\|_{\alpha},
\end{align}
and finally, for the final term in \eqref{eqn:est-key-term-first}, we estimate
\begin{align} \label{E-04}
 \|\psi\|_{\infty} \|a - a(p,0)\|_{\infty} [w]_{\alpha} 
\leq C ( \vep + T^{\alpha/2} ) \| \varphi f\|_{\alpha},
\end{align}
In our eventual application we take $T$ and $\vep$ small, so we may assume $ T < 1$ and $\vep < 1$.  In view of \eqref{E-01}--\eqref{E-04}, as well as \eqref{eqn:est-key-term-first}, we find that indeed if $T/\vep^2 < 1$, then
$\|E^0 f\|_{\alpha} \leq C (\vep + T^{\alpha/2}) \| \varphi f \|_{\alpha}$, 
concluding the proof of the theorem.
\end{proof}

Adjusting the parameters $\vep$ and $T$, we can now establish the mapping
properties of a boundary parametrix $Q_b$, which we have introduced
at the beginning of the subsection.

\begin{prop} \label{prop:boundary-parametrix} For every $\delta > 0$, 
there exists  $\vep > 0$ sufficiently small and $T_0 > 0$ such that
\[ Q_b: \ho(M \times [0,T_0]) \to \hho(M \times [0,T_0]) \]
is a bounded operator, and 
\[ (\partial_t - a \Delta) (Q_b f) = \Phi f + E^0 f + E^1 f, \]
where $\| E^0 f\| < \delta$ and $\| E^1 f \| \to 0$ as $T_0 \to 0$.
\end{prop}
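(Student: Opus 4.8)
The plan is to assemble $Q_b$ from the frozen-coefficient solution operators and to control the resulting error operators through the bounded overlap of the cutoff supports. First I would apply $\partial_t + a\Delta$ termwise to $Q_b f = \sum_{j}\sum_{p\in L_{j,\vep}}\psi_{j,p} H_p[\varphi_{j,p}f]$ and invoke Lemma \ref{lemma:main-frozen-est} on each summand. By linearity this gives
\[
(\partial_t + a\Delta)(Q_b f) = \sum_{j}\sum_{p\in L_{j,\vep}}\varphi_{j,p}f + E^0 f + E^1 f,
\]
where I set $E^0 := \sum_{j,p}E^0_{j,p}$ and $E^1 := \sum_{j,p}E^1_{j,p}$. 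Since $\Phi = \sum_{j,p}\varphi_{j,p}$ by construction, the first sum is exactly $\Phi f$, which recovers the stated leading term; it remains to interpret the two error sums as bounded operators on $\ho$ and to quantify their norms.

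Next I would establish $\hho$-boundedness of $Q_b$ for a \emph{fixed} choice of $\vep$ and $T_0$. Each summand $u_p = \psi_{j,p}H_p[\varphi_{j,p}f]$ lies in $\hho$ with $\|u_p\|_{2+\A}\le C\|\varphi_{j,p}f\|_{\A}$ by the frozen-coefficient mapping property of Theorem \ref{mapping} (after the time rescaling noted above). The structural input is that every point of $\widetilde{M}$ lies in the support of at most a fixed number of the $\psi_{j,p}$, so the defining sum is locally finite. A summation estimate for locally finite families of uniformly bounded support diameter then converts the local $\hho$-bounds into a global one for $Q_b f$; at this stage the constants may depend on $\vep$, which is harmless since $\vep$ is fixed.

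For the error bounds I would feed the per-term estimates of Lemma \ref{lemma:main-frozen-est} into the same summation estimate, where now uniformity matters. The constant $C$ in $\|E^0_{j,p}f\|_{\A}\le C(\vep + T^{\A/2})\|\varphi_{j,p}f\|_{\A}$ is independent of $j,p$ because all the frozen problems descend from a fixed finite family of charts. Passing to the Hölder seminorm of the sum produces the standard bound $\big[\sum_{j,p}E^0_{j,p}f\big]_{\A}\lesssim \sup_{j,p}[E^0_{j,p}f]_{\A} + \vep^{-\A}\sup_{j,p}\|E^0_{j,p}f\|_{\infty}$, the second term coming from pairs of points separated by a distance comparable to the support scale $\vep$. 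Here the hypothesis $T/\vep^2<1$ is used precisely to absorb the $\vep^{-\A}$ factors, since then $\vep^{-\A}T^{\A/2}\le 1$. Given $\delta>0$, I would first fix $\vep$ small to control the $\vep$-part of the resulting prefactor and then shrink $T_0<\vep^2$ to control the $T^{\A/2}$-part, yielding $\|E^0\|<\delta$. For $E^1$, part (2) of Lemma \ref{lemma:main-frozen-est} (via Lemma \ref{lemma:aux-smoothing}) gives per-term operator norms tending to $0$ as $T\to 0$ uniformly in $j,p$, so the same summation estimate gives $\|E^1\|\to 0$ as $T_0\to 0$.

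The main obstacle is exactly this summation step: reconciling the $\vep^{-\A}$ growth of the cutoff Hölder seminorms — unavoidable once the lattice $L_{j,\vep}$ refines — against the $(\vep + T^{\A/2})$ smallness supplied by the frozen-coefficient estimate. This is why the two parameters cannot be sent to zero independently; one is forced into the regime $T_0<\vep^2$ and must order the limits, fixing $\vep$ first and only then shrinking $T_0$. The technical heart is verifying that the overlap constant — the number of cutoffs whose supports can meet at a single point — is genuinely independent of $\vep$, which is what keeps the summed constants uniform; this in turn rests on the fixed-profile construction and regularity of the $\varphi_{j,p},\psi_{j,p}$ recorded in the preliminaries.
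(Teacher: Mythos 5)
Your proposal is correct and follows essentially the same route as the paper's proof: termwise application of $\partial_t + a\Delta$ combined with Lemma \ref{lemma:main-frozen-est}, reduction of the operator norms of $E^0, E^1$ to uniform local estimates via the bounded overlap of the cutoff supports, and the same ordered limit (fix $\vep$ first using $\alpha \in (0,1)$, then shrink $T_0$ subject to $T_0/\vep^2 < 1$). The only cosmetic difference is that the paper extracts the critical $\vep^{-\alpha}$ factor from the cutoff seminorm scaling $[\varphi_{j,p}]_{\alpha} \leq C\vep^{-\alpha}$ rather than from far-separated point pairs in the summation, but both yield the same final bound $\|E^0\| \leq C(\vep^{1-\alpha} + \vep^{-\alpha}T^{\alpha/2})$ and the same conclusion.
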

\begin{proof}
Using the previous lemma we may write
\begin{align*}
( \partial_t - a \Delta ) (Q_b f) &= (\partial_t - a \Delta) \left( \sum_{j=1}^N 
\sum_{p \in L_{j,\vep}} \psi_{j,p} H_p [ \varphi_{j,p} f ] \right) \\
&= \sum_{j=1}^N \sum_{p \in L_{j,\vep}}  (\partial_t - a \Delta) \left[\psi_{j,p} H_p [ \varphi_{j,p} f ]   \right] \\
&= \Phi f + \sum_{j=1}^N \sum_{p \in L_{j,\vep}} E^0_{j,p} f + \sum_{j=1}^N \sum_{p \in L_{j,\vep}} E^1_{j,p} f,
\end{align*}
and for $i = 0, 1$  we define $E^i: \ho(M \times [0,T_0]) \to \ho(M \times [0,T_0])$ by
\begin{align*}
E^i f := \sum_{j=1}^N \sum_{p \in L_{j,\vep}} E^i_{j,p} f. 
\end{align*}
It remains to study their operator norms, which by definition are defined by
\begin{align*}
 \|E^i\| &= \sup_{\|f\|_\A =1} \| E^i f \|_{\A} 
 = \sup_{\|f\|_\A =1} \left\| \sum_{j=1}^N \sum_{p \in L_{j,\vep}} E^i_{j,p} f \right\|_{\A}.
 \end{align*}
Given our localization of the H\"older norms, and the respective local definition 
of the $E^i_{j,p} f$, bounding the operator norm of $E^i$ equivalent to estimating the quantity
\[ \sup_{\|f\|_\A =1} \| E^{i}_{j,p} f\|_{\alpha}, \]
independently of $j$ and $p$. The estimate for $E^1$ follows from the previous lemma.
Regarding the estimate for $E^0$, Lemma \ref{lemma:main-frozen-est} gives that 
$\|E^0_{j,p} f\|_{\alpha} \leq C (\vep + T^{\alpha/2}) \| \varphi_{j,p} f \|_{\alpha}$. 
Recalling the scaling that defines $\varphi_{j,p}$ we find $[\varphi_{j,p}]_{\alpha} \leq C \vep^{-\alpha}$ and so
\[ \| E^0\| \leq C (\vep^{1-\alpha} + \vep^{-\alpha} T^{\alpha/2}). \]
Recall that $\alpha \in (0,1)$.  Given $\delta > 0$, choose $\vep > 0$ so 
small that $C \vep^{1-\alpha} < \frac{1}{2} \delta$ and that the level set 
$x = \vep$ is a smooth hypersurface.  Then choose $T > 0$ sufficiently 
small so that both $\vep^{-\alpha} T^{\alpha /2} < \frac{1}{2} \delta$ and $T/\vep^2 < 1$. This proves
the desired estimate for the operator norm of $E^0$.
\end{proof}

\subsection{Parametrix construction} \label{sec:reg-parametrix}

Equipped with the boundary parametrix construction from the previous 
subsection, we construct the full parametrix to the inhomogeneous Cauchy problem.
We begin by constructing an approximate interior parametrix.  This relies on the classical 
theory of parabolic PDE on compact manifolds, and so we only briefly sketch the idea.
\medskip

Recall that $\Phi$ is a bump function that is identically $1$ in a (now fixed) 
$\vep$-neighbourhood of the boundary.  Recall also that $x = \vep$ is a smooth 
hypersurface, and so $Y_{\vep} = \{ x \geq \vep / 2\}$ is a smooth manifold with boundary.  
Let $\overline{Y}$ denote the double of $Y_{\vep}$, which is now a manifold without boundary. 
The Riemannian metric on $Y_{\vep} \subset M$ is not product near the boundary, so that it does
not double to a smooth metric on $\overline{Y}$. We smoothen out the metric in a narrow collar
neighborhood of the join, such that the metric on $\overline{Y}$ and $M$ coincide over $Y_{2\vep}$. 
Note that norms of the edge H\"older spaces are equivalent to the 
classical H\"older spaces since we are working away form the edge.  \medskip

The function $(1-\Phi)$ defines a smooth cutoff function on the closed double $\overline{Y}$,
which we denote by $(1-\Phi)$ again. Now consider the extension $\overline{P}$ of $P=\partial_t + a \Delta$ to 
a uniformly parabolic operator on $\overline{Y}$ and consider the inhomogeneous Cauchy problem
\begin{align*}
\overline{P} u = (1-\Phi) f, \quad u(t = 0) = 0.
\end{align*}
Classical parabolic PDE theory implies the existence of solution operator 
$\widetilde{Q_i}: \mathcal{C}^{\A}(\overline{Y} \times [0,T]) \to 
\mathcal{C}^{2+\A}(\overline{Y} \times [0,T])$. 
Finally, let $\Psi$ be any smooth cutoff function where 
$\Psi \equiv 1$ on $\supp( 1-\Phi )$.  Then define the interior parametrix to be
\[ Q_i f = \Psi \widetilde{Q_i} [ (1-\Phi) f ]. \]
We are now ready to construct the parametrix, which comprises the 
boundary and the interior parametrices introduced above.  Given $f \in \ho(M \times [0,T])$, set
\[ Q f = Q_b f + Q_i f. \]

\begin{prop} \label{prop:short-time-reg}
Let $a$ be a positive function bounded from below that lies in 
$\hoone(M \times [0,T])$, and consider the operator $P = \partial_t + a \Delta$.  
For $T_0>0$ sufficiently small there exists a right inverse $\mathcal{Q}$ for $P$ such that
\[ \mathcal{Q}: \ho(M \times [0,T_0]) \to \hho(M \times [0,T_0]) \]
is a bounded map and if $f \in \ho(M \times [0,T])$ then $u = \mathcal{Q}f$ is a solution to the equation
\[ (\partial_t + a \Delta) u = f, \quad u(p,0) = 0. \]
\end{prop}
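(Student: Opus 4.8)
The plan is to assemble the full parametrix $Q = Q_b + Q_i$ out of the boundary parametrix of Proposition \ref{prop:boundary-parametrix} and the interior parametrix constructed above, to show that $PQ = \textup{id} + E$ for an error operator $E$ whose operator norm on $\ho(M \times [0,T_0])$ can be driven strictly below one, and then to invert $\textup{id} + E$ by a Neumann series, setting $\mathcal{Q} := Q(\textup{id} + E)^{-1}$.

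First I would compute the contribution of the interior parametrix. Since $\widetilde{Q_i}$ solves $\overline{P}\, u = (1-\Phi)f$ on the double $\overline{Y}$ with vanishing initial data, pulling $P$ past the cutoff $\Psi$ in $Q_i f = \Psi \widetilde{Q_i}[(1-\Phi)f]$ gives
\[
P(Q_i f) = \Psi \, \overline{P}\bigl(\widetilde{Q_i}[(1-\Phi)f]\bigr) + [P,\Psi]\,\widetilde{Q_i}[(1-\Phi)f] = (1-\Phi)f + E^2 f,
\]
where I used $\Psi \equiv 1$ on $\supp(1-\Phi)$ so that $\Psi(1-\Phi) = (1-\Phi)$, and where the commutator $[P,\Psi]$ is the first-order operator $2a\inn{\nabla\Psi}{\nabla\,\cdot\,} + a(\Delta\Psi)(\cdot)$ supported in $\{\nabla\Psi \neq 0\}$, exactly of the type already computed for $[\psi, a\Delta]$. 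Thus $E^2 f := [P,\Psi]\widetilde{Q_i}[(1-\Phi)f]$ is built from the classical heat solution operator pre- and post-multiplied by cutoffs with separated supports, all away from the edge. By the same mechanism as in Lemma \ref{lemma:aux-smoothing}, the operator norm of $E^2 \colon \ho \to \ho$ tends to zero as $T_0 \to 0^+$.

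Combining this with Proposition \ref{prop:boundary-parametrix}, which provides $P(Q_b f) = \Phi f + E^0 f + E^1 f$, I obtain
\[
P(Qf) = \bigl(\Phi + (1-\Phi)\bigr)f + (E^0 + E^1 + E^2)f = f + Ef, \qquad E := E^0 + E^1 + E^2.
\]
I would then fix the parameters in the order forced by the estimates: first choose $\vep > 0$ small enough that $\|E^0\| < \tfrac{1}{3}$ (Proposition \ref{prop:boundary-parametrix}), which simultaneously guarantees that $x = \vep$ is a smooth hypersurface as required for the interior construction; then choose $T_0 > 0$ small enough that $\|E^1\| < \tfrac{1}{3}$, $\|E^2\| < \tfrac{1}{3}$, and $T_0/\vep^2 < 1$ all hold. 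This yields $\|E\| < 1$ on $\ho(M \times [0,T_0])$, so the Neumann series $(\textup{id} + E)^{-1} = \sum_{k \geq 0} (-E)^k$ converges to a bounded operator on $\ho(M \times [0,T_0])$.

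Setting $\mathcal{Q} := Q(\textup{id} + E)^{-1}$ gives $P\mathcal{Q} = (\textup{id} + E)(\textup{id} + E)^{-1} = \textup{id}$, so $\mathcal{Q}$ is a right inverse. Boundedness $\mathcal{Q} \colon \ho \to \hho$ follows because $Q = Q_b + Q_i$ maps $\ho \to \hho$ boundedly (Proposition \ref{prop:boundary-parametrix} for $Q_b$, and classical Schauder estimates together with the equivalence of edge and classical H\"older norms away from the edge for $Q_i$), while $(\textup{id} + E)^{-1}$ is bounded on $\ho$. Finally, $u = \mathcal{Q}f = Qg$ with $g = (\textup{id} + E)^{-1}f \in \ho$, and both $Q_b g$ and $Q_i g$ vanish at $t = 0$ by construction, so $u(p,0) = 0$. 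The main obstacle is the bookkeeping of the three error terms: one must confirm that the interior commutator error $E^2$ genuinely decays as $T_0 \to 0$ via a Lemma \ref{lemma:aux-smoothing}-type estimate, and that the three smallness requirements remain mutually compatible under the coupling $T_0/\vep^2 < 1$ that links the localization scale $\vep$ to the time horizon $T_0$.
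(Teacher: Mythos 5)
Your proposal is correct and follows essentially the same route as the paper: the same decomposition $Q = Q_b + Q_i$, the same three error terms $E^0, E^1, E^2$ controlled respectively by shrinking $\vep$ and $T_0$ (with the coupling $T_0/\vep^2 < 1$), and the same Neumann-series inversion $\mathcal{Q} = Q(\textup{id}+E)^{-1}$. If anything, your bookkeeping is slightly more careful than the paper's, which abbreviates the interior error as $[\Psi, a\Delta]\bigl((1-\Phi)f\bigr)$ where the commutator is really applied to $\widetilde{Q_i}[(1-\Phi)f]$, exactly as you wrote it.
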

\begin{proof}
Given $f \in \ho(M \times [0,T])$, we apply 
Proposition \ref{prop:boundary-parametrix} to compute
\[ (\partial_t + a \Delta) Q f = \Phi f + E^0 f + E^1 f + (1-\Phi f) + E^2 f, \]
where $E^2 f := [\Psi, a \Delta] \left((1-\Phi) f\right)$.  
As in the calculation of $\| E^1\|$, it follows from Lemma 
\ref{lemma:aux-smoothing} that $\| E^2\| \to 0$ as $T \to 0$.  
This in turn shows that the error term $E := E^0 f + E^1 f + E^2 f$ 
can be made to have operator norm strictly less than $1$ for $T_0$ 
sufficiently small.  Thus we may invert $I + E$, acting on $\ho(M \times [0,T_0])$ 
via a Neumann series, and the required right inverse is
\[ \mathcal{Q} = Q ( I + E)^{-1}. \]
\end{proof}

A similar parametrix construction may be used to construct a right 
inverse to the homogeneous Cauchy problem as well.  However 
the following proposition will be sufficient for our purposes, 
despite being non optimal.

\begin{prop} \label{prop:short-time-reg-HCP}
Let $a$ be a positive function bounded from below that lies in 
$\hoone(M \times [0,T])$, and consider the operator $P = \partial_t + a \Delta$.  
For $T_0 > 0$ sufficiently small there exists a right inverse $\mathcal{R}$ for $P$ such that
\[ \mathcal{R}: \hhospace(M) \to \hho(M \times [0,T_0]) \]
is a bounded map and if $u_0 \in \hhospace(M)$ then $u = \mathcal{R} u_0$ 
is a solution to the equation
\[ (\partial_t + a \Delta) u = 0, \quad u(p,0) = u_0. \]
\end{prop}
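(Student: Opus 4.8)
The plan is to reduce the homogeneous Cauchy problem to the inhomogeneous one already solved in Proposition \ref{prop:short-time-reg}, rather than redoing the full parametrix construction; this is the \emph{non optimal but sufficient} route alluded to just above. Fix the $T_0 > 0$ furnished by Proposition \ref{prop:short-time-reg} and let $\mathcal{Q}$ denote the right inverse of $P = \partial_t + a\Delta$ constructed there. Given initial data $u_0 \in \hhospace(M)$, the first step is to extend $u_0$ to $M \times [0,T_0]$ as the time-independent function $(p,t) \mapsto u_0(p)$. Since the parabolic H\"older norm only adds a temporal seminorm that vanishes on time-independent functions, and since $\partial_t u_0 = 0 \in \ho$ while $\Delta u_0, x^{-1}\mathcal{V}_e u_0 \in \ho$ by hypothesis, this extension lies in $\hho(M \times [0,T_0])$ with $\|u_0\|_{\hho(M\times[0,T_0])} = \|u_0\|_{\hhospace(M)}$.

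Second, I would measure the failure of $u_0$ to solve the equation. As $u_0$ is time-independent,
\[ (\partial_t + a\Delta)\, u_0 = a\, \Delta u_0 =: -f. \]
Here $\Delta u_0 \in \ho(M) \hookrightarrow \ho(M\times[0,T_0])$ and $a \in \hoone \subset \ho(M\times[0,T_0])$; since the incomplete edge H\"older space $\ho$ is a Banach algebra under pointwise multiplication (the Leibniz bound $\|vw\|_{\A} \le C\|v\|_{\A}\|w\|_{\A}$ for the seminorm is immediate), the product $f = -a\,\Delta u_0$ lies in $\ho(M\times[0,T_0])$.

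Third, I would correct this defect using $\mathcal{Q}$. Setting $v := \mathcal{Q}f$, Proposition \ref{prop:short-time-reg} gives $v \in \hho(M\times[0,T_0])$ with $(\partial_t + a\Delta)v = f$ and $v(p,0) = 0$. Define
\[ \mathcal{R} u_0 := u_0 + \mathcal{Q}f = u_0 - \mathcal{Q}\!\left( a\,\Delta u_0 \right), \qquad u := \mathcal{R}u_0. \]
Then $u(p,0) = u_0(p) + v(p,0) = u_0$, and
\[ (\partial_t + a\Delta)u = (\partial_t + a\Delta)u_0 + (\partial_t + a\Delta)v = -f + f = 0, \]
so $u$ solves the homogeneous Cauchy problem. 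Boundedness of $\mathcal{R}: \hhospace(M) \to \hho(M\times[0,T_0])$ then follows by composing bounded maps: the extension $u_0 \mapsto u_0$ is norm-preserving into $\hho$; the map $u_0 \mapsto a\,\Delta u_0$ is bounded $\hhospace(M) \to \ho(M\times[0,T_0])$, since $\Delta$ is bounded $\hho \to \ho$ by the very definition of the norm \eqref{norms} and multiplication by $a \in \hoone \subset \ho$ is bounded on $\ho$; and $\mathcal{Q}: \ho \to \hho$ is bounded.

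I expect no serious obstacle, as the entire analytic content is carried by Proposition \ref{prop:short-time-reg}. The only points requiring a line of care are checking that the time-independent extension retains its $\hho$ regularity on the product space and that $a\,\Delta u_0 \in \ho$, the latter resting on the algebra property of the edge H\"older spaces together with $a \in \hoone \subset \ho$. This also explains why the statement is labeled \emph{non optimal}: the reduction forces the data $u_0$ to lie in the full second-order space $\hhospace(M)$ and produces $u$ only on the short interval $[0,T_0]$ inherited from $\mathcal{Q}$, whereas a direct homogeneous parametrix would presumably accommodate rougher initial data and exhibit the expected $\sqrt{t}$ smoothing of Theorem \ref{mapping}. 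As noted, however, this weaker form is all that the long-time existence argument requires.
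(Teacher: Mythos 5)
Your proposal is correct and is essentially identical to the paper's own proof, which likewise sets $\mathcal{R} u_0 := u_0 - \mathcal{Q}(a \Delta u_0)$ using the right inverse $\mathcal{Q}$ from Proposition \ref{prop:short-time-reg} and the fact that $u_0$ is time-independent. The additional details you supply (the time-independent extension, the algebra property giving $a\,\Delta u_0 \in \hospace$, and the boundedness by composition) are exactly the routine verifications the paper leaves implicit.
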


\begin{proof}
Note that $a \Delta u_0 \in \ho$ for $u_0 \in \hhospace(M)$. Using the right inverse for the inhomogeneous problem, we set 
$\mathcal{R} u_0 := u_0 - \mathcal{Q}(a \Delta u_0)$, which solves the problem, since $u_0$ is time-independent.
Note that the value of $T_0$ is the same as in Proposition \ref{prop:short-time-reg}.
\end{proof}

Finally, we prove Proposition \ref{prop:mainreg}, which extends the existence results
from the shorter time interval $[0,T_0]$ to the full time interval $[0,T]$.

\begin{proof}[Proof of Proposition \ref{prop:mainreg}]
By Proposition \ref{prop:short-time-reg} there exists a $T_0 > 0$ and a 
solution $u \in \hho( M \times [0,T_0])$ to the parabolic initial value problem
\[ (\partial_t + a \Delta) u = f; \; \; \; u(p,0) = 0. \]
If $T_0 \geq T$, then the proof is complete.
Otherwise $T_0 < T$, and we consider the homogeneous Cauchy problem
\begin{equation*}
 (\partial_t + a \Delta) v_1 = 0, v_1(p,0) = u(p,T_0), 
\end{equation*}
where the initial data $u(p,T_0) \in \hhospace(M)$.  
By Proposition \ref{prop:short-time-reg-HCP} the solution to this problem 
exists on the time interval $[0,T_0]$ independent of the initial value $u(p,T_0)$.  
We may also solve
\begin{equation*}
 (\partial_t + a \Delta) u_1 = f(p, t+T_0), u_1(0) = 0,
\end{equation*}
on the interval $[T_0, 2T_0]$, and then the function
\[ \widetilde{u}(p,t) = \begin{cases}
  u(p,t) & \text{for $0 \leq t \leq T_0$} \\
  u_1(p, t-T_0) + v_1(p,t-T_0) & \text{for $T_0 < t \leq 2T_0$}\\
\end{cases}
\]
extends $u$ past $T_0$.  This process continues until $nT_0 > T$, 
and produces a solution $u$ in $\hho(M \times [0,T])$.
Using a modification of the maximum principle proved earlier we 
may deduce uniqueness of solutions which completes the proof 
of Proposition \ref{prop:mainreg}.
\end{proof}

We conclude the subsection with an observation extending the 
statement of Proposition \ref{prop:mainreg}. Observe that the 
mapping properties of the parametrix $Q$, obtained by freezing coefficients, follows directly from the mapping properties of the heat operator
$e^{t\Delta}$ in Theorem \ref{mapping}. Consequently, we actually have the 
following statement, which while not optimal is sufficient for our 
purposes.

\begin{cor} \label{prop:mainreg2}
Let $a(p,t)$ be a positive function bounded from below away from zero
and of H\"older regularity $\hho$. Consider the operator $P = \partial_t + a \Delta$.  
Then the right parametrix $Q$ for $P$ 
constructed in Proposition \ref{prop:mainreg} admits the following mapping property
$$
Q:  \mathcal{C}^{2+\A}_{\textup{ie}} (M\times [0,T]) \to 
\mathcal{C}^{4+\A}_{\textup{ie}} (M\times [0,T])
$$
is bounded.
\end{cor}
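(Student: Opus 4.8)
The plan is to run the parametrix construction of Proposition \ref{prop:mainreg} one derivative level higher, replacing every appeal to the $k=0$ mapping property of the heat operator by the $k=1$ mapping property from Theorem \ref{mapping}. The single structural input is that, for $k=1$, Theorem \ref{mapping} asserts $e^{t\Delta}: \hho \to (\hhh \cap \sqrt{t}\,\hho)$. Since each frozen-coefficient heat operator $H_p$ is the heat operator of $\partial_t + a(p,0)\Delta$ with $a(p,0)$ a positive constant bounded above and below uniformly in $p$, the time rescaling $t \mapsto a(p,0) t$ conjugates $H_p$ to $e^{t\Delta}$, so that $H_p: \hho \to \hhh$ is bounded uniformly in $p$, and moreover the higher-order components of $H_p[\varphi f]$ carry an extra $\sqrt{t}$ factor and hence vanish at $t=0$. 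Note that here we exploit the strengthened hypothesis $a\in\hho$, whereas Proposition \ref{prop:mainreg} required only $a\in\hoone$.

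First I would treat the two building blocks. For the boundary parametrix $Q_b f = \sum_{j,p}\psi_{j,p} H_p[\varphi_{j,p}f]$, multiplication by the cutoffs $\varphi_{j,p},\psi_{j,p}$ — smooth and locally constant near the edge — is bounded on both $\hho$ and $\hhh$, since the cross terms in $\Delta(\varphi f)$ are supported away from the edge where the ie-H\"older norms reduce to classical ones; hence $Q_b: \hho \to \hhh$ is bounded by the uniform mapping property of $H_p$. For the interior parametrix $Q_i f = \Psi\widetilde{Q_i}[(1-\Phi)f]$, the operator $\widetilde{Q_i}$ lives on the closed double $\overline{Y}$ away from the edge, where classical higher-order parabolic Schauder theory with the $\mathcal{C}^{2+\A}$ coefficient $a$ furnishes $\widetilde{Q_i}: \mathcal{C}^{2+\A}(\overline{Y}\times[0,T]) \to \mathcal{C}^{4+\A}(\overline{Y}\times[0,T])$; as the ie-H\"older norms coincide with the classical ones on $\supp(1-\Phi)$, this gives $Q_i: \hho \to \hhh$.

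The substantive step is to upgrade the Neumann-series argument, i.e. to show that the error terms $E^0,E^1,E^2$ are bounded on $\hho$ with operator norm strictly less than one for $T_0,\vep$ small, so that $(I+E)^{-1}$ is bounded on $\hho$ and the right inverse $(Q_b+Q_i)(I+E)^{-1}$ maps $\hho \to \hhh$. For $E^1$ and $E^2$, whose kernels are compactly supported away from the front and right faces, Lemma \ref{lemma:aux-smoothing} applies verbatim at the $\hho$ level and yields vanishing operator norm as $T_0 \to 0$. For $E^0 f = \psi(a-a(p,0))\,\Delta H_p[\varphi f]$, I would set $w := \Delta H_p[\varphi f]$, which now lies in $\hho$ by the $k=1$ mapping property, and estimate the $\hho$-norm of the product $\psi(a-a(p,0))\,w$, using that $\hho$ is a multiplier algebra — the cross terms $\inn{\nabla a}{\nabla w}$ being controlled by the products of the edge derivatives $x^{-1}\mathcal{V}_e a$ and $x^{-1}\mathcal{V}_e w$, both in $\ho$.

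The main obstacle is exactly the smallness of $\|E^0\|_{\hho\to\hho}$: unlike $\|a-a(p,0)\|_\infty = O(\vep + T^{\A/2})$, the full $\hho$-norm of $a-a(p,0)$ is not small, so one cannot simply multiply a small factor against $\|w\|_{2+\A}$. The remedy is the one already used in Lemma \ref{lemma:main-frozen-est}: since the $\sqrt{t}$-improvement in Theorem \ref{mapping} forces $w$ and its $\hho$-components to vanish at $t=0$, the relevant sup-norms obey bounds of the type $\|w\|_\infty \lesssim T^{\A/2}$, and these small factors are paired with the non-small (semi)norms of $a-a(p,0)$, while the small factor $\|a-a(p,0)\|_\infty$ is paired with the non-small norms of $w$. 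Carrying out this bookkeeping term by term reproduces an estimate $\|E^0 f\|_{2+\A} \le C(\vep + T^{\A/2})\|\varphi f\|_{2+\A}$, and summing over the lattice as in Proposition \ref{prop:boundary-parametrix} gives $\|E^0\|_{\hho\to\hho} \le C(\vep^{1-\A} + \vep^{-\A}T^{\A/2})$, which is $<1$ for suitable $\vep,T_0$. Finally, since $\hhh$-regularity is local in time, the gluing procedure of Proposition \ref{prop:mainreg} propagates the $\hhh$ bound from $[0,T_0]$ to $[0,T]$, its homogeneous step $\mathcal{R}u_0 = u_0 - \mathcal{Q}(a\Delta u_0)$ mapping $\hhh \to \hhh$ by the inhomogeneous result just established together with the multiplier property $a\,\Delta u_0 \in \hho$.
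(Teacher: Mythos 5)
Your proposal is correct and takes essentially the same approach as the paper: the paper's proof also reduces the whole matter to the higher-order estimate of $E^0$ from Lemma \ref{lemma:main-frozen-est}, i.e.\ bounding $\|\Delta H_p[\varphi f]\|_{2+\A}$ by $\|\varphi f\|_{2+\A}$, using precisely the time-decay bounds coming from the $\sqrt{t}$-improvement in Theorem \ref{mapping} together with the observation that the highest-order H\"older seminorm is paired only with the small factor $\|a-a(p,0)\|_\infty$ on the support of $\psi$. Your additional treatment of the interior parametrix, of $E^1$ and $E^2$ via Lemma \ref{lemma:aux-smoothing}, and of the time-gluing step simply fills in details the paper explicitly regards as non-critical.
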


\begin{proof}
The only critical point in the construction is the higher order estimate of the norm of $E^0$ in Proposition \ref{prop:boundary-parametrix} which occurs through Lemma \ref{lemma:main-frozen-est}.

Continuing in the notation of Lemma \ref{lemma:main-frozen-est} and dropping the subscripts on $\varphi, \psi$ and the error terms $E^i$, we find we must estimate $ \| \Delta H_p [\varphi f] \|_{2+\alpha}$ in terms of $\| \varphi f \|_{2+\alpha}$.  The tedious estimation is similar to Lemma \ref{lemma:main-frozen-est} and we only remark that it is essential to use the time decay properties 
\[ \|\Delta H_p [\varphi f]\|_{1+\alpha} \leq C T^{\alpha/2}, \|\Delta H_p [\varphi f]\|_{2} \leq C T^{\alpha/2}, [\Delta H_p [\varphi f]]_{2+\alpha} \leq C, \]
and the observation that the H\"older semi-norm of the highest derivatives of $\varphi f$ are only paired with the $L^{\infty}$ norm of $|a-a(p,0)|$, which decays on the support of $\psi$.  
\end{proof}

\subsection{An application of Proposition \ref{prop:mainreg}} \label{sec:appl-reg}
Finally we conclude this section with a first application of our regularity result that will be used below.  

\begin{prop} \label{du-dt} \label{prop:dudt}
Assume $\scal(\ginit)\in \mathcal{C}^{4+\A}_{\textup{ie}}(M)$. Then a positive and bounded from below away from
zero solution $u\in \hho(M\times [0,T])$ to the normalized Yamabe flow 
\begin{align} 
\partial_t u - \frac{(m-1)}{N} u^{1-N} \Delta u = \frac{c(m)}{N} \left( \rho \, u - \scal(\ginit) u^{2-N}\right) 
\end{align}
is in fact $\mathcal{C}^{4+\A}_{\textup{ie}}(M\times [0,T])$ whenever it exists.
\end{prop}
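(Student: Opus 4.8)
The plan is to freeze the nonlinearity along the given solution and read the flow as a \emph{linear} variable-coefficient parabolic equation, to which Corollary \ref{prop:mainreg2} applies. Recall that $\mathcal{C}^{4+\A}_{\textup{ie}} = \{w \in \hho \mid \Delta w \in \hho\}$, so the assertion is precisely that the given $u \in \hho$ gains one further application of $\Delta$. Set
\[ a := \tfrac{(m-1)}{N}\, u^{1-N}, \qquad f := \tfrac{c(m)}{N}\left(\rho\, u - \scal(\ginit)\, u^{2-N}\right), \]
and regard $a$ as a \emph{fixed} coefficient determined by the known solution. Then the flow reads $P u = f$ with $u(0)=1$, where $P = \partial_t - a\Delta$ is exactly the operator treated in Proposition \ref{prop:mainreg} and Corollary \ref{prop:mainreg2}.

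First I would check the hypotheses on $a$. Since $u$ is continuous on $\widetilde{M} \times [0,T]$, positive and bounded away from zero, it takes values in a compact subinterval of $(0,\infty)$ on which $s \mapsto s^{1-N}$ is smooth. The hybrid spaces $\hho$ are closed under multiplication and under composition with such smooth functions: the only term requiring comment in the identity $\Delta G(u) = G'(u)\,\Delta u + G''(u)\,|\nabla u|^2$ is $|\nabla u|^2 = |x^{-1}\mathcal{V}_e u|^2$, which lies in $\ho$ because $x^{-1}\mathcal{V}_e u \in \ho$ and $\ho$ is an algebra; the remaining first-order and time derivatives are handled in the same way. Hence $a \in \hho$ is positive and bounded below, so Corollary \ref{prop:mainreg2} furnishes a right parametrix $Q$ for $P$ mapping $\hho \to \mathcal{C}^{4+\A}_{\textup{ie}}$ boundedly, while Proposition \ref{prop:mainreg} gives $Q : \ho \to \hho$ together with uniqueness for the Cauchy problem with vanishing initial data.

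Next I would show $f \in \hho$. The term $\scal(\ginit)\, u^{2-N}$ is the product of the time-independent factor $\scal(\ginit) \in \mathcal{C}^{4+\A}_{\textup{ie}}(M) \subset \hho$ with $u^{2-N} \in \hho$, hence lies in $\hho$ by the same algebra and composition properties. The delicate term is $\rho(t)\, u$: as $\rho$ is constant in space, $\rho u \in \hho$ as soon as $\rho \in \mathcal{C}^{1+\A/2}([0,T])$, since then $\partial_t(\rho u) = \rho'\, u + \rho\, \partial_t u \in \ho$ while the spatial and mixed derivatives only see the $\hho$-regularity of $u$. Establishing this time-regularity of the average scalar curvature is where I expect the real work to lie. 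I would obtain it by differentiating $\rho(t) = \vol(\ginit)^{-1}\int_M \scal(g(t))\, d\vol_{g(t)}$ under the integral sign --- legitimate because $u \in \hho$ renders $\scal(g(t))$ and its $t$-derivative continuous in $(p,t)$ and $\A/2$-H\"older in $t$ --- using volume-preservation of the normalized flow and the evolution equation satisfied by $\scal(g(t))$ along the flow.

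Finally I would conclude by uniqueness. Put $v := u - 1$; since $u(0) = 1$ and $P1 = \partial_t 1 - a\Delta 1 = 0$, the function $v \in \hho$ solves $Pv = f$ with $v(0) = 0$. By the uniqueness statement in Proposition \ref{prop:mainreg}, $v = Qf$. As $f \in \hho$, Corollary \ref{prop:mainreg2} gives $Qf \in \mathcal{C}^{4+\A}_{\textup{ie}}$, whence $v \in \mathcal{C}^{4+\A}_{\textup{ie}}$ and therefore $u = 1 + v \in \mathcal{C}^{4+\A}_{\textup{ie}}$, since constants lie in every such space. The circularity one might fear --- that $a$ depends on $u$ --- is harmless: once the solution $u$ is fixed, $a$ is a fixed coefficient of the required H\"older regularity, so no fixed-point argument is needed, only the linear bootstrap above.
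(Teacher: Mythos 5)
Your proposal is correct and takes essentially the same route as the paper's proof: freeze the coefficient $a = \frac{(m-1)}{N}u^{1-N}$, regard the flow as a linear parabolic equation with right-hand side $f = \frac{c(m)}{N}\left(\rho\, u - \scal(\ginit)u^{2-N}\right) \in \hho$, apply Corollary \ref{prop:mainreg2} to get a $\mathcal{C}^{4+\A}_{\textup{ie}}$ solution, and identify $u$ with it by uniqueness --- the paper does this last step by applying the maximum principle to $w = u - v$, which is the same mechanism underlying the uniqueness statement of Proposition \ref{prop:mainreg} that you invoke. Your explicit verification that $a$ and $f$ lie in $\hho$ (via the algebra and composition properties of the hybrid spaces) and your handling of the initial condition by subtracting the constant $1$ merely fill in details the paper asserts without proof.
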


\begin{proof}
Treat the right hand side of this equation as a fixed element of $\hho(M \times [0,T])$, where we note
that $\rho\in \hho([0,T])$ by the transformation formulae in \eqref{transformation} and \eqref{scal-rho-t}.
Since  $\frac{(m-1)}{N} u^{1-N} \in \mathcal{C}^{2+\A}_{\textup{ie}}$ is positive and uniformly 
bounded away from zero by assumption, we may apply the parabolic regularity result in Corollary 
\ref{prop:mainreg2} to obtain a solution $v \in \mathcal{C}^{4+\A}$ satisfying the equation
 \[ \partial_t v - \frac{(m-1)}{N} u^{1-N} \Delta v = \frac{c(m)}{N} \left( \rho \, u - \scal(\ginit) u^{2-N}\right). \]
Note that $w:= u-v$ solves $\partial_t w - \frac{(m-1)}{N} u^{1-N} \Delta w = 0$ with zero initial condition.
By the maximum principle $\partial_t w_{\max} \leq 0$ and $\partial_t w_{\min} \geq 0$. Due to the 
initial condition $w(0)=0$, we deduce $w\equiv 0$ and hence $u=v \in \mathcal{C}^{4+\A}(M \times [0,T])$. 
\end{proof}

\section{Uniqueness of the Yamabe flow on singular edge spaces}\label{unique-section}

We now provide an argument to show that solutions to the Yamabe flow 
\eqref{YF-conf} are unique. Uniqueness of solutions to the normalized Yamabe flow 
is then an easy consequence by a rescaling in time. We will use the parabolic maximum principle stated in 
Corollary \ref{PMP2}. 

\begin{thm}
Consider the Yamabe flow \eqref{YF-conf} for the conformal factor $u$  given by
\[ u^{\frac{4}{m-2}} \partial_t u = (m-1) \Delta u - \frac{m-2}{4} \scal(\ginit) u, \]
where $\Delta$ denotes the Laplace Beltrami operator for $\ginit$. One we specify initial data, a solution $u$ in $\hho$ is unique.
\end{thm}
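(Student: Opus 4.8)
The plan is to suppose that $u_1$ and $u_2$ are two solutions in $\hho$ of the Yamabe flow
\[ u^{\frac{4}{m-2}} \partial_t u = (m-1)\Delta u - \tfrac{m-2}{4}\scal(\ginit)\, u, \]
both with the same initial data $u_1(0) = u_2(0)$, and to derive a closed parabolic differential inequality for the difference $w := u_1 - u_2$ to which the maximum principle of Corollary \ref{PMP2} applies. First I would recall that both solutions are positive and bounded away from zero (indeed, by Proposition \ref{prop:dudt} we may even regard them as lying in $\mathcal{C}^{4+\A}_{\textup{ie}}$, though $\hho$ regularity suffices here), so that the nonlinear coefficient $u^{\frac{4}{m-2}}$ is uniformly elliptic/parabolic and we may freely divide by it.

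The central computation is to subtract the two flow equations. Writing $N = \frac{m+2}{m-2}$, it is cleaner to work with the equation in the form \eqref{flow2}, namely $\partial_t u^N = (m-1)N\Delta u - c(m)\scal(\ginit)\,u$ (dropping the normalizing $\rho$-term, which as explained only differs by a time rescaling). Subtracting gives
\[ \partial_t(u_1^N - u_2^N) = (m-1)N\,\Delta w - c(m)\scal(\ginit)\, w. \]
Next I would linearize the nonlinear terms using the mean value theorem: since $u_1, u_2$ are bounded between positive constants, there is a positive bounded-below factor $a \in \hospace$ (a suitable power of an intermediate value of $u$) with $u_1^N - u_2^N = a\, w$, so that $\partial_t(u_1^N - u_2^N) = a\,\partial_t w + (\partial_t a)\,w$ after a further application of the mean value theorem to the time derivative (here the regularity of the $u_i$ guarantees $\partial_t a$ is a bounded coefficient). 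Collecting terms yields a scalar parabolic inequality of the precise shape treated in Corollary \ref{PMP2},
\[ a\,\partial_t w = (m-1)N\,\Delta w + c\, w, \qquad w(p,0) = 0, \]
with $c \in \hospace$. Dividing by the positive coefficient $a$ puts this in the form $\partial_t w - \Delta w = \widetilde{c}\, w$ with a bounded zeroth-order term.

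To invoke the maximum principle I would split $\widetilde{c}$ into its negative part and a bounded remainder, or more simply substitute $w = e^{\lambda t}\, \tilde w$ for $\lambda$ large enough that the effective zeroth-order coefficient becomes strictly negative; this is the standard device that reduces the general sign of $\widetilde{c}$ to the hypothesis of Corollary \ref{PMP2}. Applying Corollary \ref{PMP2} (after the harmless $\eps$-perturbation turning $\leq$ into $<$) to both $\tilde w$ and $-\tilde w$ forces $\tilde w \leq 0$ and $\tilde w \geq 0$, hence $w \equiv 0$ and $u_1 = u_2$. Finally, uniqueness for the normalized flow follows from uniqueness for the unnormalized flow by the time rescaling already noted in the paper.

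The main obstacle I anticipate is not the maximum principle step itself but verifying that all the linearized coefficients genuinely lie in $\hospace$ and are bounded with the correct sign behavior, so that Corollary \ref{PMP2} is truly applicable on the singular edge space. In particular, one must be sure that $w \in \hho$ (so the maximum principle hypotheses hold, including that extrema at the edge are controlled by Theorem \ref{maxprin}), and that the mean-value factor $a$ and the derivative $\partial_t a$ remain in the H\"older class uniformly — this is where the positivity and lower bound on the $u_i$, together with their $\hho$ regularity, are used decisively.
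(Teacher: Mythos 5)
Your proposal is correct and follows essentially the same route as the paper's proof: subtract the two equations, linearize the nonlinearity by the mean value theorem to obtain a linear parabolic equation $\partial_t w = a\,\Delta w + b\,w$ with H\"older coefficients and $a>0$, apply the integrating-factor substitution $e^{\lambda t}$ to make the zeroth-order coefficient negative, and invoke the edge maximum principle (Corollary \ref{PMP2}, resting on Theorem \ref{maxprin}) to conclude $w\equiv 0$. The only cosmetic differences are that the paper linearizes the equation in the form $u^{\frac{4}{m-2}}\partial_t u = (m-1)\Delta u - \frac{m-2}{4}\scal(\ginit)u$ (placing the Taylor factor on $\partial_t v$ rather than on a mean-value coefficient $a$ with $\partial_t a$), and that it obtains $w\geq 0$ by exchanging the roles of the two solutions rather than by applying the maximum principle to $-w$.
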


\begin{proof} Suppose $u$ and $v$ are two positive solutions to this equation in 
$\hho$ that satisfy the same initial condition.  Set $\w = u-v$.  Then $\w(p,0) = 0$ and
\begin{align*}
u^{\frac{4}{m-2}} \partial_t u - v^{\frac{4}{m-2}} \partial_t v &= 
(m-1) \Delta \w - \frac{m-2}{4} \scal(\ginit) \w. 
\end{align*}
We find for the evolution of $\w$
\begin{align*}
\partial_t \w &= u^{-\frac{4}{m-2}}  \left((m-1) \Delta \w  - \frac{m-2}{4} \scal(\ginit) \w + 
(u^{\frac{4}{m-2}} - v^{\frac{4}{m-2}}) \partial_t v\right) \\
&= \left( (m-1) u^{-\frac{4}{m-2}} \right) 
\Delta \w - \frac{m-2}{4} \w\,  u^{-\frac{4}{m-2}} \scal(\ginit)  \\
&+ \w\, u^{-\frac{4}{m-2}} \int_0^1 
\left( su + (1-s) v \right)^{\frac{6-m}{m-2}} ds,
\end{align*}
where we have used Taylor's theorem in the last equality.  
Abstractly we have shown that $\w$ satisfies a parabolic equation
$\partial_t \w = a \Delta \w + b \w,$
where $a, b \in \hho$ are H\"older functions, and $a > 0$. 
We now apply an integrating factor trick.  For some constant $c$ to be chosen 
let $z = e^{c t} \w$.  Then $z(0) = w(0) = 0$ and $z$ satisfies the following parabolic equation
\[ \partial_t z = a \Delta z + (b+c)z. \]
We would like the coefficient $b+c$ to be negative, so choose 
$c < -\sup |b|$, then $b+c < 0$.  From the maximum principle we conclude 
$z \leq 0$, and from this we find that $w \leq 0$.  Repeating the 
argument by switching the roles of $u$ and $v$ yields $w = 0$.  
So solutions to the Yamabe flow are unique in $\hho$.
\end{proof}

\section{Evolution of the scalar curvature under the Yamabe flow}\label{scal-section}

In this subsection we review some general geometric preliminaries for the Yamabe flow.  
Everything here is well known in the classical case on a compact manifold without 
boundary, see for example \cite{Chow} and \cite{Ye}.  For background on the Yamabe problem in general, see \cite{LeeParker}. \medskip

From now on when discussing edge Yamabe flow, we will always assume feasibility 
of the edge metric $\ginit$ and H\"older regularity of the scalar curvature 
$\scal (\ginit)\in \hhh$, such that $\scal (\ginit)$ and $\Delta \scal (\ginit)$ are $\hho$. By Proposition 
\ref{du-dt} this implies in particular that $\Delta u, \partial_t u\in \hho$.

\subsection{Total scalar curvature functional and the conformal Yamabe invariant}
Recall the initial feasible edge metric is denoted by $\ginit$, its Laplacian by $\Delta$ and the normalized Yamabe flow evolves the metric 
within its conformal class with $g = u^{\frac{4}{m-2}} \ginit$. 
Consider the total scalar curvature functional 
\begin{align*}
\cS(g) &:= \frac{1}{\vl(g)^{\frac{m-2}{m}}} \int_M \scal(g) \; \dv.
\end{align*}
The volume form, the volume
and the scalar curvature of $g=u^{\frac{4}{m-2}} \ginit$ are given by the following expressions ($\Delta$ denotes 
the Laplace Beltrami operator defined with respect to $\ginit$)
\begin{equation}\label{transformation}
\begin{split}
&\dv = u^{\frac{2m}{m-2}} \textup{dvol}_{\ginit},  \\
&(\vl(g))^{\frac{m-2}{m}} = \left(\int_M u^{\frac{2m}{m-2}} \textup{dvol}_{\ginit} \right)^{\frac{m-2}{m}} 
= \left\| u \right\|^{2}_{L^{\frac{2m}{m-2}}}, \\
&\scal(g) = u^{-\frac{m+2}{m-2}}\left( -4 \frac{m-1}{m-2} \Delta u +  
\scal(\ginit) u \right),
\end{split}
\end{equation}
where we set $c(m):= \frac{m+2}{4}$ and employed the notation for $L^{\frac{2m}{m-2}}(M,\ginit)$ norms in the second equation. 
From there we compute for the total scalar curvature functional 
\begin{align*}
\cS(g) &= \frac{1}{\vl(g)^{\frac{m-2}{m}}} \int_M \scal(g) \; \dv \\
&= \frac{1}{\left\| u \right\|^{2}_{\frac{2m}{m-2}}} \int_M u \left( -4 \frac{m-1}{m-2} 
\Delta u +  \scal(\ginit) u \right) \textup{dvol}_{\ginit} \\
&= \frac{1}{\left\| u \right\|^{2}_{\frac{2m}{m-2}}} \int_M -4 \frac{m-1}{m-2} u 
\Delta u +  \scal(\ginit) u^2  \textup{dvol}_{\ginit} \\
&= \frac{1}{\left\| u \right\|^{2}_{\frac{2m}{m-2}}} \int_M 4 \frac{m-1}{m-2} 
| \nabla u |^2 +  \scal(\ginit) u^2  \textup{dvol}_{\ginit}.
\end{align*}
The final equation is obtained after integration by parts, and there are no boundary 
terms, since $u\in \hho$ lies in the domain of the self-adjoint Friedrichs extension of the Laplacian.
A crucial fact is that for $u\in \hho$ and $\ginit$ feasible the expressions above are all bounded
despite a singularity of the Riemannian metric. Moreover, due to H\"older's inequality
\[ \left| \int_M \scal(\ginit) u^2  \dvinit \right| \leq \| \scal(\ginit) 
\|_{L^{\frac{m}{2}}} \cdot \| u \|_{L^{\frac{2m}{m-2}}}^2. \]
We conclude that, independently of the sign of scalar curvature, 
for $g$ conformal to $\ginit$ via $u \in \hho$, the total scalar curvature is bounded from below
\[ \cS(g) > -\infty. \]
Consequently, we find that the conformal invariant
\begin{align}\label{Yamabe-invariant-definition} 
\nu([\ginit]) = \inf \left\{ \cS(g): g = u^{\frac{4}{m-2}} \ginit, u \in 
\hho \right\}
\end{align}
is bounded from below.  Note that the `true' Yamabe invariant, where the infimum is 
taken over any larger set of positive functions appears to be unbounded because of the 
generally singular nature of curvature. 

\subsection{Evolution of the scalar curvature}
Before we proceed with geometric considerations concerning the scalar curvature, 
consider the Yamabe flow equation \eqref{YF-conf} for the conformal factor
$g(t) = u^{\frac{4}{m-2}}(t) \ginit$ more closely. Our analysis in \cite{BV} implies that $u\in \hho$. 
By Proposition \ref{du-dt} in fact we have even $\Delta u, \partial_t u \in \hho$. \medskip

Straightforward computations now give the following closed expressions 
for the evolution of $\scal(g(t))$ and the average scalar curvature $\rho(t)$, introduced in the 
normalization \eqref{NYF} (not to be confused with the total scalar curvature functional $\cS(g)$) 
along the normalized Yamabe flow
\begin{equation}\label{scal-rho-t}
\begin{split}
&\partial_t \scal(g) = (m-1) \Delta \scal(g) + \scal(g) (\scal(g) - \rho), \\
&\partial_t \rho = -\frac{m-2}{2 \vol(g)} \int_M (\scal(g) - \rho)^2 \dv.
\end{split}
\end{equation}
In particular, the average scalar curvature $\rho(t)$
decreases along the flow. 

The next result is classical.
Note that regularity of $\scal(g) \in \hho$ follows from
the transformation formula \eqref{transformation} and the fact $\Delta u \in \hho$.
 
\begin{lemma}\label{decrease}
If the initial scalar curvature $\scal (\ginit)\in \hhh$ is negative, bounded away from zero, 
i.e. $\scal(\ginit) < -b < 0$ for some fixed constant $b>0$, then 
the maximum of $\scal(g(t)) < 0$ decreases along the normalized Yamabe flow.
\end{lemma}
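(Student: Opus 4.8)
The plan is to run a parabolic maximum principle argument for the scalar curvature, using the evolution equation \eqref{scal-rho-t} together with the edge maximum principle of Theorem \ref{maxprin} to handle the behaviour at the singular stratum. First I would record that, by the regularity already noted (Proposition \ref{du-dt} and the transformation formula \eqref{transformation}), the function $R(t) := \scal(g(t))$ lies in $\hho$ for each $t$; in particular $R$ is continuous on the compact resolution $\widetilde{M}$ up to the edge, so its spatial maximum
\[ R_{\max}(t) := \max_{p \in \widetilde{M}} R(p,t) \]
is attained and depends locally Lipschitz-continuously on $t$. Since $\partial_t R \in \ho$ is continuous, I would then invoke Hamilton's trick: $R_{\max}$ is differentiable for a.e.\ $t$, and at such $t$ there is a maximizing point $p_t \in \widetilde{M}$ with $\tfrac{d}{dt}R_{\max}(t) \le (\partial_t R)(p_t,t)$.

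The decisive step is to evaluate the evolution equation
\[ \partial_t R = (m-1)\Delta R + R(R - \rho) \]
from \eqref{scal-rho-t} at the maximizing point $p_t$. Because $R(\cdot,t)$ attains its maximum at $p_t$ --- which need \emph{not} lie in the interior and may sit on the edge $\{x=0\}$ --- Theorem \ref{maxprin} guarantees $(\Delta R)(p_t,t) \le 0$, so the diffusion term enters with a favourable sign and drops out. This yields the differential inequality
\[ \frac{d}{dt} R_{\max}(t) \le R_{\max}(t)\big( R_{\max}(t) - \rho(t) \big). \]
I would then observe that the average scalar curvature never exceeds the spatial maximum, $\rho(t) \le R_{\max}(t)$, so the factor $R_{\max} - \rho$ is nonnegative.

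To close the argument I would use a continuity (barrier-in-time) argument. At $t=0$ we have $R(0) = \scal(\ginit) < -b < 0$, hence $R_{\max}(0) \le -b < 0$. Suppose, for contradiction, that $t_0$ is the first time with $R_{\max}(t_0) = 0$. On $[0,t_0)$ both $R_{\max} < 0$ and $R_{\max} - \rho \ge 0$ hold, so the right-hand side above is $\le 0$ and $R_{\max}$ is non-increasing there; by absolute continuity this forces $R_{\max}(t_0) \le R_{\max}(0) < 0$, a contradiction. Therefore $R_{\max}(t) < 0$ for all $t$, and the same inequality then shows $\tfrac{d}{dt}R_{\max} \le 0$, i.e.\ the maximum of $\scal(g(t))$ stays negative and decreases along the flow.

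The main obstacle, relative to the classical compact case, is precisely that the spatial maximum of $R(\cdot,t)$ can occur on the singular stratum, where the usual Hopf-type reasoning ($\Delta R \le 0$ at an interior maximum) is unavailable a priori. This is exactly what the edge maximum principle Theorem \ref{maxprin} supplies, but only for functions in $\hho$; so the genuine content is verifying that $R(t) \in \hho$ (provided by Proposition \ref{du-dt} and \eqref{transformation}) and checking that Hamilton's trick remains valid for functions that are merely continuous up to the edge with $\partial_t R \in \ho$. Everything else --- the differential inequality and the sign bookkeeping via $\rho \le R_{\max}$ --- is routine once these regularity points are in place.
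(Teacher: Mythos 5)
Your proposal is correct and takes essentially the same route as the paper: both derive the differential inequality $\partial_t \scal_{\max} \leq \scal_{\max}(\scal_{\max}-\rho)$ by evaluating the evolution equation \eqref{scal-rho-t} at a maximizing point (possibly on the edge) via Theorem \ref{maxprin}, and both exploit $\scal_{\max}-\rho \geq 0$ to first preserve negativity and then conclude that $\scal_{\max}$ decreases. The only cosmetic difference lies in the sign-preservation step --- the paper integrates the linear inequality with a Gronwall exponential factor and lets $\vep \to 0$, while you run a first-zero contradiction argument --- but these are equivalent ODE comparison arguments given the Lipschitz regularity of $\scal_{\max}$ that you correctly justify via Hamilton's trick.
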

\begin{proof}
Consider the function $\scal_{\max}(t) = \max_M \scal(g(t))$, which is
continuous and satisfies for $t>0$ the differential inequality (we use $\scal(g) \in \hho$
and the maximum principle in Theorem \ref{maxprin})
\[ \partial_t \scal_{\max} \leq \scal_{\max} (\scal_{\max} - \rho). \]
The quantity $\w (t) := \scal_{\max}(t) - \rho(t)$ is always non-negative.  We therefore have 
the differential inequality
\[ \partial_t \scal_{\max} \leq \w \, \scal_{\max}, \]
which integrates for any $\vep \in (0,t)$ to
\begin{align*}
\scal_{\max} (t) \leq \exp \left(\int^t_{\vep} \w(s) ds \right) \scal_{\max}(\vep). 
\end{align*}
Taking $\vep \to 0$, this implies the maximum scalar curvature is always negative.  Plugging this 
fact into the differential inequality
\[ \partial_t \scal_{\max} \leq \scal_{\max} (\scal_{\max} - \rho) \leq 0, \]
we conclude that in fact $\scal_{\max}$ decreases along the flow.
\end{proof}
Next, we show that in fact $\scal(g)$ approaches $\rho$ along the flow at an exponential 
rate. This result is well known in the surface case. Our proof uses strongly the maximum 
principle obtained in Theorem \ref{maxprin}.

\begin{prop} \label{prop:scalar-curv} If $\scal (\ginit)\in \hhh$
and there exists positive constants $a$ and $b$ where
\[ - a < \scal( \ginit ) < -b < 0, \]
then for a solution to the normalized Yamabe flow in $\hho(M \times [0,T])$,
\[ \| \scal(g(t)) - \rho(t) \|_{\infty, M} \leq C e^{-bt} \]
with the constant $C>0$ being independent of $T$.
\end{prop}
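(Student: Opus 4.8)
The plan is to bound the pointwise oscillation of the scalar curvature rather than estimating $\|\scal(g)-\rho\|_{\infty,M}$ directly. Since $\rho(t)$ is a volume average of $\scal(g(t))$, it always lies between $\scal_{\min}(t) := \min_M \scal(g(t))$ and $\scal_{\max}(t) := \max_M \scal(g(t))$; consequently $\|\scal(g(t))-\rho(t)\|_{\infty,M} \leq \scal_{\max}(t)-\scal_{\min}(t)$. Thus it suffices to show that the oscillation $\scal_{\max}-\scal_{\min}$ decays like $e^{-bt}$, and the stated estimate follows at once.

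First I would record the two one-sided differential inequalities for the extrema, exactly as in Lemma \ref{decrease}. Since $\scal(g)\in\hho$ (which follows from $\scal(\ginit)\in\hhh$ together with the transformation formula \eqref{transformation} and Proposition \ref{du-dt}), Theorem \ref{maxprin} gives $\Delta \scal(g)\leq 0$ at a spatial maximum and $\Delta\scal(g)\geq 0$ at a spatial minimum. Inserting this into the evolution equation \eqref{scal-rho-t} for $\scal(g)$ yields, for $t>0$,
\begin{equation*}
\partial_t \scal_{\max} \leq \scal_{\max}(\scal_{\max}-\rho), \qquad \partial_t \scal_{\min} \geq \scal_{\min}(\scal_{\min}-\rho),
\end{equation*}
where the derivatives are understood as the appropriate one-sided Dini derivatives of the Lipschitz functions $\scal_{\max},\scal_{\min}$. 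I would also invoke Lemma \ref{decrease} to guarantee $\scal_{\max}(t)\leq \scal_{\max}(0)<-b$ for all $t$, so that $\scal(g)$ remains uniformly negative along the flow.

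Subtracting and factoring is the heart of the argument. Writing $w(t):=\scal_{\max}(t)-\scal_{\min}(t)\geq 0$, a direct computation gives
\begin{equation*}
\partial_t w \leq \scal_{\max}(\scal_{\max}-\rho)-\scal_{\min}(\scal_{\min}-\rho) = w\,(\scal_{\max}+\scal_{\min}-\rho).
\end{equation*}
Because the average satisfies $\rho\geq \scal_{\min}$, the last factor obeys $\scal_{\max}+\scal_{\min}-\rho\leq \scal_{\max}\leq -b$, and since $w\geq 0$ this produces the clean inequality $\partial_t w\leq -b\,w$. Gronwall's inequality then gives $w(t)\leq e^{-bt}w(0)$, and setting $C:=\scal_{\max}(0)-\scal_{\min}(0)$, which depends only on the initial data and hence is independent of $T$, completes the proof.

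The point requiring the most care is the regularity bookkeeping needed to apply the edge maximum principle at both extrema: one must know $\scal(g(t))\in\hho$ so that Theorem \ref{maxprin} legitimately forces the correct sign of $\Delta\scal(g)$ at the spatial minimum as well as the maximum — the former being precisely the situation the barrier-free maximum principle of \S\ref{sec:maxprin} was built to handle on the singular edge. The only other delicate issue, justifying the differential inequalities for the merely Lipschitz functions $\scal_{\max},\scal_{\min}$ through Dini derivatives, is already implicitly used in Lemma \ref{decrease} and is handled identically here. I would emphasize that passing to the oscillation $w$ is what makes the estimate clean, since it cancels the $\rho$-terms and entirely avoids the awkward positive contribution $-\partial_t\rho\geq 0$ that would otherwise obstruct a direct estimate of $\partial_t(\scal_{\max}-\rho)$.
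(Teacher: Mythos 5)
Your proof is correct, and it diverges from the paper's in a way worth noting. The first half coincides: both you and the paper use Theorem \ref{maxprin} to get the one-sided inequalities $\partial_t \scal_{\max} \leq \scal_{\max}(\scal_{\max}-\rho)$ and $\partial_t \scal_{\min} \geq \scal_{\min}(\scal_{\min}-\rho)$, subtract, and conclude $\scal_{\max}-\scal_{\min} \leq c_0 e^{-bt}$ (your factorization $\partial_t w \leq w\,(\scal_{\max}+\scal_{\min}-\rho) \leq -b\,w$ is a cleaner algebraic route than the paper's term-by-term estimate, but it uses the same ingredients: $\scal_{\min}\leq\rho$ and $\scal_{\max}\leq -b$ from Lemma \ref{decrease}). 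The genuine difference is the second half. The paper does \emph{not} invoke the sandwich property to finish; instead it writes the evolution equation for $\scal-\rho$, which carries the nonlocal term $-\partial_t\rho = \frac{m-2}{2\vol(g)}\int_M(\scal-\rho)^2\,d\mu \geq 0$, discards that term in the favorable direction at the spatial minimum, applies the maximum principle a second time to get $\partial_t(\scal_{\min}-\rho) \geq -b\,(\scal_{\min}-\rho)$, integrates for an exponential lower bound on $\scal_{\min}-\rho$, and only then combines with the oscillation estimate. Your observation that $\rho$ is a volume average, hence $\scal_{\min}\leq\rho\leq\scal_{\max}$ and $\|\scal(g)-\rho\|_{\infty,M}\leq \scal_{\max}-\scal_{\min}$, makes that entire second step unnecessary; it is fully legitimate here (the volume is finite on a compact edge space and $\scal(g)$ is bounded), and indeed the paper itself already relies on both $\scal_{\max}-\rho\geq 0$ and $\scal_{\min}-\rho\leq 0$ inside its own estimate chain, so no new hypothesis is smuggled in. What your route buys is brevity and one fewer application of the maximum-principle machinery; what the paper's route buys is a direct pointwise estimate on $\scal_{\min}-\rho$ of the kind that would still function in situations where one needs the two deviations controlled separately rather than through the oscillation.
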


\begin{proof}
Consider the function $\scal_{\min}(t) = \min_M \scal(g(t))$
and the inequalities for the maximum and minimum scalar 
curvatures
\begin{align*}
&\partial_t \scal_{\max} \leq \scal_{\max} (\scal_{\max} - \rho), \\
&\partial_t \scal_{\min} \geq \scal_{\min} (\scal_{\min} - \rho). 
\end{align*}
Upon subtracting we obtain
\begin{align*}
\partial_t (\scal_{\max} - \scal_{\min} ) 
&\leq \scal_{\max} (\scal_{\max} - \rho) - \scal_{\min} (\scal_{\min} - \rho)\\
&\leq -b (\scal_{\max} - \rho) - \rho (\scal_{\min} - \rho) \\
&= -b (\scal_{\max} - \scal_{\min} ) + (\rho + b) (\rho - \scal_{\min}) \\
&\leq -b (\scal_{\max} - \scal_{\min} ),
\end{align*}
where in the second inequality we estimated the first summand 
on the right hand side using the fact that $(\scal_{\max} - \rho) \geq 0$,
$\scal$ is decreasing along the flow (by Lemma \ref{decrease}) and hence is bounded from above by $(-b)$.
For the second summand we used $(\scal_{\min} - \rho) \leq 0$. The last inequality
follows from $(\scal_{\min} - \rho) \leq 0$ and $(\rho + b) \leq 0$, with the latter being
a consequence of $\rho$ decreasing along the flow. \medskip

Integrating the inequality we find as in the proof of the previous Lemma
\begin{align}\label{maxmin}
(\scal_{\max} - \scal_{\min} ) \leq c_0 e^{-bt},
\end{align}
where $c_0$ depends only on the initial data.
Consequently it suffices to 
prove an appropriate lower bound for $(\scal_{\min}-\rho)$. 
The equations above allow us to write an evolution equation for $\scal - \rho$:
\[ \partial_t ( \scal - \rho ) = (m-1) \Delta \scal + \scal( \scal - \rho) +\frac{m-2}{2} 
\int_M (\scal(g) - \rho)^2 \dv. \]
Using the maximum principle and the fact that $(\scal(g) - \rho)^2\geq 0$, we may estimate
\[ \partial_t (\scal_{\min} - \rho) \geq \scal_{\min} (\scal_{\min} - \rho) \geq -b 
(\scal_{\min} - \rho), \]
where in the second inequality we used the fact that $(\scal_{\min} - \rho) \leq 0$, 
the scalar curvature decreases along the flow and
the initial scalar curvature is bounded from above by $(-b)$. Integrating the inequality 
proves an exponential lower bound for $(\scal_{\min} - \rho)$ and the statement now follows 
in view of \eqref{maxmin}. 
\end{proof}

\section{Uniform estimates of solutions to the edge Yamabe flow}\label{uniform-section}

In this section we establish a priori estimates for the solution $u\in \hho(M \times [0,T))$ of equation \eqref{flow2},
where we assume that $T< \infty$ is finite. If $T=\infty$, normalized Yamabe flow exists
for all times $t>0$ and uniform estimates are obsolete. \medskip
 
\subsection{Uniform estimate of $u$}
In order to obtain $L^{\infty}$ estimates, Ye \cite{Ye} used the 
elliptic maximum principle to obtain a differential inequality for 
the maximum and minimum values of the conformal factor as a function 
of time, which was then explicitly integrated to obtain the desired 
bounds.  We follow this approach and adapt Ye's proof to the edge setting.

\begin{prop} \label{prop:L-infinity-estimate} Suppose that $u$ is a 
maximal solution to the normalized Yamabe flow in $\hho(M \times [0,T))$.  
Suppose $\scal(\ginit) \in \hho(M)$ is negative bounded away from zero. 
Then there exists a constant $c > 0$, depending on $u(0), \rho(0), \max |\scal(\ginit)|$ 
and $\min |\scal(\ginit)|$, and being independent of the maximal existence time $T$, such that
$c^{-1}\leq u(p,t) \leq c$ for all $p \in M$ and $t\in [0,T)$.
\end{prop}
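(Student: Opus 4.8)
The plan is to integrate the flow in time after recasting it as a pointwise ordinary differential identity for $\log u$, and then to feed in the exponential decay of $\scal(g) - \rho$ already established in Proposition \ref{prop:scalar-curv}. The starting observation is that the normalized Yamabe flow $\partial_t g = (\rho - \scal(g))\,g$, written for the conformal factor $g = u^{\frac{4}{m-2}}\ginit$, collapses to the scalar pointwise identity
\[ \partial_t u = \frac{m-2}{4}\, u\,\bigl(\rho - \scal(g)\bigr); \]
one checks this is equivalent to \eqref{flow2} after using the transformation formula for $\scal(g)$ in \eqref{transformation}. Since $u$ is a \emph{positive} solution, we may divide by $u$ to obtain, pointwise on $M \times [0,T)$,
\[ \partial_t \log u = \frac{m-2}{4}\,\bigl(\rho(t) - \scal(g)(p,t)\bigr). \]
Crucially, no lower bound on $u$ is needed to form $\log u$; only positivity is used, so there is no circularity with the conclusion.

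First I would apply Proposition \ref{prop:scalar-curv}, whose hypotheses are exactly the present ones, namely $-a < \scal(\ginit) < -b < 0$. It yields a constant $C > 0$ \emph{independent of the time horizon} with $\|\scal(g(t)) - \rho(t)\|_{\infty, M} \leq C e^{-bt}$. Integrating the displayed identity for $\log u$ from $0$ to any $t < T$ then gives, for every $p$,
\[ |\log u(p,t) - \log u(p,0)| \leq \frac{m-2}{4}\int_0^t \|\scal(g(s)) - \rho(s)\|_{\infty,M}\, ds \leq \frac{m-2}{4}\cdot\frac{C}{b}. \]
The essential point is that the exponential weight makes the time integral converge as $t \to \infty$, so the right-hand side is bounded by a quantity depending only on the initial data. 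Using the normalization $u(p,0) = 1$ (or, in a restart, the uniform bound $\|\log u(0)\|_\infty$ coming from continuity of $u(0)$ on the compact resolution $\widetilde M$ together with its positivity), exponentiating produces the two-sided bound $c^{-1} \leq u(p,t) \leq c$ with $c = \exp\!\bigl(\tfrac{(m-2)C}{4b}\bigr)$, manifestly independent of $T$ and depending only on $u(0)$, $\rho(0)$, and the two-sided bounds on $\scal(\ginit)$.

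The heavy lifting has in fact already been carried out in Proposition \ref{prop:scalar-curv}, whose proof rested on the edge maximum principle of Theorem \ref{maxprin}; here the work reduces to a one-dimensional integration. The point requiring the most care is therefore the \emph{uniformity in $T$}: because the solution lives only on the half-open interval $[0,T)$, I would run the estimate on each compact subinterval $[0,T']$ with $T' < T$ and invoke the $T$-independence of the constant $C$ to pass to the supremum over $[0,T)$. A secondary technical matter is justifying the pointwise time-integration of $\partial_t \log u$: this is legitimate because $u \in \hho$ with $\partial_t u \in \ho$ (indeed $\partial_t u \in \hho$ under the standing regularity $\scal(\ginit)\in\hhh$, by Proposition \ref{du-dt}), so $t \mapsto \log u(p,t)$ is $C^1$ for each fixed $p$, while $\scal(g) \in \hho$ by \eqref{transformation}. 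I do not anticipate a genuine obstacle beyond bookkeeping the constants so that the final $c$ depends only on the advertised data.
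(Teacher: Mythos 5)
Your proof is correct, but it takes a genuinely different route from the paper's. The paper follows Ye's original scheme: it applies the edge maximum principle (Theorem \ref{maxprin}) directly to the flow equation \eqref{flow2} at spatial extrema of $u$, obtaining differential inequalities for $u_{\max}^{N-1}(t)$ and $u_{\min}^{N-1}(t)$; these are linear ODE inequalities (after dividing by $u_{\max}$, resp.\ $u_{\min}$) which are integrated explicitly using the monotonicity of $\rho(t)$ and the negativity of $\scal(\ginit)$. You instead collapse the flow to the pointwise identity $\partial_t \log u = \frac{m-2}{4}\left(\rho - \scal(g)\right)$ --- which is exactly the identity the paper exploits later, in Proposition \ref{bound-time-derivative} --- and then import the $T$-independent exponential decay $\| \scal(g(t)) - \rho(t)\|_{\infty, M} \leq C e^{-bt}$ from Proposition \ref{prop:scalar-curv}. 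There is no circularity in doing so: Proposition \ref{prop:scalar-curv} precedes this proposition, and its proof (via Lemma \ref{decrease} and Theorem \ref{maxprin}) nowhere uses an $L^{\infty}$ bound on $u$; moreover the lower bound on $u$ needed to invoke Proposition \ref{du-dt} on a compact subinterval $[0,T']$ is automatic from continuity and positivity on $\widetilde{M} \times [0,T']$. Your argument is shorter, and it has the pleasant byproduct that Proposition \ref{bound-time-derivative} becomes an immediate corollary (multiply the identity back by $u$). What it costs is regularity: Proposition \ref{prop:scalar-curv}, and the chain behind it (the evolution equations \eqref{scal-rho-t}, the regularity $\scal(g)\in\hho$, Proposition \ref{du-dt}), requires $\scal(\ginit)\in\hhh(M)$, the standing assumption of \S\ref{scal-section}, whereas the proposition as literally stated --- and the paper's self-contained maximum-principle proof --- only needs $\scal(\ginit)\in\hho(M)$. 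So under the proposition's stated hypothesis your proof does not run, while under the paper's standing assumptions (and those of Theorem \ref{BV}, where it is actually applied) it does.
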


\begin{proof}
For what follows we use the fact that $u_{\min}(t)$, $u_{\max}(t)$ are positive.
This is clear from the fact that $u^{\frac{2}{m-2}} = e^{v}$, where
$v \in \hho$ is the solution to the second equation in \eqref{YF-conf}
obtained in \cite{BV}. Let us assume that $u(t)$ attains its maximum at 
$p_{\max}(t) \in \widetilde{M}$, and its minimum at $p_{\min}(t) \in \widetilde{M}$.
Note 
\begin{align*}
&\partial_t u_{\max}(t) \equiv \partial_t u(p_{\max}(t), t) = 
\partial_1 u(p_{\max}(t), t) + (\partial_t u)(p_{\max}(t), t) 
\leq (\partial_t u)(p_{\max}(t), t), \\
&\partial_t u_{\min}(t) \equiv \partial_t u(p_{\min}(t), t) = 
\partial_1 u(p_{\min}(t), t) + (\partial_t u)(p_{\min}(t), t) 
\geq (\partial_t u)(p_{\min}(t), t).
\end{align*}
Hence the maximum principle in Theorem \ref{maxprin},
applied to \eqref{flow2} yields
\begin{equation}\label{est}
\begin{split}
&\frac{d u_{\min}^N (t) }{dt} \geq c(m) \min |\scal(\ginit)| \, u_{\min}(t) + c(m) \rho \, u_{\min}^N(t), \\
&\frac{d u_{\max}^N (t)}{dt} \leq c(m) \max |\scal(\ginit)| \, u_{\max}(t) + c(m) \rho \, u_{\max}^N(t).
\end{split}
\end{equation}
\textbf{Estimates for the minimum function $u_{\min}$.}
The key point is this is almost a linear differential inequality in $u_{\min}^{N-1}$. 
To see this we rewrite the left hand side in the first inequality in \eqref{est} as 
\begin{equation}
N u_{\min}^{N-1} \frac{d u_{\min} }{dt} \geq c(m) \min |\scal(\ginit)| u_{\min}(t) + c(m) \rho \, u_{\min}^N(t), 
\end{equation}
and then divide by $u_{\min}$ (note that $u_{\min}$ is positive so that this does not change 
the sign of the inequality) to see that
\begin{equation}
N u_{\min}^{N-2} \frac{d u_{\min} }{dt} \equiv \frac{N}{N-1} \frac{d u_{\min}^{N-1} }{dt}  
\geq c(m) \min |\scal(\ginit)| + c(m) \rho \, u_{\min}^{N-1}(t).
\end{equation}
This is now a linear differential inequality in $w = w(t) = u_{\min}^{N-1}(t)$

\begin{equation}
\begin{split}
w' &\geq \min |\scal(\ginit)| + \rho \, w \\
&\geq \min |\scal(\ginit)| + \rho(0) \, w=:  a + b w,
\end{split}
\end{equation}
where in the last inequality we used the fact that $\rho(t)$ decreases along the flow.
An inequality of the form $w' - bw \geq a$ can be rewritten as 
$(e^{-bt} w)' \geq a e^{-bt}$ and integrated over $[0,t]$. This gives 
\[ e^{-bt} w(t) - w(0) \geq \frac{a}{b} (1 - e^{-bt} ). \]
Writing out $a,b$ and $w$ we find (note $\rho(0)<0$)
\begin{equation}\label{estimate1}
\begin{split}
u_{\min}^{N-1}(t) &\geq u_{\min}^{N-1}(0) e^{\rho(0) t} + \frac{\min |\scal(\ginit)|}{\rho(0)} 
(e^{\rho(0) t}-1) \\ &\geq u_{\min}^{N-1}(0) + \frac{\min |\scal(\ginit)|}{|\rho(0)|}(1 - e^{\rho(0) t}).
\end{split}
\end{equation}
\textbf{Estimates for the maximum function $u_{\max}$.}
Consider the second inequality in \eqref{est}. Using the fact that the total 
scalar curvature $\rho(t)$ is negative and decreasing along the flow, while $u_{\max}$
is positive, we may estimate
\begin{equation}
\frac{d u_{\max}^N (t)}{dt} \leq c(m) \max |\scal(\ginit)| \, u_{\max}(t) + c(m) \rho \, u_{\max}^{N}.
\end{equation}
Dividing both sides of the inequality as before by $u_{\max}$, and using again the fact 
that $\rho(t)$ is decreasing along the flow, we obtain
\begin{equation}
 \frac{d u_{\max}^{N-1} }{dt}  \leq \frac{N}{N-1} \frac{d u_{\max}^{N-1} }{dt}  
\leq c(m) \max |\scal(\ginit)| + c(m) \rho(0) \, u_{\max}^{N-1}.
\end{equation}
Let us write $v = u_{\max}^{N-1}$ and 
abbreviate the  equation above as $v' \leq A + Bv$. As before, 
this inequality can be integrated to give 
\[ v(t) \leq \left(v(0) + \frac{A}{B}\right) e^{Bt} - \frac{A}{B}. \]
Note that $B = c(m) \rho(0) < 0$, and $A/B =\max |\scal(\ginit)|/\rho(0) < 0$. 
Consequently, $e^{Bt}<1$ and we find by reinserting $v = u_{\max}^{N-1}$ into the equation
\begin{align}\label{estimate2}
u_{\max}^{N-1}(t) \leq u_{\max}^{N-1}(0) + \frac{\max |\scal(\ginit)|}{|\rho(0)|}. 
\end{align}
\end{proof}

Note that our estimates rule out growth of solutions with time. 

\subsection{Uniform estimates of the time derivative of $u$}

Our next result establishes uniform bounds for the time derivative of $u$.
The proof works exactly the same way in the setting of compact smooth 
manifolds and does not employ the Krylov-Safonov estimates, which were a key  ingredient in the corresponding argument of Ye \cite{Ye} in the derivation of 
higher order uniform bounds. Surprisingly, such an observation has not been 
made elsewhere in the literature.

\begin{prop} \label{bound-time-derivative} Suppose that $u$ is a 
maximal solution to the normalized Yamabe flow in $\hho(M \times [0,T))$.  
Suppose $\scal (\ginit)\in \hhh$ is negative bounded away from zero. 
Then there exists a constant $c > 0$, depending on 
$u(0), \rho(0), \scal(\ginit)$ and independent of the maximal existence time $T$, such that
$\| \partial_t u \|_{\infty, M} \leq ce^{-bt}$.
\end{prop}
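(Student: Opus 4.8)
The plan is to observe that, for the normalized flow written in conformal form, the time derivative of the conformal factor is controlled pointwise by the deviation of the scalar curvature from its average, and then to feed in the two a priori estimates already established. Concretely, writing $g(t)=u^{\frac{4}{m-2}}\ginit$ and differentiating gives $\partial_t g = \frac{4}{m-2}\,u^{-1}(\partial_t u)\,g$; comparing with the normalized Yamabe flow $\partial_t g = (\rho-\scal(g))\,g$ produces the identity
\[
\partial_t u = \frac{m-2}{4}\,u\,\bigl(\rho-\scal(g)\bigr),
\]
which one checks is consistent with the scalar equation \eqref{flow2} via the transformation formula for $\scal(g)$ in \eqref{transformation}. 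This is the crux: it reduces an $L^\infty$ bound on $\partial_t u$ to bounds on the two factors $u$ and $\rho-\scal(g)$, both of which are already in hand.

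First I would invoke Proposition \ref{prop:L-infinity-estimate} to get the two-sided bound $c^{-1}\leq u(p,t)\leq c$ on $M\times[0,T)$ with $c$ independent of $T$; the hypothesis $\scal(\ginit)\in\hhh\subset\hho$ negative and bounded away from zero is exactly what that proposition needs. Next, since $\scal(\ginit)\in\hhh$ is continuous it is also bounded below, so $-a<\scal(\ginit)<-b<0$ for suitable $a,b>0$, and Proposition \ref{prop:scalar-curv} yields $\|\scal(g(t))-\rho(t)\|_{\infty,M}\leq C e^{-bt}$ with $C$ independent of $T$. Combining the identity with these two estimates gives
\[
\|\partial_t u(\cdot,t)\|_{\infty,M}\leq \frac{m-2}{4}\,\|u(\cdot,t)\|_{\infty}\,\|\rho(t)-\scal(g(t))\|_{\infty,M}\leq \frac{m-2}{4}\,c\,C\,e^{-bt},
\]
which is the claim with final constant $\tfrac{m-2}{4}cC$, manifestly independent of $T$.

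The substantive work here is entirely carried by the two cited propositions --- the uniform bound on $u$, which rests on the edge maximum principle (Theorem \ref{maxprin}) together with Ye's differential-inequality argument, and the exponential convergence $\scal(g)\to\rho$. By contrast the identity for $\partial_t u$ is elementary once the flow is in conformal form, so I expect no real obstacle beyond bookkeeping; the only point needing care is confirming that the constants inherited from the two inputs carry no hidden dependence on $T$, which holds because both input estimates are explicitly $T$-independent. As the surrounding remarks stress, the payoff of this route is that it avoids the Krylov-Safonov estimates altogether and applies verbatim in the smooth compact case as well.
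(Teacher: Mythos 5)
Your proposal is correct and follows essentially the same route as the paper: both derive the pointwise identity $\partial_t u = \frac{m-2}{4}\,u\,(\rho - \scal(g))$ directly from the normalized flow written for the conformal factor, and then conclude by combining the uniform bound on $u$ from Proposition \ref{prop:L-infinity-estimate} with the exponential decay of $\|\scal(g)-\rho\|_{\infty,M}$ from Proposition \ref{prop:scalar-curv}. The paper's only cosmetic difference is that it obtains the identity by dividing the flow equation by $L\,u^{L-1}$ with $L=\frac{4}{m-2}$ rather than by differentiating $g=u^{\frac{4}{m-2}}\ginit$, which is the same computation.
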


\begin{proof}
The central point is \emph{not} to try to present the normalized Yamabe flow
equation as a parabolic equation. We consider \eqref{NYF} and 
write it as an equation for the conformal factor with $g(t)=u^{\frac{4}{m-2}} \ginit$.
Dividing both sides of the flow equation by\footnote{Note that $u$ is shown 
to be bounded away from zero.} $L \, u^{L-1}$ where we write $L=\frac{4}{m-2}$,
we obtain
\begin{align}
\partial_t u = \frac{m-2}{4} \bigl(\rho(t) - \scal(g(t))\bigr) u.
\end{align}
Taking supremum norms in space on both sides, we find
\begin{align}
\| \partial_t u \|_{\infty, M} = \frac{m-2}{4} 
\| \rho(t) - \scal(g(t)) \|_{\infty, M}  \cdot \| u \|_{\infty, M}.
\end{align}
In view of the estimates established in Proposition \ref{prop:scalar-curv} 
and Proposition \ref {prop:L-infinity-estimate}, we conclude the statement.
\end{proof}

\subsection{Uniform H\"older regularity of $u$}
We can now establish uniform H\"older regularity of $u$.
Linearizing \eqref{flow2} for $u$ around $u(0)=1$, we write $u=1+u'$ and find
\begin{equation}\label{u-hoelder-eqn}
\begin{split}
(\partial_t - (m-1)\Delta) u' = Q\{\scal (\ginit), \rho(t), u', \partial_t u'\},
\end{split}
\end{equation}
where $Q$ is some polynomial combination of the terms in the brackets
which have uniform time-independent $L^\infty$ bounds by Propositions 
\ref{prop:L-infinity-estimate} and \ref{bound-time-derivative}. Rescaling time, we 
conclude from Proposition \ref{mapping2}
\begin{align}
u=e^{t\Delta}Q\{\scal (\ginit), u', \partial_t u'\} \in \mathcal{C}^{1+\A}_{\textup{ie}}(M\times [0,T]).
\end{align} 
We can in fact conclude more. Add $(m-1) u'$ on both sides of the equation 
\eqref{u-hoelder-eqn} and obtain
\begin{equation}
\begin{split}
\left(\partial_t - (m-1)(\Delta-1)\right) u' = Q'\{\scal (\ginit), \rho(t), u', \partial_t u'\},
\end{split}
\end{equation}
where as before, the term $Q'$ on the right hand side of the linearized equation
is still $L^\infty$ bounded with a time-independent bound. Writing 
$u=e^{t(\Delta-1)}Q'\{\scal (\ginit), u', \partial_t u'\}$, we may in 
fact conclude that the H\"older norm of $u$ is bounded uniformly in $T$.
Indeed, $(-\Delta+1)$ is discrete with spectrum $\geq 1$ and we can conclude that
pointwise the heat kernel is converging exponentially to zero as $t\to \infty$.
As a consequence of this, one can repeat the H\"older space estimates from 
Proposition \ref{mapping2} to conclude that $e^{t(\Delta-1)}: \mathscr{B}(M\times [0,T]) \to 
\mathcal{C}^{1+\A}_{\textup{ie}}(M\times [0,T])$ is a bounded operator
with operator norm bounded independently of $t$ and $T$. We have proved the following theorem.

\begin{thm}\label{u-hoelder}
Assume $\scal (\ginit)\in \hhh(M)$ is negative bounded away from zero.
The solution $u\in \hho(M\times [0,T))$ in fact is 
$u\in \mathcal{C}^{1+\A}_{\textup{ie}}(M\times [0,T])$ with a bound on its H\"older norm
being independent of the maximal existence time $T$.
\end{thm}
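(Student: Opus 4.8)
The plan is to linearise the flow about the constant solution $u\equiv 1$ and to use the flow equation itself to trade the uncontrolled second-order term for the time derivative, which has already been estimated.

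First I would write $u = 1 + u'$, so that $u'(0)=0$ by the normalisation $u(t=0)=1$, and single out the constant-coefficient parabolic operator $\partial_t - (m-1)\Delta$. Applying it to $u'$ annihilates the constant, and solving \eqref{flow2} for $(m-1)\Delta u$ gives the key identity
\[
(\partial_t - (m-1)\Delta)u' = (1 - u^{N-1})\,\partial_t u - \tfrac{c(m)}{N}\,\scal(\ginit)\,u + \tfrac{c(m)}{N}\,\rho\,u^N =: Q.
\]
The decisive feature is that every second-order term has now disappeared in favour of $\partial_t u$ and zeroth-order data. A priori I control neither $\Delta u$ nor the full Hessian uniformly, but Proposition \ref{bound-time-derivative} bounds $\partial_t u$, Proposition \ref{prop:L-infinity-estimate} bounds $u$ away from $0$ and $\infty$, the datum $\scal(\ginit)$ is fixed, and $\rho$ stays bounded since it decreases while remaining squeezed between the uniformly controlled extremal scalar curvatures of Proposition \ref{prop:scalar-curv}. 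Hence $Q$ lies in the space $\mathscr{B}$ of bounded functions, with a bound independent of the maximal existence time $T$.

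Next, since $u'(0)=0$, I would represent $u'$ by its Duhamel formula $u' = \int_0^t e^{(t-s)(m-1)\Delta}Q\,ds$ and invoke Proposition \ref{mapping2}, which gives $e^{t\Delta}\colon \mathscr{B}\to\mathcal{C}^{1+\A}_{\textup{ie}}$; after the harmless time rescaling absorbing the factor $(m-1)$ this already shows $u = 1 + u' \in \mathcal{C}^{1+\A}_{\textup{ie}}(M\times[0,T])$.

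The hard part will be upgrading this to a bound that is \emph{independent of $T$}. The plain heat semigroup is only an $L^\infty$ contraction, so the convolution in $Q$ can accumulate linearly in $t$ and the naive estimate degenerates as $T\to\infty$. To remedy this I would add $(m-1)u'$ to both sides, rewriting \eqref{u-hoelder-eqn} as
\[
(\partial_t - (m-1)(\Delta-1))\,u' = Q' := Q + (m-1)u',
\]
where $Q'$ is still uniformly bounded in $\mathscr{B}$ because $u'$ is. Theorem \ref{heat-expansion} forces $\Delta$ to have discrete spectrum, so the shifted operator $-(\Delta-1) = -\Delta + 1$ has spectral gap at least $1$ and $e^{t(\Delta-1)} = e^{-t}e^{t\Delta}$ carries an exponentially decaying prefactor. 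I would then rerun the front-face and temporal-diagonal estimates underlying Proposition \ref{mapping2}, now with the extra weight $e^{-(t-s)}$ inside the convolution; the resulting time integral $\int_0^t e^{-(t-s)}\,ds \leq 1$ converges uniformly, so that $e^{t(\Delta-1)}\colon\mathscr{B}\to\mathcal{C}^{1+\A}_{\textup{ie}}$ is bounded with operator norm independent of both $t$ and $T$. Applying this to $u' = \int_0^t e^{(t-s)(m-1)(\Delta-1)}Q'\,ds$ yields the claimed uniform $\mathcal{C}^{1+\A}_{\textup{ie}}$ bound on $u'$, and hence on $u$. The one genuinely technical point is to check that inserting the decaying factor does not interfere with the local H\"older estimates near the blown-up faces of $\mathscr{M}^2_h$; since the temporal decay only improves integrability and does not couple to the spatial edge singularity, I expect this to follow by direct inspection of the estimates in Theorem \ref{mapping}.
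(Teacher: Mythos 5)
Your proposal is correct and follows essentially the same route as the paper: linearizing around $u\equiv 1$, using the flow equation \eqref{flow2} to trade the second-order term $\Delta u$ for the already-controlled $\partial_t u$ so that the right-hand side lies in $\mathscr{B}$ with a $T$-independent bound, applying Proposition \ref{mapping2}, and then shifting to $\Delta-1$ to exploit the spectral gap of the Friedrichs extension and make the operator norm of $e^{t(\Delta-1)}\colon \mathscr{B}\to\mathcal{C}^{1+\A}_{\textup{ie}}$ uniform in $t$ and $T$. Your version is in fact somewhat more explicit than the paper's (the closed form of $Q$, the Duhamel representation, and the $e^{-(t-s)}$ weight in the convolution are spelled out rather than asserted), but the ideas coincide.
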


\section{Long-time existence of the edge Yamabe flow}\label{long-section}

We are ready to complete the long-time existence argument. First, we employ 
a parabolic regularity argument in Proposition \ref{prop:mainreg} to get a regularity jump 
for the solution $u$ from $\mathcal{C}^{1+\A}_{\textup{ie}}$ to $\hho$. 
This proceeds parallel to the argument of Proposition \ref{prop:dudt}. 
Rewrite flow equation \eqref{flow2} as
\begin{align}\label{flow-3}
\partial_t u - \frac{(m-1)}{N} u^{1-N} \Delta u = \frac{c(m)}{N} \left( \rho \, u - \scal(\ginit) u^{2-N}\right). 
\end{align}
Treat the right hand side of this equation as a fixed element of $\ho(M \times [0,T])$, where we note that $\rho\in \ho([0,T])$.
Since  $\frac{(m-1)}{N} u^{1-N} \in \mathcal{C}^{1+\A}_{\textup{ie}}$ is positive and uniformly 
bounded away from zero, we may apply the parabolic regularity result in Proposition \ref{prop:mainreg} to 
obtain a solution $v \in \hho$ with initial condition $v(0)=1$
 \[ \partial_t v - \frac{(m-1)}{N} u^{1-N} \Delta v = \frac{c(m)}{N} \left( \rho \, u - \scal(\ginit) u^{2-N}\right). \]
Note that $w:= u-v$ solves $\partial_t w - \frac{(m-1)}{N} u^{1-N} \Delta w = 0$ with zero initial condition.
By the maximum principle $\partial_t w_{\max} \leq 0$ and $\partial_t w_{\min} \geq 0$. Due to the 
initial condition $w(0)=0$, we deduce $w\equiv 0$ and hence $u=v \in \hho(M \times [0,T])$. 
Now apply Proposition \ref{prop:dudt} to conclude that $u \in \hhh(M \times [0,T])$. \medskip

Consider $u_0= u(T) \in \hhh(M)$. We wish to restart the Yamabe flow \eqref{flow2}
with the initial condition $u(0)=u_0$. Consider $e^{t\Delta}$ acting on $\hhh(M)$ without convolution
in time. Then $e^{t\Delta}$ maps $\hhh(M)$ to $\hhh(M\times \R^+)$ without any gain in H\"older regularity,
since due to absence of time integration, the estimates in \cite{BV} proceed without two additional front face powers 
in the asymptotics of the integral kernels on $\mathscr{M}^2_h$. Note that $\Delta u_0 \in \hho(M)$ and
by uniqueness of solutions to the heat equation $\Delta e^{t\Delta} u_0 = e^{t\Delta} \Delta u_0 \in \hho(M \times \R^+)$. Consequently, 
$\partial_t e^{t\Delta} u_0 = e^{t\Delta} \Delta u_0 \in \hho(M \times \R^+)$. \medskip

We write $u=u' + e^{t\Delta} u_0$ and plug this into the Yamabe flow equation \eqref{flow2},
with rescaled time $\tau= (t-T)$. Linearizing around $u'$, we find as before in
Proposition \ref{prop:dudt}
\begin{align}
\left[ \partial_t - (m-1) (e^{t\Delta}u_0)^{1-N}\Delta\right] u'=
Q_1(u') + Q_2(u',\partial_t u'), \quad u'(0)=0,
\end{align}
where $Q_1$ and $Q_2$ denotes linear and quadratic combinations of the 
elements in brackets, respectively, with coefficients given by polynomials
in $e^{t\Delta}u_0$ and $\partial_t e^{t\Delta}u_0, \Delta e^{t\Delta} u_0$.
Since these coefficients are of higher H\"older regularity $\hho(M)$, we may set
up a contraction mapping argument in $\hho$ and extend $u$ past the maximal 
existence time $T$ ad verbatim to the proof of Proposition \ref{prop:dudt}. 
This proves long-time existence. 

\begin{thm}\label{long-thm} 
Suppose $\ginit$ is a feasible edge metric, such that its scalar curvature $\scal(\ginit)\in \hhh(M)$ is negative, and hence satisfies $- a < \scal( \ginit ) < -b < 0$ for some  
positive constants $a$ and $b$. Then the normalized Yamabe flow \eqref{NYF} admits a solution
$g(t) = u^{\frac{4}{m-2}} \ginit$ with $u\in \hho (M \times [0,\infty))$.
\end{thm}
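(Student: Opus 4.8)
The plan is to run the standard continuation argument and show that the maximal existence interval cannot be finite. By the short-time existence result from \cite{BV} (recorded in Theorem \ref{BV}) together with the uniqueness proved in Section \ref{unique-section}, there is a well-defined maximal time $T_M \in (0,\infty]$ and a unique solution $u \in \hho(M \times [0,T_M))$ of \eqref{flow2} with $u(0)=1$. I would argue by contradiction, assuming $T_M < \infty$, and manufacture a solution on a strictly larger interval.

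First I would assemble the uniform a priori estimates already in hand. Propositions \ref{prop:L-infinity-estimate} and \ref{bound-time-derivative} give bounds $c^{-1} \leq u \leq c$ and $\| \partial_t u \|_{\infty} \leq c e^{-bt}$ that are independent of $T < T_M$, and Theorem \ref{u-hoelder} upgrades these to a uniform $\hoone$ bound. The next task is a regularity bootstrap. Rewriting the flow as in \eqref{flow-3}, the coefficient $a := \frac{(m-1)}{N} u^{1-N}$ lies in $\hoone$ and is positive and bounded away from zero, while the right-hand side lies in $\ho$; Proposition \ref{prop:mainreg} then produces a solution $v \in \hho$ of the same linear problem, and the maximum principle argument of Proposition \ref{prop:dudt} applied to $w = u - v$ (using Theorem \ref{maxprin}) forces $u = v \in \hho$. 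With $a$ now of $\hho$ regularity, Corollary \ref{prop:mainreg2} together with Proposition \ref{prop:dudt} raises the regularity once more to $u \in \hhh(M \times [0,T_M))$, and crucially all of these bounds are uniform up to $T_M$.

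These uniform estimates let me extend the solution to the closed interval, so that $u(\cdot,t)$ converges as $t \to T_M^-$ to a limit $u_0 := u(\cdot,T_M) \in \hhh(M)$ that remains positive and bounded away from zero. The final and most delicate step is to \emph{restart} the flow from $u_0$. Because the short-time theory was set up for the datum $u \equiv 1$, and because the evolving metric need not remain feasible, one cannot directly reapply that result; instead I would work throughout with respect to the fixed metric $\ginit$. Writing $u = u' + e^{t\Delta}u_0$ with $u'(0) = 0$, where $e^{t\Delta}u_0$ (without convolution in time) carries the initial data and inherits the high regularity of $u_0$, substituting into \eqref{flow2} and linearizing yields a fixed-point equation
\[
\left[\partial_t - (m-1)(e^{t\Delta}u_0)^{1-N}\Delta\right] u' = Q_1(u') + Q_2(u',\partial_t u'), \quad u'(0)=0,
\]
whose coefficients are polynomials in $e^{t\Delta}u_0$, $\partial_t e^{t\Delta}u_0$ and $\Delta e^{t\Delta}u_0$, all of $\hho$ regularity. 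Using the mapping properties of the heat operator from Theorem \ref{mapping}, this can be solved by a contraction mapping in $\hho$ on a short interval $[T_M, T_M + \delta)$, exactly as in the proof of Proposition \ref{prop:dudt}, contradicting the maximality of $T_M$ and forcing $T_M = \infty$.

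I expect this restart to be the main obstacle. One must verify that $e^{t\Delta}u_0$ is regular enough in space and time to serve as an admissible variable coefficient, that the quadratic nonlinearity $Q_2(u',\partial_t u')$ is genuinely controlled by a contraction estimate in $\hho$, and that the construction is uniform enough to guarantee a fixed minimal step $\delta$ rather than $\delta \to 0$. The uniform a priori estimates above and the parabolic regularity theory of Section \ref{parabolic-section} are precisely what make this contraction argument work.
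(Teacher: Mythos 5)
Your proposal is correct and follows essentially the same route as the paper: uniform $L^\infty$, $\partial_t$ and $\hoone$ bounds, the bootstrap to $\hho$ and then $\hhh$ via Proposition \ref{prop:mainreg}, Corollary \ref{prop:mainreg2} and the maximum-principle comparison of Proposition \ref{prop:dudt}, followed by the restart ansatz $u = u' + e^{t\Delta}u_0$ (heat operator without time convolution) and a contraction argument in $\hho$ for the linearized equation, all with respect to the fixed metric $\ginit$. The only cosmetic difference is that you phrase the conclusion as a contradiction at a maximal time $T_M$, whereas the paper directly extends the solution past $T$; these are the same argument.
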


\section{Convergence of the edge Yamabe flow}\label{convergence-section}

We now prove the convergence part of Theorem \ref{BV}.

\begin{thm}\label{convergence-thm} 
Suppose $\ginit$ is a feasible edge metric such $\scal(\ginit)\in \hhh(M)$ and  $\scal(\ginit) < 0$. Suppose that $u(t)$ is a solution to the normalized Yamabe flow \eqref{NYF} that exists for all time.  
Then the associated metric $g(t) = u^{\frac{4}{m-2}} \ginit$ converges to a metric with constant negative curvature.
\end{thm}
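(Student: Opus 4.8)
The plan is to produce limits $\rho_\infty$ and $u_\infty$ of the average scalar curvature $\rho(t)$ and the conformal factor $u(t)$ as $t\to\infty$, and then to identify $g_\infty=u_\infty^{\frac{4}{m-2}}\ginit$ as a metric of constant scalar curvature $\rho_\infty<0$ by passing the scalar curvature formula \eqref{transformation} to the limit. First I would show $\rho(t)$ converges to a finite negative value. The second equation in \eqref{scal-rho-t} shows $\rho$ is non-increasing, so $\rho(t)\leq\rho(0)<0$ for all $t$; bounding the spatial average by the supremum and inserting the exponential estimate of Proposition \ref{prop:scalar-curv},
\[
|\partial_t\rho|=\frac{m-2}{2\vol(g)}\int_M(\scal(g)-\rho)^2\,\dv\leq\frac{m-2}{2}\,\|\scal(g(t))-\rho(t)\|_{\infty,M}^2\leq C\,e^{-2bt},
\]
so $\partial_t\rho\in L^1([0,\infty))$ and $\rho(t)\to\rho_\infty:=\rho(0)+\int_0^\infty\partial_t\rho\,dt\leq\rho(0)<0$. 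Proposition \ref{prop:scalar-curv} then upgrades this to uniform convergence $\scal(g(t))\to\rho_\infty$ on $M$.

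Next I would prove uniform convergence of the conformal factor. By Proposition \ref{bound-time-derivative} we have $\|\partial_t u\|_{\infty,M}\leq c\,e^{-bt}$, which is integrable in time, so for $t_1<t_2$ one has $\|u(\cdot,t_2)-u(\cdot,t_1)\|_{\infty,M}\leq\int_{t_1}^{t_2}\|\partial_t u\|_{\infty,M}\,ds\leq\frac{c}{b}\,e^{-bt_1}$. Hence $u(\cdot,t)$ is Cauchy in $L^\infty(M)$ and converges uniformly to some $u_\infty$ with $c^{-1}\leq u_\infty\leq c$ by Proposition \ref{prop:L-infinity-estimate}. The uniform $\hoone$ bound of Theorem \ref{u-hoelder}, combined with the compact inclusion into a weaker H\"older space, then shows $u_\infty\in\hoone$ and that the convergence in fact holds in $\mathcal{C}^{1+\A'}_{\textup{ie}}$ for every $\A'<\A$.

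Finally I would identify the limit. Rewriting \eqref{transformation} as
\[
-\frac{4(m-1)}{m-2}\Delta u+\scal(\ginit)\,u=\scal(g(t))\,u^{N},
\]
the right-hand side converges uniformly to $\rho_\infty u_\infty^{N}$, while $\scal(\ginit)u\to\scal(\ginit)u_\infty$ uniformly and $\Delta u\to\Delta u_\infty$ distributionally (uniform convergence is convergence of distributions, on which $\Delta$ is continuous). Thus $u_\infty$ solves, in the distributional sense, $-\frac{4(m-1)}{m-2}\Delta u_\infty+\scal(\ginit)\,u_\infty=\rho_\infty\,u_\infty^{N}$, which is exactly $\scal(g_\infty)=\rho_\infty$. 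Since the right-hand side lies in $\ho$ and $u_\infty\in\hoone$, this identity forces $\Delta u_\infty\in\ho$, whence $u_\infty\in\hho(M)$; by the invariance lemma of \S\ref{invariance}, $g_\infty$ is an incomplete edge metric, and it carries constant negative scalar curvature $\rho_\infty$.

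The main obstacle I anticipate is this final step: justifying that the comparatively weak (uniform, upgraded to $\hoone$) convergence of $u$ suffices to pass the scalar curvature equation to the limit and to recover the regularity of $u_\infty$. The point to handle carefully is that one need not establish convergence of $\Delta u$ in any H\"older norm: it is enough to pass to the limit distributionally and then bootstrap $u_\infty$ from the limiting elliptic equation, whose right-hand side is H\"older. If convergence of the metric in $\hho$ itself is wanted, one would combine the uniform second-order bounds along the flow from \S\ref{long-section} with interpolation against the first-order convergence just established.
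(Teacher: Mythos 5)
Your proposal is correct, and it reaches the conclusion by a genuinely different route from the paper at the crucial step. Both arguments begin the same way: Proposition \ref{prop:scalar-curv} and Proposition \ref{bound-time-derivative} give exponential decay of $\scal(g(t))-\rho(t)$ and of $\partial_t u$, hence uniform convergence $u(t)\to u_\infty$ with the two-sided bounds of Proposition \ref{prop:L-infinity-estimate}. The divergence is in how second-order regularity of the limit is obtained so that $\scal(g_\infty)$ makes sense. The paper proves \emph{uniform-in-time} $\hho$ bounds on $u(t)$ by a parabolic argument: it rewrites the flow using $(\Delta-1)$, exploits the exponential decay of the associated heat operator to make the parametrix bounds of \S\ref{parabolic-section} uniform on $[0,\infty)$, and then extracts a subsequence of $\{u(t_n)\}$ converging in $\mathcal{C}^{2+\A'}_{\textup{ie}}(M)$ by compactness. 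You never prove second-order bounds along the flow at all: you pass the \emph{static} transformation formula $-\tfrac{4(m-1)}{m-2}\Delta u + \scal(\ginit)u = \scal(g(t))u^N$ to the limit distributionally, using only the uniform first-order control of Theorem \ref{u-hoelder}, and then note that the limiting identity itself exhibits $\Delta u_\infty$ as an element of $\ho$; since the paper defines $\hho$ precisely by requiring the distributional derivatives $\Delta u$ and $x^{-1}\mathcal{V}_e u$ to lie in $\ho$ (rather than controlling the full Hessian), membership $u_\infty\in\hho(M)$ follows essentially by definition, with no further elliptic regularity needed. Your route is shorter and more elementary, avoiding the delicate uniform-in-time parametrix estimates, and it also makes explicit a point the paper leaves implicit: that $\rho(t)$ actually converges to a finite $\rho_\infty<0$ (via integrability of $\partial_t\rho$ from \eqref{scal-rho-t}), which is what justifies calling the limit curvature ``constant.'' What the paper's heavier approach buys in exchange is stronger byproduct information, namely uniform $\hho$ control of $u(t)$ for all time and convergence of the conformal factor in the second-order H\"older topology $\mathcal{C}^{2+\A'}_{\textup{ie}}$, rather than only identification of a sufficiently regular limit -- a distinction your final paragraph correctly anticipates.
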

\begin{proof}
Consider the associated metric $g(t) = u^{\frac{4}{m-2}} (t) \ginit$.  This solves the normalized Yamabe flow, 
and as $g$ exists for all time we have by Proposition \ref{prop:scalar-curv} that
\[ \scal(g(t)) - \rho(t) \rightarrow 0 \]
at an exponential rate.  Consequently, by the Yamabe flow equation
\[ \partial_t g(t) \rightarrow 0 \]
exponentially. We conclude that $g$ converges to a 
continuous limit metric $g^* = (u^*)^{\frac{4}{m-2}} \ginit$, and the conformal 
factor $u(t)$ admits a continuous pointwise limit $u^*$ as $t\to \infty$. 
We prove the statement by proving that $u^* \in \mathcal{C}^{2+\A}_{\textup{ie}}(M)$.
Then the limit metric $g^*$ admits a well-defined scalar curvature, which can be 
shown to be constant. \medskip

Theorem \ref{u-hoelder} yields uniform bound of $u(t)$ and hence also of
$u(t)^{-1}$ in $\mathcal{C}^{1+\A}_{\textup{ie}}(M)$
for $t\in \R^+$. To get even further uniform regularity improvement, 
we consider the Yamabe flow equation \eqref{flow-3} 
rewritten as
\[ \partial_t u - \frac{(m-1)}{N} u^{1-N} (\Delta-1) u = \frac{c(m)}{N} 
\left( \rho \, u - \scal(\ginit) u^{2-N}\right) + \frac{(m-1)}{N} u^{2-N}. \]
Now, in a similar way as in the proof of Theorem \ref{u-hoelder}, 
we conclude that the parametrix $Q$ of the parabolic operator 
on the left hand side with uniform $\mathcal{C}^{1+\A}_{\textup{ie}}(M \times \R^+)$
coefficient is pointwise converging exponentially to zero as $t\to \infty$, 
since the heat operator of $(\Delta-1)$ behaves the same way as well.
Consequently, $u(t) \in \hho$ uniformly in $t\in \R^+$. \medskip

Choose now any divergent sequence $t_n \to \infty$.  
Uniform $\mathcal{C}^{2+\A}_{\textup{ie}}(M)$ bounds on the sequence $\{u(t_n)\}$
imply existence of a convergent subsequence in the Banach space $\mathcal{C}^{2+\A'}_{\textup{ie}}(M)$, for some $\A' < \A$.
Consequently the pointwise limit $u^*$ lies in $\mathcal{C}^{2+\A'}_{\textup{ie}}(M)$ and the 
limit metric $g^*$ admits a well-defined scalar curvature.  By 
$\scal(t)-\rho(t)$ vanishing in the limit $t\to \infty$, we conclude that the scalar curvature of the limit metric 
must be constant and negative by Lemma \ref{decrease}.
\end{proof}

\section{Metrics with negative Yamabe invariant}\label{negative-section}

In this section we extend a classical result of conformal geometry to our singular setting.  Consider a feasible incomplete edge metric $g$.  Recall we denote the conformal Laplacian of $g$ by 
\[ \Box^g u := - \frac{4(m-1)}{(m-2)} \, \Delta^g u+  \scal (g) u.\]  The associated Yamabe invariant, introduced in 
\eqref{Yamabe-invariant-definition} is given by
\begin{align}\label{YI}
\nu([g]) &= \inf \left\{ \cS(\widetilde{g}) \mid \widetilde{g} = u^{\frac{4}{m-2}} g, u \in 
\mathcal{C}^{2+\A}_{\textup{ie}}(M) \right\} = \inf_{u \in \hho(M)} \frac{\langle \Box^g u, u\rangle_{L^2}}{\| u \|^2_{L^{\frac{2m}{m-2}}}}.
\end{align}

We say that a conformal class $[g]$ is scalar positive, negative or zero if $\nu([g])$ is positive, negative or zero, respectively.  We restate and prove Theorem \ref{negative}.

\begin{thm}
Let $g$ be a feasible incomplete edge metric with conformal Laplacian $\Box^g$ and $\scal(g) \in \hhh(M)$.  The following are equivalent.
\begin{enumerate}
\item The first eigenvalue of $\Box^g$ is scalar positive (respectively negative or zero).
\item There exists a metric $\widetilde{g} = u^{\frac{4}{m-2}} g$ such that $\scal(\widetilde{g}) > 0$\\
 (respectively $<0$ or $=0$).
\item $[g]$ is positive (respectively negative or zero).
\end{enumerate}
\end{thm}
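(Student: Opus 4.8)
The plan is to run the cyclic chain $(1)\Rightarrow(2)\Rightarrow(3)\Rightarrow(1)$, arranged so that each step transports the sign $\sigma\in\{+,-,0\}$ unchanged; the engine throughout is the conformal transformation law \eqref{transformation}, which for a positive $\varphi\in\hho$ and $\widetilde g=\varphi^{\frac{4}{m-2}}g$ reads $\scal(\widetilde g)=\varphi^{-\frac{m+2}{m-2}}\Box^g\varphi$. First I would set up the spectral theory of $\Box^g$. Since $\scal(g)\in\hhh$ is bounded, $\Box^g$ is a bounded perturbation of $-\frac{4(m-1)}{m-2}\Delta$, so its Friedrichs extension is self-adjoint, bounded below, and inherits a discrete spectrum accumulating at $+\infty$ from the discreteness of $\Delta$ (a consequence of Theorem \ref{heat-expansion}). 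Let $\lambda_1$ be its lowest eigenvalue with eigenfunction $\varphi_1$. Two properties of $\varphi_1$ are needed, which I would establish first: (a) $\varphi_1\in\hho$, obtained by a regularity bootstrap from the eigenvalue equation $\Delta\varphi_1=\frac{m-2}{4(m-1)}(\scal(g)-\lambda_1)\varphi_1$, using ultracontractivity of $e^{-t\Box^g}$ to place $\varphi_1\in\mathscr{B}$ and then the (static) mapping properties behind Proposition \ref{mapping2} and Theorem \ref{mapping} to upgrade $\varphi_1\in\mathcal{C}^{1+\A}_{\textup{ie}}\subset\ho$ and finally $\varphi_1\in\hho$, exactly as in the bootstrap for the conformal factor in Section \ref{long-section}; and (b) $\varphi_1>0$, bounded away from zero on all of $\widetilde M$.

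For (b) I would argue that the Friedrichs heat kernel is strictly positive on the connected space $M$, so the semigroup $e^{-t\Box^g}$ is positivity-improving and Krein--Rutman forces the normalized $\varphi_1$ to be strictly positive in the interior; continuity of $\hho$ functions up to $\partial M$ then gives $\varphi_1\geq0$ on the edge, and strict positivity there is established using the edge maximum principle of Theorem \ref{maxprin} together with the fact that $\varphi_1$ restricts to a function $\varphi_{1,0}(y)$ of the base variables alone at $x=0$. I expect this positivity up to the edge, together with the $\hho$-regularity of $\varphi_1$, to be the main technical obstacle, since it is precisely here that the singular structure intervenes and the classical Hopf lemma is unavailable.

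With $\varphi_1$ in hand, $(1)\Rightarrow(2)$ is immediate: the invariance lemma of Section \ref{invariance} guarantees that $\widetilde g=\varphi_1^{\frac{4}{m-2}}g$ is again an incomplete edge metric, and the transformation law gives $\scal(\widetilde g)=\lambda_1\varphi_1^{-\frac{4}{m-2}}$, whose sign is everywhere that of $\lambda_1$; since $\varphi_1$ is bounded away from zero this scalar curvature is also bounded. For $(2)\Rightarrow(3)$ I would first note that $\nu$ is conformally invariant within the class (the factor $\varphi_1\in\mathcal{C}^{1+\A}_{\textup{ie}}$ is admissible in the H\"older-space invariance proposition of Section \ref{edge-section}, so the competitor set $\hho$ is unchanged), and then work with the representative $\widetilde g$ of prescribed scalar sign. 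If $\scal(\widetilde g)<0$, testing $\nu$ with $u\equiv1$ gives $\nu\leq\cS(\widetilde g)<0$; if $\scal(\widetilde g)=0$ the same test gives $\nu\leq0$, while $\langle\Box^{\widetilde g}u,u\rangle=\frac{4(m-1)}{m-2}\|\nabla u\|_{L^2}^2\geq0$ forces $\nu\geq0$, so $\nu=0$.

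The one place requiring a genuine analytic input is the positive case of $(2)\Rightarrow(3)$: if $\scal(\widetilde g)>0$, hence bounded below by some $c_0>0$, I would combine the coercive bound $\langle\Box^{\widetilde g}u,u\rangle\geq\frac{4(m-1)}{m-2}\|\nabla u\|_{L^2}^2+c_0\|u\|_{L^2}^2\geq c_1\|u\|_{H^{1,2}}^2$ with the Sobolev embedding $H^{1,2}(M,\widetilde g)\hookrightarrow L^{\frac{2m}{m-2}}(M,\widetilde g)$, valid on compact edge spaces (see \cite{ACM}), to conclude $\nu>0$. Finally $(3)\Rightarrow(1)$: comparing the Yamabe quotient with the Rayleigh quotient for $\Box^g$, which share the numerator $\langle\Box^g u,u\rangle$ and are both unchanged under $u\mapsto|u|$, the case $\nu<0$ produces a test function with negative quadratic form and hence $\lambda_1<0$; the remaining cases $\nu=0$ and $\nu>0$ give $\lambda_1\geq0$ directly and then exclude the wrong strict sign by invoking the already-proved implications $\lambda_1>0\Rightarrow\nu>0$ and $\lambda_1=0\Rightarrow\nu=0$ from the chain, closing the loop.
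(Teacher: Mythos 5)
Your overall architecture coincides with the paper's proof essentially step for step: the Rayleigh-quotient characterization of $\lambda_1$ for the Friedrichs extension, the regularity bootstrap for the ground state via $\varphi_1 = e^{-\lambda_1}e^{-\Box}\varphi_1$ and the mapping properties of Theorem \ref{mapping} and Proposition \ref{mapping2}, the chain $(1)\Rightarrow(2)\Rightarrow(3)\Rightarrow(1)$ driven by the conformal transformation law, the test function $u\equiv 1$ in the negative and zero cases of $(2)\Rightarrow(3)$, coercivity plus the Sobolev embedding in the positive case, and the comparison of Yamabe and Rayleigh quotients in $(3)\Rightarrow(1)$. The one place where you depart from the paper is also the one place where your argument has a genuine gap: strict positivity of the ground state $\varphi_1$ up to the edge.

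You claim that positivity at $x=0$ ``is established using the edge maximum principle of Theorem \ref{maxprin}.'' But Theorem \ref{maxprin} is a \emph{weak} maximum principle: it asserts only that $(\Delta u)(p)\geq 0$ at a minimum point $p$. Suppose $\varphi_1\geq 0$ vanishes at an edge point $p$; then $p$ is a global minimum, and the eigenvalue equation $-\tfrac{4(m-1)}{m-2}\Delta \varphi_1 + \scal(g)\,\varphi_1 = \lambda_1 \varphi_1$ evaluated at $p$ forces $(\Delta\varphi_1)(p)=0$, which is perfectly consistent with the conclusion $(\Delta\varphi_1)(p)\geq 0$ of Theorem \ref{maxprin}. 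No contradiction arises, so this tool cannot exclude a boundary zero. What is needed is a strong maximum principle, a Hopf-type boundary point lemma, or a Harnack inequality at the edge --- precisely the ingredient you correctly identify as unavailable, and the observation that $\varphi_1|_{x=0}$ depends only on the base variable $y$ does not substitute for it. The paper does not attempt this route: it proves only that $\phi_1$ cannot change sign (via the fact that $|\phi_1|$ is again a Rayleigh minimizer, hence an eigenfunction, smooth in the interior by elliptic regularity) and then cites \cite[Proposition 1.15]{ACM} for the strictly positive lower bound. This positivity, bounded away from zero, is not a technical nicety: it is exactly what makes $\widetilde{g} = \varphi_1^{\frac{4}{m-2}} g$ a nondegenerate edge metric with $\scal(\widetilde{g}) = \lambda_1 \varphi_1^{-\frac{4}{m-2}}$ in your step $(1)\Rightarrow(2)$, so the gap propagates into the main chain. (Your interior argument via positivity-improving semigroups and Krein--Rutman is fine in spirit, but it too presupposes strict positivity of the Friedrichs heat kernel on the singular space, which you assert rather than prove; the paper's route through $|\phi_1|$ sidesteps this as well.)
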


\begin{proof} We begin with a description of the first eigenvalue of 
$\Box^g$ in this setting.  We then prove the equivalence of the three statements. 
\medskip

We fix a domain for $\Box^g$ by $\dom(\Box^g) := \dom(\Delta)$, where $\Delta$ denotes the 
Friedrichs self-adjoint extension of the negative Laplace Beltrami operator in $L^2(M,g)$.
Since by assumption $\scal (g)$ is bounded, $\dom(\Box^g)$ defines a self-adjoint extension 
for the conformal Laplacian $\Box^g$. We still write $\Box^g$ for its self-adjoint extension. Note that
$(-\Delta)$ is positive and hence $\Box^g$ is bounded from below. Let $\{ \lambda_j \}_{j\in \N}$ be 
an ascending enumeration of eigenvalues of $\Box^g$. By a classical result for self-adjoint operators in a Hilbert space that are bounded from below with discrete spectrum, the first eigenvalue $\lambda_1$ 
admits a characterization as a Rayleigh quotient (cf. \cite[Theorem XIII.1]{Reed})
\begin{equation}\label{Rayleigh}
\begin{split}
\lambda_1 = \inf_{u \in \dom(\Box^g)} \frac{\langle \Box^g u, u\rangle_{L^2}}{\| u \|^2_{L^2}}.
\end{split}
\end{equation}
The infimum is attained by the corresponding eigenfunction $\phi_1$.
We now prove that $\phi_1 \in \mathcal{C}^{2k+\A}_{\textup{ie}}(M)$ for any $k\in \N$.
Under the assumption $\scal(g)\in \hhh$ the heat kernel construction for $\Box^g$ follows along the lines of the heat kernel
construction for $\Delta$ in \cite{MazVer:ATO}. In particular, the microlocal heat kernel description in Theorem 
\ref{heat-expansion} holds for the conformal Laplacian $\Box^g$ as well. We begin with the following observation
\begin{align*}
\left|\phi_1(p)\right|^2 &= e^{-2\lambda_1} |e^{-\Box}\phi_1(p)|^2 \\ &=
e^{-2\lambda_1} \left| \int_M e^{-\Box}(p, q) \phi_1(q) \textup{dvol}_{g}(q)\right|^2
\\ &\leq e^{-2\lambda_1} \int_M (e^{-\Box}(p, q))^2 \textup{dvol}_{g}(q) 
\int_M (\phi_1(q))^2 \textup{dvol}_{g}(q)
\end{align*}
Note that the heat kernel $e^{-t\Box}$ appears below with $t=1$ and hence a straightforward 
estimate shows that $\phi_1$ is bounded up to the edge singularity. Propositions \ref{mapping}
and \ref{mapping2} now yield that $\phi_1 = e^{-\lambda_1}e^{-\Box}\phi_1 \in 
\mathcal{C}^{2+\A}_{\textup{ie}}(M)$. Using the relation $\phi_1 = e^{-\lambda_1}e^{-\Box}\phi_1$
iteratively, proves the regularity statement $\phi_1 \in \mathcal{C}^{2k+\A}_{\textup{ie}}(M)$ for any $k\in \N$.
\medskip

We now prove that $\phi_1$ can be assumed to be nowhere vanishing in the open interior of $M$ and positive.
We proceed in several steps. We first show that $\phi_1$ cannot change sign and hence can be assumed
to be non-negative on $M$. We then prove that $\phi_1$ must be strictly positive as an application of the 
maximum principle in Theorem \ref{maxprin}. \medskip

Let us assume that $\phi_1$ changes sign in $M$ and hence its absolute value $|\phi_1|$ is discontinuous.
On the other hand, $|\phi_1|$ still minimizes the right hand side of \eqref{Rayleigh} and it is straightforward that it is again an 
eigenfunction of $\Box$. Since eigenfunctions of $\Box$ are smooth in the open interior $M$ by elliptic
regularity, we conclude that $\phi_1$ does not change sign in the open interior of $M$ and hence 
we may assume without loss of generality that $\phi_1$ is non-negative on $M$. Excluding zeros of
$\phi_1$ is in fact more intricate and we refer to \cite[Proposition 1.15]{ACM}, where this is proved.
\medskip

With these preliminaries aside, we can begin the main part of the proof which we believe is classical \cite{GR-notes}.
\medskip

\textbf{(1. $\Longrightarrow$ 2.)}  Let $\lambda_1$ denote the first eigenvalue of $\Box^g$ as above with corresponding eigenfunction $\phi_1$.  Since $\phi_1 > 0$ we consider the scalar curvature of the conformally related metric $\widetilde{g} = \phi_1^{\frac{4}{m-2}} g$:
\eqref{transformation}
\begin{align*}
\scal(\widetilde{g}) = \phi_1^{-\frac{m+2}{m-2}}\left( - \frac{4(m-1)}{(m-2)} \Delta \phi_1 +  
\scal(g) \phi_1 \right) = \lambda_1 \phi_1^{-\frac{4}{m-2}}.
\end{align*}
We conclude that the sign of the scalar curvature of $\widetilde{g}$ matches the sign of $\lambda_1$.
Note that $\widetilde{g}$ is a feasible edge metric of of regularity $\mathcal{C}^{4+\A}_{\textup{ie}}(M)$.
The fact that $\widetilde{g}$ is again feasible follows by the arguments of \S \ref{invariance} after a change of
the (edge) boundary defining function $x$. Here we point out that $\phi_1$ is polyhomogeneous 
in the sense of Definition \ref{phg}, since $\phi_1 = e^{-\lambda_1}e^{-\Box}\phi_1$ and the heat kernel 
of $e^{-t\Box}$ is polyhomogeneous as well. Consequently, $\phi_1$ admits an asymptotic expansion 
at the edge in the strong sense with smooth coefficients.  Since $\phi_1 \in \mathcal{C}^{4+\A}_{\textup{ie}}(M)$, strictly bounded away from zero, the scalar curvature is of same regularity. \medskip

\textbf{(2. $\Longrightarrow$ 3.)} Consider the metric $\widetilde{g}$ of fixed sign.  We consider the cases separately.
If $\scal(\widetilde{g}) < 0$, then consider the test function $u \equiv 1$.  Clearly $\nu([g]) = \nu([\widetilde{g}]) < 0$.  In case $\scal(\widetilde{g}) = 0$, then the same test function shows that $\nu([g]) = \nu([\widetilde{g}]) \leq 0$.  However, in this case if $\nu([g]) < 0$, then for some admissible $u$ we have the contradiction that $\int_M |\nabla u|^2 \textup{dvol}_{g} < 0$, from which we conclude $\nu([g]) = 0$. Finally, if $\scal(\widetilde{g}) > 0$, then there exist constants $C, C' > 0$ where for any admissible $u$
\[ \langle \Box^g u, u\rangle_{L^2} \geq C \| u \|_{H^{1,2}(M)} \geq C' \|u\|_{L^{\frac{2m}{m-2}}}, \]
where we have used the Sobolev embedding $L^{\frac{2m}{m-2}}(M) \subset H^{1,2}(M)$.  Consequently $\nu([g]) \geq C' > 0$.
\medskip

\textbf{(3. $\Longrightarrow$ 1.)}
If $\nu([g]) > 0$, then let $\widetilde{g}$ be the conformal multiple corresponding to eigenfunction $\phi_1$ of the first eigenvalue $\lambda_1$ of $\Box^g$, normalized so that $\|\phi_1\|^2_{L^{\frac{2m}{m-2}}} = 1$.  But then by the characterization of the first eigenvalue, 
\[ \lambda_1 \| \phi_1 \|_{L^2}^2 = \inn{ \Box \phi_1}{\phi_1}_{L^2} \geq \nu([g]) > 0, \]
proves that $\lambda_1 > 0$. If $\nu([g]) < 0$, then for some admissible function $v > 0$, 
\[ \frac{\inn{ \Box v}{v}_{L^2}}{\| v \|^2_{L^{\frac{2m}{m-2}}}} < 0. \]
Thus $\inn{ \Box v}{v}_{L^2} < 0$, and so $\lambda_1 < 0$ by the Rayleigh characterization.
Finally if $\nu([g]) = 0$, then repeating the analysis of the first case using $\phi_1$ we may conclude that $\lambda_1 \geq 0$.  Arguing as above, $\lambda_1 > 0$ would entail that $\nu([g]) > 0$, and so we conclude $\lambda_1 = 0$.  This completes the proof.

\end{proof}

\def\cprime{$'$}
\providecommand{\bysame}{\leavevmode\hbox to3em{\hrulefill}\thinspace}
\providecommand{\MR}{\relax\ifhmode\unskip\space\fi MR }
\providecommand{\MRhref}[2]{%
  \href{http://www.ams.org/mathscinet-getitem?mr=#1}{#2}
}
\providecommand{\href}[2]{#2}
\end{document}